\newtheorem{thm}{Theorem}[section]
\newtheorem{prop}[thm]{Proposition}
\newtheorem{lem}[thm]{Lemma}
\newtheorem{cor}[thm]{Corollary}
\theoremstyle{definition}
\newtheorem{definition}[thm]{Definition}
\theoremstyle{remark}
\newtheorem{remark}[thm]{Remark}
\numberwithin{equation}{section}
\renewcommand{\phi}{\varphi}
\newcommand{\tl}[1]{\tilde{#1}} 
\newcommand{\ol}[1]{\overline{#1}} 
\newcommand{\cl}[1]{\mathcal{#1}} 
\newcommand{\comment}[1]{}
\newcommand{\todo}[1]{}
\title[Existence of MCF Singularities with Bounded Mean Curvature]{Existence of Mean Curvature Flow Singularities with Bounded Mean Curvature}
\author{Maxwell Stolarski}
\address{School of Mathematical and Statistical Sciences, Arizona State University}
\email{maxwell.stolarski@asu.edu}
\urladdr{https://math.asu.edu/node/4100}
\begin{document}

\begin{abstract}
In \cite{V94}, Vel{\'a}zquez constructed a countable collection of mean curvature flow solutions in $\mathbb{R}^N$ in every dimension $N \ge 8$.
Each of these solutions becomes singular in finite time at which time the second fundamental form blows up.
In contrast, we confirm here that, in every dimension $N \ge 8$, a nontrivial subset of these solutions has uniformly bounded mean curvature.
\end{abstract}

\maketitle

\section{Introduction}
A smooth family of embedded hypersurfaces $ \left \{ \Sigma^{N-1}(t) \subset \mathbb{R}^N \right \}_{t \in [0, T)}$ moves by mean curvature flow if
	$$\partial_t \mathbf F =  \mathbf H_{\Sigma(t)}	$$
where $\mathbf H_{\Sigma(t)}$ denotes the mean curvature vector of the hypersurface $\Sigma(t)$ and 
$\mathbf F ( \cdot, t ) : \Sigma \to \Sigma(t) \subset \mathbb{R}^N$ is a smooth family of embeddings.
Often, mean curvature flow solutions develop singularities in finite time $T < \infty$.
When the hypersurfaces are closed, \comment{Is there a generalization of this result to noncompact surfaces?? I thought I saw it but now I can't find it anywhere...} Huisken \cite{Huisken84}
showed that the second fundamental form $A_{\Sigma(t)}$ blows up at the singularity time $T< \infty$ in the sense that
	$$\limsup_{t \nearrow T} \max_{x \in \Sigma(t)}  |A_{\Sigma(t)} (x) |= \infty$$
Naturally, one might then ask if the mean curvature necessarily blows up at a finite-time singularity.
Indeed, \cite{Mantegazza11} poses this question as open problem 2.4.10 on page 42.
This problem may equivalently be referred to as the extension problem, which asks,
``if $|H_{\Sigma(t)}|$ remains uniformly bounded up to time $T$, is it always possible to smoothly extend the flow past time $T$?"
\cite{Cooper11}, \cite{LS16}, \cite{LeS10}, \cite{LeS11}, \cite{LeS11_2}, \cite{LW19}, and \cite{XYZ11}, among others, made progress on this question.
In this article, we show that in general the mean curvature need not blow up at a finite-time singularity.
\begin{thm} \label{mainThmAbridged}
	For any dimension $N \ge 8$, there exists a smooth, properly embedded
	mean curvature flow solution
	$\left \{ \Sigma^{N-1}(t) \subset \mathbb{R}^N \right \}_{t \in [0, T)}$ such that
		$$\limsup_{t \nearrow T} \sup_{x \in \Sigma(t)}  | A_{\Sigma(t)}(x) | = \infty \quad \text{ and } \quad \sup_{t \in [0, T)} \sup_{x \in \Sigma(t)} | H_{\Sigma(t)}(x) | < \infty.$$
\end{thm}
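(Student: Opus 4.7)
The plan is to take Vel\'azquez's countable family from \cite{V94} as the input and show directly that, for infinitely many members, the mean curvature stays bounded even though the second fundamental form blows up. Vel\'azquez's solutions are rotationally symmetric hypersurfaces in $\mathbb{R}^N$, $N\ge 8$, that develop a finite-time singularity at the origin by asymptotically collapsing onto a Simons-type minimal cone $C_{p,q}\subset\mathbb{R}^{p+q+2}$. Because $C_{p,q}$ is minimal, its mean curvature vanishes identically, so any contribution to $|H|$ on $\Sigma(t)$ must come from the perturbation off of $C_{p,q}$, not from the cone itself. This is the structural reason to expect bounded mean curvature, and the proof will be a quantitative version of this observation.

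The first step is to recall the matched asymptotic structure of the Vel\'azquez construction. The solutions are written as normal graphs $u(r,t)$ over a piece of $C_{p,q}$, with an \emph{inner region} near the tip described by a rescaled self-similar smoothing of the cone, and an \emph{outer region} where $u$ is a slowly decaying perturbation of the static cone. The family is indexed by an integer $k$ that controls the matching scale and thereby determines the blowup rate of $|A|_{\Sigma(t)}$. The next step is to insert these expansions into the expressions for $H$ and $|A|$. For the second fundamental form, the cone contribution scales like $1/r$ on the cone, and the smoothing keeps $|A|$ bounded on each slice but unbounded as $t\nearrow T$, reproducing Huisken's result. For the mean curvature, the cone contribution is identically zero, so $H$ is controlled by first and second derivatives of the perturbation $u$; the task is to show that, in both regions, this yields a bound uniform in $(x,t)$.

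The third step is to estimate $|H|$ separately in the inner and outer regions. In the inner region one rescales parabolically: the shrinking self-similar model is a smooth complete hypersurface asymptotic to $C_{p,q}$ at infinity with zero mean curvature in the limit, so $|H|$ on the rescaled profile is bounded, and the unscaling factor for $H$ is favorable (one power of the scaling parameter, in contrast to the two powers that appear for $|A|$). In the outer region one uses that $u$ is a small, polynomially decaying perturbation of $C_{p,q}$, so the linearization of the mean curvature operator around $C_{p,q}$ controls $|H|$ by a weighted norm of $u$ that Vel\'azquez already bounds. The conclusion $\sup_{t,x}|H|<\infty$ should then follow for those indices $k$ in the Vel\'azquez family whose matching exponents make both pieces bounded; this distinguishes the "nontrivial subset" in the statement.

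The hardest part will be making the mean curvature estimate uniform across the \emph{intermediate region} where the inner and outer expansions are glued. Naive estimates on $u$ and its derivatives in the matching zone, while sufficient to control $u$ itself in the norms used by Vel\'azquez, can give divergent contributions to the quasilinear expression for $H$ unless one exploits the cancellation coming from the vanishing mean curvature of the cone at each order of the expansion. Quantifying this cancellation, and verifying that the error terms in the construction are small enough in $C^2$-type norms (not merely in the weighted $L^\infty$ norms used to set up the fixed-point argument) to preserve the bound on $|H|$, is where the main technical effort is expected.
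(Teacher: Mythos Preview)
Your high-level picture is right: Vel\'azquez's solutions are the input, the cone is minimal so $H$ must come from the perturbation, and the answer depends on which index $k$ you choose. But the core mechanism you propose for the inner region is incorrect. You write that ``the unscaling factor for $H$ is favorable (one power of the scaling parameter, in contrast to the two powers that appear for $|A|$).'' In fact $H$ and $|A|$ have the \emph{same} scaling dimension (inverse length), so under the blow-up $\tilde\Sigma = \Lambda\Sigma$ with $\Lambda = (T-t)^{-\sigma_k-1/2}$ both rescale identically. Knowing that the inner limit is the minimal surface $\bar\Sigma$ (hence $H_{\tilde\Sigma}\to 0$) only yields $|H_\Sigma| = o(\Lambda)$, which is far from bounded. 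This is not a technicality: for $k=2$ the mean curvature genuinely blows up (Guo--Sesum \cite{GS18}), so any argument that does not distinguish $k=2$ from larger $k$ at this step cannot be correct.

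The paper's actual argument in the inner and intermediate regions does not plug expansions into the formula for $H$. Instead it uses that $H$ itself solves the Jacobi heat equation $\partial_t H = \Delta_{\Sigma(t)} H + |A|^2 H$ and runs a blow-up contradiction argument on the weighted quantity $(1+\Lambda(t)|z|)^a |H|$ for suitable $a\in(|\alpha|,|\alpha|+1)$. If this quantity were unbounded, rescaling at the maximizing points would produce a nontrivial ancient solution of the linear Jacobi equation on either $\bar\Sigma$ or the Simons cone with decay $(1+|z|)^{-a}$; Liouville-type theorems for these operators (Sections~\ref{sectJacobiOperator}--\ref{sectLiouville-TypeThms}) force such solutions to vanish. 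The restriction on $k$ enters only when the blow-up point sits at the parabolic scale $|z_i|\sim\sqrt{T-t_i}$, where one \emph{does} estimate $H$ directly from Vel\'azquez's bounds and needs $\lambda_k\bigl(1-\tfrac{a}{1+|\alpha|}\bigr)\ge\tfrac12$. Outside the parabolic region the method changes again: Ecker--Huisken interior estimates far out, and barriers plus interior parabolic estimates for the profile perturbation $v=Q-r$ in the overlap. Your proposed route of upgrading Vel\'azquez's weighted $L^\infty$ control to $C^2$ control on the perturbation throughout the matching zone would have to reproduce all of this, and the scaling error above shows it cannot succeed without an idea equivalent to the Liouville step.
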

Theorem \ref{mainThmFull} provides a more precise statement of theorem \ref{mainThmAbridged}.
Vel{\'a}zquez ~\cite{V94} constructed the mean curvature flow solutions referred to in theorem \ref{mainThmAbridged}.
These solutions possess an $O(n) \times O(n)$ symmetry that simplifies the associated analysis.
Informally, the solutions converge to the Simons cone at parabolic scales around the singularity,
and converge to a smooth minimal surface desingularizing the Simons cone at the scale associated to the blow-up rate of the second fundamental form.
\cite{GS18} previously investigated a proper subset of Vel{\'a}zquez's solutions and showed that in fact the mean curvature blows-up for this subset of solutions.

While the construction provided by ~\cite{V94} yields complete, non-compact mean curvature flow solutions,
the author expects that \textit{closed} mean curvature flow solutions with the same dynamics exist.
Subsection \ref{compactifying} outlines a proof of the construction of these compact mean curvature flow solutions.
~\cite{Stolarski19} rigorously constructed the analogous Ricci flow solutions on closed topologies.
Consequently, it is expected that there exist examples of closed mean curvature flow solutions satisfying the conclusion of theorem \ref{mainThmAbridged}.

The proof of theorem \ref{mainThmAbridged} is based on Liouville-type theorems and a blow-up argument.
We begin with an overview of $O(n) \times O(n)$-invariant hypersurfaces in $\mathbb{R}^{2n}$ and establish notation.
Section \ref{sectVelazquez'sResult} provides an overview of the results from \cite{V94} that we invoke.
Section \ref{sectJacobiOperator} analyzes the Jacobi operator $\Delta + | A|^2$ on a particular minimal surface that will form the basis of the Liouville-type theorems.
Section \ref{sectLiouville-TypeThms} contains the Liouville-type theorems for the Jacobi operator $\Delta+ |A|^2$ on this minimal surface and the Simons cone.
Blow-up arguments show the boundedness of mean curvature in the inner and parabolic regions in the following section.
Finally, we construct barriers to deduce that the mean curvature remains bounded throughout the rest of the hypersurface in section \ref{sectEstsOutsideParabRegion}.
The appendices include additional details on $O(n) \times O(n)$-invariant hypersurfaces and a list of constants for reference.
 
\subsection*{Aknowledgements}
I would like to thank Sigurd Angenent and Dan Knopf for bringing the expectations around the Vel{\'a}zquez mean curvature flow solutions to my attention.
I thank Richard Bamler for suggesting the approach of ``semilocal maximum principles" to prove theorem \ref{mainThmAbridged}.
I thank Lu Wang and Brett Kotschwar for helpful conversations.

\section{Parametrizations of $O(n) \times O(n)$-Invariant Hypersurfaces} \label{sectParams}
Consider $\mathbb{R}^{2n} = \mathbb{R}^{n} \times \mathbb{R}^n$ with points denoted by
	$$  z =  (\mathbf x, \mathbf y) \in \mathbb{R}^{n} \times \mathbb{R}^n 		\qquad \mathbf x = ( x^1, ... , x^n) 	\quad \mathbf y = ( y^1, ... , y^n)$$

Let $\Sigma^{2n-1} \subset \mathbb{R}^{2n}$ be a hypersurface.
If $\Sigma$ is $O(n) \times O(n)$-invariant, then $\Sigma$ is determined by its intersection with the first quadrant of the $x^1 y^1$-plane
	$$ \{ ( \mathbf x, \mathbf y) \in \mathbb{R}^{2n} : \quad  x^2 = ... = x^n = y^2 = ... = y^n = 0, \quad x^1 > 0,  \quad y^1 > 0\} $$
Assume in this plane that $\Sigma$ equals the graph of a function 
	$$y^1 = Q(x^1)$$
defined for all $x^1 > 0$.
$Q$ will be referred to as the \textit{profile function} of the hypersurface.
If an $O(n) \times O(n)$ invariant hypersurface $\Sigma$ has profile function $Q$, then $\Sigma$ may be parametrized by
	$$\mathbf F: \mathbb{R}^n \times \mathbb{S}^{n-1} \to \mathbb{R}^{2n}$$
	$$\mathbf F( \mathbf x , \mathbf \theta) = ( \mathbf x , Q( | \mathbf x |)  \mathbf \theta)$$

Write $r = | \mathbf x | \ge 0$ and $Q=Q(r)$.
We use $' = \frac{d}{dr}$ to denote the derivative with respect to $r$.
$\Sigma$ is smooth if $Q$ is smooth and additionally
	$$Q(0) > 0	\quad \text{ and } \quad Q^{(odd)}(0) = 0$$
where $Q^{(odd)}$ denotes any odd-order derivative of $Q$ with respect to $r$.

As computed in appendix \ref{appendixComputations},
the induced metric, second fundamental form, and mean curvature are given by
\begin{equation*}
	g_{\Sigma}
	= 
	\left( \begin{array}{ccc}
		 1 + Q'^2  	&	0	&	0	\\
		0					&	r^2 g_{\mathbb{S}^{n-1}}	&	0	\\
		0					&	0	&	Q'^{2} g_{\mathbb{S}^{n-1}}	\\
	\end{array} \right)
\end{equation*}

\begin{equation*}
	A_\Sigma
	= \frac{1}{\sqrt{1 + Q'^2}}
	\left( \begin{array}{ccc}
		Q''	&	0 	&	0 		\\
		0							&	Q' r g_{\mathbb{S}^{n-1}}	&	0 \\
		0	&	0&	-Q g_{\mathbb{S}^{n-1}}		\\
	\end{array} \right) 
\end{equation*}

$$H = g^{ij} A_{ij} = \frac{1}{ \sqrt{1 + Q'^2} } \left( \frac{   Q'' }{1 + Q'^2} + \frac{(n-1)}{r} Q' - \frac{ (n-1)}{Q} \right)$$
Moreover, the mean curvature flow of such $O(n) \times O(n)$-invariant hypersurfaces reduces to the following partial differential equation for the profile function $Q = Q(r,t)$
\begin{equation} \label{MCF1}
	\partial_t Q = \frac{Q''}{1 + Q'^2}  + \frac{(n-1)}{r} Q'- \frac{(n-1)}{Q}		\qquad \big( (r,t) \in (0, \infty) \times (0, T) \big)
\end{equation}
Regarded as a rotationally symmetric function $Q  = Q( | \mathbf{x} |, t)$ on $\mathbb{R}^n$, equation (\ref{MCF1}) above is equivalent to the following partial differential equation on $\mathbb{R}^n \times (0, T)$
\begin{equation} \label{MCF2}
	\partial_t Q =  \sqrt{ 1 + | \nabla Q|^2} \, div \left( \frac{ \nabla Q}{ \sqrt{ 1 + | \nabla Q|^2} } \right) - \frac{(n-1)}{Q}
	\qquad \big( (\mathbf x, t ) \in \mathbb{R}^n \times (0, T) \big)
\end{equation} 
Here, $div$ and $\nabla$ are taken on $n$-dimensional Euclidean space $\mathbb{E}^n$.
Note that equation (\ref{MCF2}) is equivalent to graphical mean curvature flow with an additional forcing term $-\frac{ (n-1)}{Q}$. 
In particular, if $|\nabla Q|$ is bounded then this equation is strictly parabolic.

We now note that certain distances are equivalent, which will permit us to use the distances interchangeably in later estimates.
\begin{lem} \label{equivDist}
	Let $\Sigma^{2n-1} \subset \mathbb{E}^{2n}$ be a smooth, connected $O(n) \times O(n)$-invariant hypersurface that intersects the plane $\{ \mathbf x = \mathbf 0 \} $.
	If $\Sigma_{\mathbf 0}$ denotes $\Sigma \cap \{ \mathbf x = \mathbf 0 \}$, then 
		$$\text{dist}_{\Sigma} ( ( \mathbf x, \mathbf y) , \Sigma_{\mathbf 0}) 
		\le C |\mathbf x | 
		\le  C dist_{\mathbb{E}^{2n}} ( ( \mathbf x, \mathbf y) , \Sigma_{\mathbf 0}) 
		\le C \text{dist}_{\Sigma} ( ( \mathbf x, \mathbf y) , \Sigma_{\mathbf 0}) $$
	where $C = \sqrt{ 1 + \| Q' \|_\infty^2 }$.
	
	Additionally,
	there exists a constant $C$ depending only on $Q(0)$ and $ \| Q' \|_\infty = \sup_x | Q'(x) |$ such that
	\begin{gather*}
		1 + \text{dist}_{\Sigma}( ( \mathbf x, \mathbf y) , ( \mathbf 0, \mathbf y_0) ) 
		\le C \left( 1 + | \mathbf x | \right) \\
		\le C \left( 1 + | ( \mathbf x, \mathbf y) | \right)  
		\le C^2 \left( 1 + \text{dist}_{\Sigma}( ( \mathbf x, \mathbf y) , ( \mathbf 0, \mathbf y_0) ) \right)
	\end{gather*}
	for all $(\mathbf x, \mathbf y) \in \Sigma$ and $( \mathbf 0, \mathbf y_0 ) \in \Sigma_{\mathbf 0}$.
\end{lem}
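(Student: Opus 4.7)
The plan is to prove the first chain of inequalities and then deduce the second chain from it together with the triangle inequality on $\Sigma$.

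For the first chain, the middle and rightmost inequalities are essentially immediate. The middle one, $|\mathbf{x}| \le \mathrm{dist}_{\mathbb{E}^{2n}}((\mathbf{x},\mathbf{y}),\Sigma_{\mathbf{0}})$, follows because every point of $\Sigma_{\mathbf{0}}$ has vanishing first factor, so the extrinsic distance is at least $|\mathbf{x}-\mathbf{0}|$. The rightmost one is the general fact that extrinsic distance is bounded by intrinsic distance. The real content is the leftmost inequality $\mathrm{dist}_\Sigma((\mathbf{x},\mathbf{y}),\Sigma_{\mathbf{0}}) \le C|\mathbf{x}|$: for this I would write $\mathbf{y} = Q(|\mathbf{x}|)\theta$ for some $\theta \in \mathbb{S}^{n-1}$ and consider the explicit curve
\[
	\gamma(s) = \bigl( (1-s)\mathbf{x},\, Q((1-s)|\mathbf{x}|)\, \theta \bigr), \qquad s \in [0,1],
\]
which lies on $\Sigma$ and connects $(\mathbf{x},\mathbf{y})$ to $(\mathbf{0},Q(0)\theta) \in \Sigma_{\mathbf{0}}$. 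A direct differentiation gives $|\gamma'(s)| = |\mathbf{x}|\sqrt{1 + Q'((1-s)|\mathbf{x}|)^2} \le C|\mathbf{x}|$, and integrating over $[0,1]$ produces the desired length estimate.

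For the second chain, the first inequality follows from the triangle inequality
\[
	\mathrm{dist}_\Sigma((\mathbf{x},\mathbf{y}),(\mathbf{0},\mathbf{y}_0)) \le \mathrm{dist}_\Sigma((\mathbf{x},\mathbf{y}),\Sigma_{\mathbf{0}}) + \mathrm{diam}_\Sigma(\Sigma_{\mathbf{0}})
\]
once I apply the first chain and use the fact that $\Sigma_{\mathbf{0}}$ is a round Euclidean sphere of radius $Q(0)$, so that its intrinsic diameter is bounded by $\pi Q(0)$; absorbing the resulting additive constant depending on $Q(0)$ into a multiplicative constant justifies the form of the bound. The middle inequality $1+|\mathbf{x}| \le 1 + |(\mathbf{x},\mathbf{y})|$ is trivial. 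For the last inequality I would bound the $\mathbf{y}$-component using $|\mathbf{y}| = Q(|\mathbf{x}|) \le Q(0) + \|Q'\|_\infty |\mathbf{x}|$, so that
\[
	1 + |(\mathbf{x},\mathbf{y})| \le 1 + |\mathbf{x}| + Q(0) + \|Q'\|_\infty |\mathbf{x}|,
\]
and then apply the already-established inequality $|\mathbf{x}| \le \mathrm{dist}_\Sigma((\mathbf{x},\mathbf{y}),(\mathbf{0},\mathbf{y}_0))$ (which itself follows from the chain of the first statement plus the fact that $\mathrm{dist}_\Sigma \ge \mathrm{dist}_{\mathbb{E}^{2n}} \ge |\mathbf{x}|$).

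There is no serious obstacle here; the only item requiring care is the explicit construction and length computation for the curve $\gamma$, since everything else reduces to the triangle inequality and standard comparisons between intrinsic and extrinsic distances. In particular, the lemma does not need any structural assumption beyond $Q$ being smooth with $Q(0) > 0$ and $\|Q'\|_\infty < \infty$, both of which are in force by hypothesis.
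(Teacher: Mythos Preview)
Your proposal is correct and follows essentially the same approach as the paper: the explicit radial curve $\gamma$ you construct (up to reparametrization) is exactly the one the paper uses for the first chain, and your triangle-inequality argument with $\mathrm{diam}_\Sigma(\Sigma_{\mathbf 0})\le \pi Q(0)$ matches the paper's for the second chain. The only cosmetic difference is in the final inequality of the second chain, where the paper bounds $|(\mathbf x,\mathbf y)|\le |(\mathbf x,\mathbf y)-(\mathbf 0,\mathbf y_0)|+Q(0)$ via the Euclidean triangle inequality rather than your mean-value-theorem bound on $|\mathbf y|$; both lead to the same conclusion.
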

\begin{proof}
	First, note that for any $( \mathbf x, \mathbf y ) \in \Sigma$ and any $( \mathbf 0, \mathbf y' ) \in \Sigma_{\mathbf 0}$
	\begin{gather*}
		| \mathbf x | 
		\le \sqrt{ |\mathbf x |^2 + | \mathbf y - \mathbf y'|^2 }
		=| ( \mathbf x, \mathbf y) - ( \mathbf 0, \mathbf y') |\\
		\implies
		|\mathbf x| 
		\le \min_{( \mathbf 0, \mathbf y' ) \in \Sigma_{\mathbf 0}} | ( \mathbf x, \mathbf y) - ( \mathbf 0, \mathbf y') |
		= dist_{\mathbb{E}^{2n}} (  (\mathbf x, \mathbf y) , \Sigma_{\mathbf 0} )
	\end{gather*} 

	Next, for an arbitrary $( \mathbf x, \mathbf y ) \in \Sigma$, write
		$$( \mathbf x, \mathbf y ) = ( | \mathbf x | \omega_1, Q( |\mathbf x | ) \omega_2 ) 		\qquad \left( \omega_1, \omega_2 \in \mathbb{S}^{n-1} \right) $$
	and consider the path $\gamma(t) \in \Sigma$ given by
		$$\gamma(t) = \left( t | \mathbf x | \omega_1, Q( t | \mathbf x | ) \omega_2  \right) 		\qquad (0 \le t \le 1)$$
	It follows that
	\begin{equation*} \begin{aligned}
		\text{dist}_{\Sigma} ( ( \mathbf x, \mathbf y) , \Sigma_{\mathbf 0} ) 
		&\le \text{dist}_{\Sigma} ( ( \mathbf x, \mathbf y) , ( \mathbf 0, Q(0) \omega_2 ) ) \\
		&\le 
		 \int_0^1 | \dot \gamma (t) | dt	\\
		&= \int_0^1 \sqrt{ | \mathbf x |^2 + | \mathbf x |^2 Q'(t |\mathbf x | )^2 } dt \\
		&\le \sqrt{ 1 + \| Q' \|_\infty^2 } \, | \mathbf x | \\
		&\le \sqrt{ 1 + \| Q' \|_\infty^2 } \, \text{dist}_{\mathbb{E}^{2n}}\left ( ( \mathbf x , \mathbf y) , \Sigma_{\mathbf 0}\right) \\
		&\le \sqrt{ 1 + \| Q' \|_\infty^2 } \, \text{dist}_{\Sigma} ( ( \mathbf x, \mathbf y) , \Sigma_{\mathbf 0} ) 
	\end{aligned} \end{equation*}
	This completes the proof of the first part of the statement.

	It now follows that for some constant $C$ depending on $Q(0)$ and $\| Q' \|_\infty$
	\begin{equation*} \begin{aligned}
		1 + \text{dist}_{\Sigma} ( ( \mathbf x, \mathbf y), ( \mathbf 0, \mathbf y_0) )		
		&\le 1 + \text{dist}_{\Sigma} (( \mathbf x, \mathbf y), \Sigma_{\mathbf 0} ) + \pi Q(0)	\\
		&\le C( 1 + | \mathbf x | ) 	\\
		&\le C ( 1 + | (\mathbf x, \mathbf y) | )	\\
		&\le C ( 1 + | ( \mathbf x, \mathbf y ) - ( \mathbf 0, \mathbf y_0) | + | ( \mathbf 0, \mathbf y_0) | ) \\
		&\le C ( 1 + | ( \mathbf x, \mathbf y ) - ( \mathbf 0, \mathbf y_0) | + | Q(0) | ) \\
		&\le C^2 ( 1 + | ( \mathbf x, \mathbf y ) - ( \mathbf 0, \mathbf y_0) | ) \\ 
		&\le C^2 ( 1 + \text{dist}_{\Sigma} ( ( \mathbf x, \mathbf y), ( \mathbf 0, \mathbf y_0) )	 )
	\end{aligned} \end{equation*} 
	for any $( \mathbf x, \mathbf y) \in \Sigma$ and $( \mathbf 0, \mathbf y_0 ) \in \Sigma_{\mathbf 0}$.
\end{proof}

\begin{remark}
	Throughout the remainder of the article, we will use the notation ``$A \lesssim B$" to mean ``there exists a constant $C$ such that $A \le C B$"
	and ``$A \sim B$" to mean ``$A \lesssim B \lesssim A$."
	Subscripts as in ``$\lesssim_{a,b}$" indicate that the constant $C$ depends on $a$ and $b$.
	
	For sequences $\{ A_i \}_{i \in \mathbb{N}}$ and $\{ B_i \}_{i \in \mathbb{N}}$ with $A_i , B_i \ge 0$ for all $i$,
	we say
		$$A_i \ll B_i	\quad	\text{ if }	\quad	\lim_{i \to \infty} \frac{A_i}{B_i} = 0.$$
\end{remark}

\subsection{Self-Similar Solutions} \label{selfSimilarSolutions}
For added context, we include the profile functions of some $O(n) \times O(n)$-invariant self-similar mean curvature flow solutions.

\subsubsection{Simons Cone $\mathcal{C}$} \label{SimonsCone}
The Simons cone \cite{Simons68}
	$$\cl{C} = \{ ( \mathbf x , \mathbf y) \in \mathbb{R}^{n} \times \mathbb{R}^n \, : \, | \mathbf x | = | \mathbf y | \} $$ 
is a stationary mean curvature flow solution with profile function $Q(r,t) = r$.

\subsubsection{A Minimal Surface $\ol{\Sigma}$ Desingularizing the Simons Cone} \label{VMinlSurf}
For any $n \ge 4$, there exists a smooth, $O(n) \times O(n)$-invariant minimal surface $\ol{\Sigma}^{2n-1} \subset \mathbb{R}^{2n}$ that is asymptotic to the Simons cone at infinity.
The construction of this minimal surface is outlined in \cite{V94} and can also be found in the work of Alencar (see for example \cite{Alencar93} which also considers lower dimensions).
By scaling, there exists a one-parameter family of these minimal surfaces.
We let $\ol{Q}_b(r)$ denote the profile function for the surface $\ol{\Sigma}$ with $\ol{Q}(0) = b$.
These profiles are related by
	$$\ol{Q}_b(r) = b \ol{Q}_1 \left(\frac{r}{b} \right)		\qquad (b > 0)$$ 
Note that $\ol{Q}(r)$ is not given explicitly but its asymptotics are known and summarized in remark \ref{asympsQ}.
In situations where the particular minimal surface in this one-parameter family is irrelevant,
we shall often omit the ``$b$" subscript and simply write $\ol{Q}(r)$.

Proposition 2.2 in \cite{V94} shows additionally that $\ol{Q}''(r) > 0$ for all $r \ge 0$.
This result has the following corollary:
\begin{cor}		\label{posKernelElt}
	The function $u_0 : \ol{\Sigma} \to \mathbb{R}$ defined by
		$$u_0( \mathbf x, \mathbf y) = \left( \mathbf x, \mathbf y \right) \cdot \nu_{\ol{\Sigma}}$$
	is positive $u_0 > 0$ on $\ol{\Sigma}$.
\end{cor}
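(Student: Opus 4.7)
The plan is to reduce the claim to a one-variable statement by computing $u_0$ explicitly in the $O(n) \times O(n)$-symmetric parametrization, then read off the sign from the strict convexity of Proposition 2.2 in \cite{V94} together with the asymptotic behavior of $\ol{Q}$ at infinity.

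First, I would parametrize $\ol{\Sigma}$ by $\mathbf{F}(\mathbf{x}, \theta) = (\mathbf{x}, \ol{Q}(r)\theta)$ with $r = |\mathbf{x}|$ and $\theta \in \mathbb{S}^{n-1}$. Imposing the $O(n) \times O(n)$-symmetry, the unit normal must take the form
\[
\nu_{\ol{\Sigma}} = \frac{1}{\sqrt{1 + \ol{Q}'(r)^2}} \left( - \ol{Q}'(r) \frac{\mathbf{x}}{r}, \, \theta \right)
\]
up to a global sign, and pairing with the position vector yields
\[
u_0(\mathbf{x}, \mathbf{y}) = \frac{\ol{Q}(r) - r \ol{Q}'(r)}{\sqrt{1 + \ol{Q}'(r)^2}}.
\]
So it suffices to prove that $f(r) := \ol{Q}(r) - r \ol{Q}'(r)$ is strictly positive for all $r \ge 0$.

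The boundary behavior of $f$ is easy: smoothness of $\ol{\Sigma}$ at the core forces $\ol{Q}'(0) = 0$, so $f(0) = b > 0$, while $f'(r) = -r \ol{Q}''(r)$ is strictly negative for $r > 0$ by the convexity $\ol{Q}'' > 0$ recorded just above. Thus $f$ is strictly decreasing on $[0, \infty)$ and the only remaining issue is its limit at infinity. For this I would invoke the asymptotic expansion of $\ol{Q}$ from remark \ref{asympsQ}: since $\ol{\Sigma}$ is asymptotic to the Simons cone, $\ol{Q}'(r) \nearrow 1$ and $\ol{Q}(r) - r \searrow 0$ as $r \to \infty$. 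The second statement gives $\ol{Q}(r) > r$, and the first gives $r \ol{Q}'(r) < r$ for $r > 0$, so $f(r) > \ol{Q}(r) - r > 0$; combined with $f(0) = b > 0$ this finishes the argument.

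The one nontrivial point to pin down is the sign in $\ol{Q}(r) - r \to 0^+$ (as opposed to $0^-$), i.e., that $\ol{\Sigma}$ approaches the cone from the side containing its core rather than the opposite side. This is a genuine asymptotic input beyond strict convexity alone, and must be read off from the expansion in remark \ref{asympsQ}; once that sign is fixed the rest of the argument is immediate.
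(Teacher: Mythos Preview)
Your proof is correct and follows essentially the same route as the paper: compute $u_0$ in the symmetric parametrization, reduce to showing $f(r)=\ol{Q}(r)-r\ol{Q}'(r)>0$, and use $f'(r)=-r\ol{Q}''(r)<0$. The only difference is in the final step: the paper simply observes that $f$ is strictly decreasing with $\lim_{r\to\infty}f(r)=0$ (from the asymptotics of $\ol{Q}$), hence $f>0$; this avoids your last paragraph's concern about the sign of $\ol{Q}(r)-r$, since the monotonicity-plus-limit argument does not require knowing from which side $\ol{\Sigma}$ approaches the cone.
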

\begin{proof}
	By computations contained in appendix \ref{appendixComputations},
	\begin{gather*}
		u_0
		=  \langle \mathbf x, \ol{Q}( | \mathbf x |) \theta \rangle \cdot \frac{\left  \langle - \ol{Q}' \frac{ \mathbf x }{ | \mathbf x |} ,  \theta \right \rangle}{ \sqrt{ 1 + \ol{Q}'^2 }} 	
		= \left( 1 + \ol{Q}'^2 \right)^{-1/2} \left( \ol{Q} - | \mathbf x | \ol{Q}' \right)
	\end{gather*}
	In particular, $u_0 = u_0( | \mathbf x |)$ is a function of $| \mathbf x|$.
	It now suffices to show that $\ol{Q}(r) - r \ol{Q}'(r) > 0$ is positive.
	Differentiating with respect to $r$ implies
		$$\frac{d}{dr} \left( \ol{Q}(r) - r \ol{Q}'(r) \right) = - r \ol{Q}''(r) < 0		\quad \text{ for all $r > 0$,}$$
	Therefore $\ol{Q} - r \ol{Q}'$ is a decreasing function of $r$, and, moreover,
		$$\lim_{r \to + \infty} \ol{Q} (r) - r \ol{Q}'(r) = 0.$$
	because $\ol{\Sigma}$ is asymptotic to the Simons cone at $r = \infty$.
	It follows that $u_0 > 0$ for all $r \ge 0$. 
\end{proof}

\subsubsection{Shrinking Cylinder}	\label{shrinkingCylinder}
The spatially constant profile function
	$$Q(r,t) = \sqrt{ 2(n-1) (T- t) }		\qquad (r,t) \in [0, \infty) \times ( - \infty, T)$$
corresponds to a self-similarly shrinking cylinder $\mathbb{R}^{n} \times S^{n-1}$.

\subsubsection{Shrinking Sphere} \label{shrinkingSphere}
The self-similarly shrinking spheres $S^{2n-1}$ centered at the origin have profile functions 
	$$Q(r,t) = \sqrt{ 2(2n-1)(T-t) - r^2}		\qquad 0 \le r \le \sqrt{ 2(2n-1)(T-t)}$$
	
\begin{remark}
	\cite{DLN17} provides a systematic overview of closed $O(n) \times O(n)$-invariant self-shrinkers for the mean curvature flow. 
\end{remark}

\section{Vel{\'a}zquez's Result} \label{sectVelazquez'sResult}
In ~\cite{V94},
Vel{\'a}zquez proves the following result:

\begin{thm} \label{Velazquez'sResult}
(theorems 2.1 and 2.2 of \cite{V94})
 \comment{switched the $l$ in Velazquez to $k$ here}
	Let $n \ge 4$ and $k \in \mathbb{N}$ such that
		$$\lambda_k \doteqdot \frac{\alpha - 1}{2} + k > 0		\qquad \text{where } \alpha \doteqdot \frac{ -(2n-3) }{2} + \frac{1}{2} \sqrt{ (2n-1)^2 - 16(n - 1) } < 0$$
	For $T > 0$ sufficiently small, there exists a family of $O(n) \times O(n)$-invariant hypersurfaces 
	$\{ \Sigma_k^{2n-1}(t) \}_{t \in [0, T)}$		
	in $\mathbb{R}^{2n}$ which move by mean curvature flow and are such that
	\begin{enumerate}
		\item the surface's intersection with the $(x^1, y^1)$-plane,
		in the region where $|x^1| \lesssim \sqrt{T-t}$, 
		is given by the graph of a convex profile function $Q$,
		\item the parabolically rescaled hypersurfaces $(T-t)^{-\frac{1}{2}} \Sigma(t)$ converge in $C^2_{loc}$ away from the origin to the Simons cone $\mathcal{C}$,  
		\item for the constant
			$$\sigma_k \doteqdot \frac{\lambda_k}{1 + | \alpha | } > 0,$$
		the rescaled hypersurfaces $(T-t)^{- \sigma_k - \frac{1}{2} } \Sigma(t)$ converge uniformly on compact sets to one of the minimal hypersurfaces $\ol{\Sigma}$ as $t \nearrow T$, and
		\item the second fundamental form $A_{\Sigma(t)}$ blows up at a rate comparable to \\ $(T-t)^{-\sigma_k -\frac{1}{2} }$, that is
			$$\| A_{\Sigma(t)} \|_{L^\infty( \Sigma(t))} \sim (T-t)^{-\sigma_k -\frac{1}{2} }$$
	\end{enumerate}
\end{thm}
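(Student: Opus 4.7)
The plan is to combine matched asymptotic expansions with a topological shooting argument, following the scheme of \cite{V94}. First, I pass to the parabolic similarity variables $y = r/\sqrt{T-t}$, $\tau = -\log(T-t)$, setting $Q(r,t) = \sqrt{T-t}\,\Phi(y,\tau)$. Equation (\ref{MCF1}) becomes an autonomous PDE in $(y,\tau)$ for which the Simons cone profile $\Phi \equiv y$ is stationary. Writing $\Phi = y + \psi$ and separating variables $\psi = e^{-\mu\tau}v(y)$ reduces the linearization to a weighted Sturm-Liouville ODE with two indicial exponents at the origin; requiring regularity of the associated hypersurface at $y=0$ picks out the branch $v \sim y^{\alpha}$, while the Gaussian-type weight fixes the growth at infinity. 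The discrete spectrum so obtained is exactly $\{\mu = \lambda_k\}_{k \in \mathbb{N}}$, and the modes with $\lambda_k > 0$ are unstable directions for the rescaled flow.

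Next, I build a formal approximate solution whose leading correction to the Simons cone is the $k$th unstable mode: in the outer region take $\Phi(y,\tau) \approx y + c\,e^{-\lambda_k \tau}v_k(y)$ with $c > 0$ a free parameter, iterating to higher order to cancel the generated nonlinear residues. Because $v_k(y) \sim y^{\alpha}$ is singular at the apex, a separate inner expansion is needed. I zoom in at a scale $\epsilon(t)$ via $r = \epsilon\rho$, $Q = \epsilon P(\rho,\tau)$; requiring that the leading balance for $P$ be the stationary minimal-surface equation forces $\dot\epsilon\,\epsilon \to 0$, and matching the inner asymptotics $P \to \ol{Q}_b$ (with $\ol{Q}_b(\rho) \sim \rho$ at infinity) to the outer behavior $v_k(y) \sim y^{\alpha}$ in the annulus $\epsilon \ll r \ll \sqrt{T-t}$ forces simultaneously $\epsilon(t) \sim (T-t)^{\sigma_k + \frac{1}{2}}$ with $\sigma_k = \lambda_k/(1+|\alpha|)$. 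This formal picture accounts for the convergences claimed in conclusions (2) and (3), the blow-up rate in (4) via $|A| \sim \epsilon(t)^{-1}$, and convexity in (1) because the inner profile $\ol{Q}_b$ is strictly convex (Proposition 2.2 of \cite{V94}) while the outer profile approaches the linear cone $y$.

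The remaining and hardest step is to promote the formal solution to a genuine MCF solution. I parametrize initial data by finitely many coefficients, one for each unstable eigenmode $v_j$ with $\lambda_j > 0$, decompose $\Phi = \Phi_{\mathrm{app}} + \eta$ where $\Phi_{\mathrm{app}}$ is the approximate solution tuned to excite only mode $k$, and control $\eta$ in weighted norms that interpolate between the inner minimal-surface region and the outer Simons-cone region. The linearized evolution is contracting on the stable subspace; on the finite-dimensional unstable subspace, a Brouwer degree / topological shooting argument selects initial parameters so that the trajectory tracks $\Phi_{\mathrm{app}}$ up to $\tau = \infty$, equivalently $t = T$. The main obstacle is designing function spaces and weighted estimates that simultaneously (i) absorb the nonlinear errors from a formal expansion pushed to high enough order, (ii) respect the matching between the inner and outer regimes without losing powers of the small parameter $\epsilon$, and (iii) admit the topological nondegeneracy needed for the shooting argument to isolate the $k$th mode without also exciting modes $j > k$. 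This matched-asymptotics-plus-shooting structure is the technical heart of \cite{V94}, and any proof of Theorem \ref{Velazquez'sResult} must contend with the delicate nonlinear estimates it requires.
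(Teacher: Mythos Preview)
The paper does not supply its own proof of this theorem; it is stated as a quotation of theorems 2.1 and 2.2 of \cite{V94}, and the subsequent results (Theorems \ref{innerRegSmoothCgce}, \ref{rescaleConeCgce}, Corollary \ref{rescaleMinlSurfCgce}) refine the convergence statements rather than re-derive existence. So there is nothing in the paper to compare your proposal against beyond its brief description of Vel\'azquez's method in subsection \ref{compactifying}, where it notes that ``Vel{\'a}zquez's existence result follows from a topological degree argument applied to a suitably defined disks' worth of initial hypersurfaces.''

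That said, your sketch is a faithful high-level account of the strategy in \cite{V94}: the passage to similarity variables, the spectral analysis yielding the eigenvalues $\lambda_k$, the inner/outer matched asymptotics producing the scale $\epsilon(t) \sim (T-t)^{\sigma_k + 1/2}$ with $\sigma_k = \lambda_k/(1+|\alpha|)$, and the closure via a topological (Brouwer-degree) shooting argument on the finitely many unstable modes. One small inaccuracy: you describe the modes with $\lambda_k > 0$ as ``unstable directions for the rescaled flow,'' but in the self-similar variables these are precisely the \emph{decaying} (stable) modes, since the linear correction scales as $e^{-\lambda_k \tau}$; the shooting argument in \cite{V94} tunes the coefficients of the genuinely unstable modes $\lambda_j < \lambda_k$ (i.e., the lower eigenmodes that would otherwise dominate) so that only the desired $k$th-mode asymptotics survive. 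This is a labeling issue rather than a structural gap.
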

We will refer to the mean curvature flow solution $\left \{ \Sigma_k^{2n-1}(t) \right \}_{t \in [0, T)}$ as \textit{the Vel{\'a}zquez mean curvature flow solution of parameter $k$}.
The precise details of the convergence described in theorem \ref{Velazquez'sResult} above will be refined theorems \ref{innerRegSmoothCgce}, \ref{rescaleMinlSurfCgce}, and \ref{rescaleConeCgce} below.
The main result of this paper is
\begin{thm} \label{mainThmFull}
	Let $n \ge 4$ and let $k > 2$ be an even integer.
	The Vel{\'a}zquez mean curvature flow solution of parameter $k$ 
	$\left \{ \Sigma(t) = \Sigma_k^{2n-1}(t) \subset \mathbb{R}^{2n} \right \}_{t \in [0, T)}$
	has uniformly bounded mean curvature
		$$\sup_{t \in [0, T)} \sup_{z \in \Sigma(t)} | H_{\Sigma(t)} (z) | < \infty$$
\end{thm}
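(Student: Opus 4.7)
The proof proceeds by contradiction, following the three-region strategy outlined in the introduction. Suppose $\sup_{t, z} |H_{\Sigma(t)}(z)| = \infty$; then there exist $t_i \nearrow T$ and $z_i \in \Sigma(t_i)$ with $M_i := |H_{\Sigma(t_i)}(z_i)| \to \infty$. I split into three cases based on the location of $z_i$ relative to the two natural length scales: the \emph{inner region} $|z_i| \lesssim (T-t_i)^{\sigma_k + 1/2}$ where $|A|$ concentrates, the \emph{parabolic region} $(T-t_i)^{\sigma_k + 1/2} \ll |z_i| \lesssim \sqrt{T-t_i}$, and the \emph{outer region} $|z_i| \gtrsim \sqrt{T-t_i}$.

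\textbf{Inner and parabolic regions.} Since the scalar mean curvature obeys the linear parabolic equation $\partial_t H = \Delta_{\Sigma(t)} H + |A|^2 H$ along the flow, I rescale parabolically at the scale $(T-t_i)^{\sigma_k + 1/2}$ (inner case) or $\sqrt{T-t_i}$ (parabolic case), centered on $\Sigma_{\mathbf{0}}$ or on $z_i$ respectively. By theorem \ref{Velazquez'sResult}(2)--(3) the rescaled hypersurfaces converge smoothly on compacts to the minimal surface $\overline{\Sigma}$ or to the Simons cone $\mathcal{C}$. Setting $u_i := M_i^{-1} H$ on the rescaled flow, $u_i$ is uniformly bounded by $1$, attains $|u_i| \sim 1$ at the rescaled image of $z_i$, and solves a linear parabolic equation whose coefficients converge to the static Jacobi operator $\Delta + |A|^2$ on $\overline{\Sigma}$ or on $\mathcal{C}$. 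Parabolic Schauder estimates yield a subsequential limit $u_\infty$, a bounded static Jacobi field, nonzero somewhere, on the limiting minimal surface. The Liouville theorems of section \ref{sectLiouville-TypeThms} then provide the contradiction: on $\mathcal{C}$ every bounded Jacobi field of the relevant class must vanish, which settles the parabolic case. On $\overline{\Sigma}$ such a bounded field should be classified as a multiple of the positive ``scaling'' field $u_0$ of corollary \ref{posKernelElt}. The hypothesis $k > 2$ even rules this out: in Velázquez's construction the solution approaches $\overline{\Sigma}$ along the eigenmode of index $k$, and even $k$ means that the corresponding deformation of $\overline{\Sigma}$ carries the wrong symmetry/parity to generate a nonzero scaling component of $u_\infty$. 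Hence $u_\infty \equiv 0$, contradicting $|u_i| \sim 1$ at the marked point.

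\textbf{Outer region.} Beyond the parabolic scale the geometry is barely affected by the singularity forming deep inside. I pinch the profile function $Q$ solving (\ref{MCF2}) between upper and lower barriers assembled from the self-similar solutions of subsection \ref{selfSimilarSolutions}, namely appropriately translated shrinking cylinders together with rescaled copies of $\overline{\Sigma}_b$, and invoke the avoidance principle. The resulting $C^2$ control of $Q$ combined with the explicit formula $H = (1+Q'^2)^{-1/2} \bigl( Q''/(1+Q'^2) + (n-1) Q'/r - (n-1)/Q \bigr)$ yields a uniform bound for $|H|$ in this region.

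\textbf{Main obstacle.} The crux is the Liouville theorem on $\overline{\Sigma}$ combined with the symmetry/parity argument that excludes the scaling field $u_0$ from the blow-up limit. Because $u_0 \in \ker(\Delta + |A|^2)$, the Jacobi operator on $\overline{\Sigma}$ is not positive, and classifying \emph{all} bounded Jacobi fields requires a precise separation-of-variables matching between $\overline{\Sigma}$ and $\mathcal{C}$ at infinity, coupled with the detailed information on how the Velázquez solution of parameter $k$ deforms $\overline{\Sigma}$. A secondary subtlety is the choice of blow-up sequence: a point-picking argument in Hamilton's style will be needed to ensure that $M_i^{-1} H$ limits to a genuinely nontrivial field rather than being collapsed to zero by larger values of $|H|$ elsewhere. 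The matching region, where the inner and parabolic rescalings overlap, will also require some care to ensure that the cases exhaust the flow.
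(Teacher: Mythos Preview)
Your three-region decomposition matches the paper's, but the blow-up argument in the inner and parabolic regions has a genuine gap, and you have misidentified where the hypotheses $k>2$ and $k$ even enter.

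In your inner/parabolic step you normalize by $M_i^{-1}$ and obtain a limit $u_\infty$ that is merely \emph{bounded} on $\overline{\Sigma}$ (or on $\mathcal{C}\setminus\{0\}$). The Liouville theorems of section~\ref{sectLiouville-TypeThms} do not apply to such a limit: theorem~\ref{LiouvilleMinlSurf} requires $|u|\le C(1+|\mathbf x|)^{-a}$ with $a>|\alpha|$, and corollary~\ref{LiouvilleCone} requires $|u|\le C|\mathbf x|^{\alpha-\delta}$. Boundedness alone is insufficient precisely because $u_0\sim r^{\alpha}$ is a bounded Jacobi field on $\overline{\Sigma}$. Your proposed fix---a ``symmetry/parity'' argument using $k$ even to exclude the $u_0$-component---is not an actual argument and is not what the paper does. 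The paper instead proves, by the same blow-up scheme, the \emph{weighted} estimate
\[
\sup_{t}\sup_{z\in B(\Gamma\sqrt{T-t})\cap\Sigma(t)}\bigl(1+\Lambda(t)|z|\bigr)^{a}\,|H_{\Sigma(t)}(z)|<\infty,
\qquad a\in(|\alpha|,|\alpha|+1),
\]
so that the blow-up limits inherit decay $|u_\infty|\le(1+|z|)^{-a}$ (on $\overline{\Sigma}$) or $|u_\infty|\le|z|^{-a}$ (on $\mathcal{C}$), and the Liouville theorems then apply directly. The weight also makes the three cases (inner, intermediate, edge of the parabolic region) glue without any point-picking.

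This is exactly where $k>2$ is used: in the edge case $|z_i|\sim\sqrt{T-t_i}$ the weighted quantity can be estimated directly from Vel\'azquez's bounds as $(T-t_i)^{\lambda_k(1-a/(1+|\alpha|))-1/2}$, and one needs the exponent to be nonnegative, which forces $k\ge 3$ (and $k\ge 4$ when $n=4$); see remark~\ref{paramsThatWork}. The hypothesis that $k$ is \emph{even} plays no role in the blow-up; it is used only in the outer region (subsection~\ref{coarseBarriers}) to guarantee $Q\ge r$ on the inner boundary $r=\Gamma\sqrt{T-t}$, since the leading coefficient $\overline{C}_k$ of $\phi_k$ is positive for even $k$. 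In the outer region itself the paper does not use shrinking cylinders or rescaled $\overline{\Sigma}_b$ as barriers; it uses the Simons cone $Q=r$ as a subsolution, an explicit polynomial supersolution $v^+=C_0r^{2\lambda_k+1}-C_1(T-t)r^{2\lambda_k-1}$ for $Q-r$, Ecker--Huisken interior estimates for the far region, and then a rescaling/interior-estimate argument to control $v_r,v_{rr}$ in the overlap.
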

The proof this theorem will be completed near the end of section \ref{sectEstsOutsideParabRegion}.
The assumption that $k \in \mathbb{N}$ is even is a technical assumption included only to simplify the analysis in section \ref{sectEstsOutsideParabRegion} and it is expected that theorem \ref{mainThmFull} continues to hold without the assumption that $k$ is even.
However, the assumption $k > 2$ is necessary for theorem \ref{mainThmFull}.
Indeed, \cite{GS18} prove that the mean curvature does blow up when $k =2$, albeit at a rate slower than that of the second fundamental form.
Remark \ref{paramsThatWork} also indicates $k \ge 4$ may be a necessary restriction in the case $n = 4$.

\begin{thm} \label{innerRegSmoothCgce}
	Let $\{ \Sigma (t) \}_{t \in [0, T)}$ be the Vel{\'a}zquez mean curvature flow solution of parameter $k$.
	The rescaled hypersurfaces $\tl{\Sigma}(s)$ defined by
		$$\tl{\Sigma}(s(t)) \doteqdot (T-t)^{-\sigma_k - \frac{1}{2} } \Sigma(t)		\qquad s(t) = \frac{ 1}{2 \sigma_k} (T-t)^{-2\sigma_k}$$
	converge in $C^{\infty}_{loc}$ to $\ol{\Sigma}$ as $s \to \infty$.
\end{thm}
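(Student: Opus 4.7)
The plan is to combine the rescaled mean curvature flow equation satisfied by $\tilde\Sigma(s)$ with the uniform curvature bound implicit in item (4) of Theorem \ref{Velazquez'sResult} and the $C^0_{loc}$ convergence in item (3), and then bootstrap regularity by parabolic interior estimates. First, I would derive the evolution equation for $\tilde\Sigma(s)$. Writing $\lambda(t) = (T-t)^{-\sigma_k - 1/2}$ and differentiating $\tilde{\mathbf F} = \lambda(t)\mathbf F$ with respect to $s$ yields
\[
\partial_s \tilde{\mathbf F} = \mathbf H_{\tilde\Sigma(s)} + \frac{2\sigma_k+1}{4\sigma_k s}\,\tilde{\mathbf F},
\]
so $\tilde\Sigma(s)$ evolves by MCF plus a drift term of order $O(1/s)$ that is uniformly bounded on any fixed compact set. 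Item (4) of Theorem \ref{Velazquez'sResult} also yields the scale-invariant bound
\[
\sup_{s}\,\|A_{\tilde\Sigma(s)}\|_{L^\infty} = \sup_{t}\,\lambda(t)^{-1}\|A_{\Sigma(t)}\|_{L^\infty} \lesssim 1.
\]

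Next, I would use the uniform $C^0_{loc}$ convergence $\tilde\Sigma(s) \to \ol\Sigma$ to write $\tilde\Sigma(s)$ as a normal graph over any prescribed compact $K \subset \ol\Sigma$ once $s$ is large enough. The resulting height function $u(\cdot,s) : K \to \mathbb R$ solves a quasilinear parabolic equation whose linearization at $u \equiv 0$ is the Jacobi operator $\partial_s - (\Delta_{\ol\Sigma} + |A_{\ol\Sigma}|^2)$, plus higher-order terms vanishing at $u=0$ and plus the $O(1/s)$ drift inherited from the rescaled flow. Since $\ol\Sigma$ is smooth and $\|u(\cdot,s)\|_{C^0(K)} \to 0$, the equation is uniformly strictly parabolic on $K$ for all large $s$, with coefficients controlled in $C^{k}$ for every $k$.

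The uniform bound on $|A_{\tilde\Sigma(s)}|$ translates, via the graph representation and standard interior estimates for quasilinear parabolic equations (Krylov–Safonov together with Schauder iteration), into uniform $C^{k,\alpha}$ bounds on $u(\cdot,s)$ on any compact subset of $K$, for every $k \in \mathbb N$ and any $\alpha \in (0,1)$. Combined with $u(\cdot,s) \to 0$ in $C^0$ and the Arzelà–Ascoli theorem, this upgrades the convergence $\tilde\Sigma(s) \to \ol\Sigma$ from $C^0_{loc}$ to $C^\infty_{loc}$, as desired.

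The step I expect to be the main obstacle is the bootstrap itself: one must verify that the $O(1/s)$ perturbation in the rescaled flow equation is a genuinely lower-order term in the parabolic estimates and does not spoil the Schauder iteration, and that the graphical parametrization over $\ol\Sigma$ can be arranged uniformly in $s$ on an exhaustion by compact sets. Both are manageable by exploiting the $O(n)\times O(n)$-symmetry, which reduces the geometry to the profile function and ensures that the perturbation's coefficients and the domain of graphicality behave nicely on each fixed compact set as $s \to \infty$.
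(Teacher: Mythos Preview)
Your outline is correct and follows the same overall strategy as the paper: derive the rescaled evolution equation (with the $O(1/s)$ drift), obtain enough a priori control to make the equation uniformly parabolic on compact sets, bootstrap via interior parabolic estimates, and then use Arzel\`a--Ascoli together with the known $C^0_{loc}$ limit to upgrade to $C^\infty_{loc}$.

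The one substantive difference is in how the initial gradient/Lipschitz bound is produced. You invoke the global curvature estimate in item (4) of Theorem~\ref{Velazquez'sResult} to get $\sup_s \|A_{\tilde\Sigma(s)}\|_{L^\infty}\lesssim 1$, and from that (plus $C^0$-closeness to $\ol\Sigma$) deduce uniform graphicality and Lipschitz control of the normal height function. The paper instead works from the outset with the rescaled profile function $L(p,s)$, uses the \emph{convexity} of $Q$ from item (1) together with the explicit $C^0$ barriers from Vel\'azquez's construction (lemmas 4.2 and 4.5 of \cite{V94}) to get a uniform Lipschitz bound on $L$, and then applies the quasilinear interior estimates of \cite{LSU88} followed by Schauder. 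Your route is slightly more geometric and black-boxes more of Vel\'azquez's fine structure; the paper's route is more explicit and avoids the (mild) step of converting a bound on $|A|$ into a gradient bound for the graph. Either way the bootstrap and the concluding compactness/uniqueness-of-limit argument are the same.
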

\begin{proof}
	Let
		$$L(p, s) = (T-t)^{-\sigma_k -\frac{1}{2}} Q \left(p (T-t)^{\sigma_k + \frac{1}{2}}, t \right)		\qquad s = \frac{1}{2 \sigma_k} (T-t)^{-2 \sigma_k}$$
	be the profile function of the rescaled hypersurfaces $\tl{\Sigma}(s)$.
	Note that if $Q$ satisfies \ref{MCF1} then $L$ solves
	\begin{equation} \label{evolEqnLp}
		\partial_s L = \frac{ \partial_{pp} L }{1 + (\partial_p L)^2} + \frac{ n-1}{p} \partial_p L - \frac{n-1}{L} - \frac{ \left( \sigma_k + \frac{1}{2} \right) }{2 \sigma_k s}  \left( p \partial_p L - L \right)
	\end{equation}
	or
	\begin{equation} \label{evolEqnLxi}
		\partial_s L = \sqrt{1 + | \nabla L |^2} \ div \left( \frac{ \nabla L}{ \sqrt{ 1 + | \nabla L |^2} } \right) - \frac{n-1}{L} - \frac{ \left( \sigma_k + \frac{1}{2} \right) }{2 \sigma_k s} \left( \xi \cdot \nabla L - L \right) 
	\end{equation}
	\comment{note that my constant on the last term is a little different from Velazquez's. it matches Guo-Sesum tho}
	if we regard $L = L ( | \xi|, s)$ as a function of $\xi \in \mathbb{R}^n$ and $\xi = x (T- t)^{- \sigma_k - \frac{1}{2}}$.
	We claim that $L \in C^{m, \beta}_{loc}$ and that, moreover, for every $R>0$ there exists an $s_0(R)$ such that
	the $C^{m,\beta}( B_R \times [s_1, s_1 + R^2] )$-norm is independent of $s_1$ if $s_1 \ge s_0$.
	
	First, by the proof of lemma 4.5 in ~\cite{V94}, there exist constants $A > 0$, $s_0$, $a > 0$, and $0 < \theta_- < 1 < \theta_+$ such that
		$$0 < \ol{Q}_a ( p/ \theta_-) \le L(p,s) \le \ol{Q}_a( p / \theta_+) 		\qquad \text{for all } (p,s) \in [0,A] \times [s_0, \infty)$$
	After possibly taking larger $s_0$ and $A$, lemma 4.2 in ~\cite{V94} implies that, for some constant $C_k$ depending only on $k$ and $\mu = \frac{C_k}{100} > 0$,
	\begin{equation*} \begin{aligned}
		 \left| L(p,s) - p - C_k p^{- |\alpha|} \right| &\le \frac{C_k}{100} p^{- |\alpha|} 		\\
		 \left| \partial_p L(p,s) - 1 + C_k | \alpha| p^{- |\alpha|-1} \right| &\le \frac{C_k}{100} p^{- | \alpha| -1}	\\
		 \left| \partial_{pp} L(p,s) - C_k |\alpha| ( | \alpha |+1) p^{- | \alpha | -2} \right| &\le \frac{C_k}{100} p^{- |\alpha| -2}		\\
	\end{aligned} \end{equation*}
	for all $A \le p \le A \sqrt{2 \sigma_k s}$ and $s \in [s_0, \infty)$.
	
	By theorem \ref{Velazquez'sResult}, $L$ is convex in $p$ and therefore locally Lipschitz in $p$ with estimate
		$$\sup_{p_1, p_2 \in [0, R]} \frac{ | L(p_1, s) - L(p_2,s) |}{|p_1 - p_2|} \le 3 \sup_{p \in [0, 3R]} |L(p,s)|$$
	for any $0 < R < \frac{1}{3A} \sqrt{2 \sigma_k s}$.
	Since $| \nabla L | = | \partial_p L|$, $L(| \xi|,s )$ is also locally Lipschitz in $\xi$.
	Moreover, the $C^0$ estimates for $L$ above imply that 
	for any $R$ there exists an $s_0=s_0(R) \sim R^2$ such that
	the Lipschitz constant is independent of $s$ for $s \ge s_0$.
	
	The Lipschitz bounds on $L$ imply that equation (\ref{evolEqnLxi}) is uniformly parabolic.
	Rewriting this equation (\ref{evolEqnLxi}) as 
		$$\partial_s L = \Delta L - \frac{ \nabla^i L \nabla^j L \nabla_i \nabla_j L}{1 + | \nabla L |^2} - \frac{ n-1}{L} - \frac{\left( \sigma_k + \frac{1}{2} \right)}{2 \sigma_k s} ( \xi \cdot \nabla L - L )$$
	interior estimates for quasilinear equations (namely theorem 1.1 in chapter VI of \cite{LSU88}) 
	now implies that $L$ is $C^{1, \beta}_{loc}$.
	Schauder estimates then yield that $L$ is in $C^{m, \beta}_{loc}$ for any $m$.
	Moreover, the Lipschitz bounds and coefficients in equation (\ref{evolEqnLxi}) can be bounded 
	by constants independent of $s$ when $s$ is sufficiently large depending on $R$. 
	It follows that for any $R>0$ there exists $s_0(R)$ such that the $C^{m, \beta} ( B_R \times [s_1, s_1 + R^2] )$-norm of $L$ is independent of $s_1$ for $s_1 \ge s_0$.

	
	Finally, we claim that $L$ converges in $C^{\infty}_{loc}$ to $\ol{Q}$ as $s \to \infty$.
	Suppose not for the sake of contradiction.
	Then there exists $R > 0$, $m \in \mathbb{N}$, $\epsilon >0$, and sequence of times $s_k \nearrow \infty$ such that
		$$\| L( \cdot , \cdot + s_k ) - \ol{Q} \|_{C^m( B_R \times [s_k , s_k + R^2] ) } > \epsilon$$
	By the arguments above, 
		$$\| L_k \doteqdot L( \cdot, \cdot + s_k ) \|_{C^{m, \beta} ( B_R \times [ 0 , R^2] )}$$
	is bounded by a constant independent of $k$.
	Hence, after passing to a subsequence, we may extract a limit
		$$L_k \xrightarrow[ k \to \infty ]{C^{m}( B_R \times [0, R^2] )} L_\infty$$	
	In particular, $L_k$ converges uniformly to $L_\infty$ and thus $L_\infty = \ol{Q}$.
	This however contradicts the choice of $R,m, (s_k)_{k \in \mathbb{N}}$.
\end{proof}

\begin{cor} \label{rescaleMinlSurfCgce}
	Let $\{ \Sigma(t) \}_{t \in [0, T)}$ be the Vel{\'a}zquez mean curvature flow solution of parameter $k$.
	For any sequence $t_i \nearrow T$ and $\Lambda_i \doteqdot ( T - t_i)^{- \sigma_k - \frac{1}{2}}$,
	the sequence of mean curvature flows
		$$\tl{\Sigma}_i(\tau) \doteqdot \Lambda_i \Sigma \left( t_i + \frac{ \tau}{\Lambda_i^2} \right)
		\qquad \left( \tau \in \left[- t_i \Lambda_i^2, ( T - t_i) \Lambda_i^2 \right ) \right)$$
	converges in $C^\infty_{loc}$ to $\ol{\Sigma}$ as $i \to \infty$.
\end{cor}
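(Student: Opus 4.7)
The strategy is to reduce the corollary to Theorem \ref{innerRegSmoothCgce} by observing that the parabolic rescaling $\tl{\Sigma}_i(\tau)$ differs from the Velazquez-type rescaling $\tl{\Sigma}(s(t))$ only by a dilation that becomes trivial as $i \to \infty$. Let $\tl{Q}_i(p, \tau)$ denote the profile function of $\tl{\Sigma}_i(\tau)$, so $\tl{Q}_i(p, \tau) = \Lambda_i Q(p/\Lambda_i, \, t_i + \tau/\Lambda_i^2)$. From the defining relation
$$Q(x, t) = (T-t)^{\sigma_k + 1/2} L\left( (T-t)^{-\sigma_k - 1/2} x, \, s(t) \right)$$
used in the proof of Theorem \ref{innerRegSmoothCgce}, substituting $t = t_i + \tau/\Lambda_i^2$ and $x = p/\Lambda_i$ yields
$$\tl{Q}_i(p, \tau) = \mu_i(\tau) \, L\!\left( p/\mu_i(\tau), \, s(t_i + \tau/\Lambda_i^2) \right),$$
where $\mu_i(\tau) \doteqdot \left( 1 - (T-t_i)^{2\sigma_k} \tau \right)^{\sigma_k + 1/2}$.

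Because $\sigma_k > 0$, we have $(T-t_i)^{2\sigma_k} \to 0$, so for $\tau$ in any fixed compact interval $[-T_0, T_0]$,
$$\mu_i(\tau) \to 1, \qquad s(t_i) \to \infty, \qquad s(t_i + \tau/\Lambda_i^2) - s(t_i) \to \tau,$$
all uniformly in $\tau$; the last identity follows from a Taylor expansion of $s(t) = \frac{1}{2\sigma_k}(T-t)^{-2\sigma_k}$ together with $s(t_i)(T-t_i)^{2\sigma_k} = 1/(2\sigma_k)$. Combining these with Theorem \ref{innerRegSmoothCgce}---specifically the $s_1$-independent $C^{m,\beta}\bigl( B_R \times [s_1, s_1 + R^2] \bigr)$ bounds for $L$ and the $C^m_{loc}$ convergence of $L$ to $\ol{Q}$ on parabolic cylinders extracted by Arzela-Ascoli in that proof---one concludes $\tl{Q}_i \to \ol{Q}$ in $C^m\!\left( B_R \times [-T_0, T_0] \right)$ for every $R, T_0 > 0$ and $m \in \mathbb{N}$. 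The smooth convergence of $O(n) \times O(n)$-invariant profile functions then translates, via the parametrization of Section \ref{sectParams}, to $C^\infty_{loc}$ convergence of the hypersurfaces themselves.

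The substantive analytic content is already packaged in Theorem \ref{innerRegSmoothCgce}; the only real obstacle is careful bookkeeping, making sure that the convergence of $L$ on parabolic $s$-cylinders does transfer through the time-dependent change of variables to convergence on $\tau$-cylinders. The geometric point underlying the reduction is that the window $\tau \in [-T_0, T_0]$ corresponds to $t - t_i = O\!\left( (T-t_i)^{2\sigma_k + 1} \right) \ll T - t_i$, so over this window the dilation factor $\Lambda_i (T-t)^{\sigma_k + 1/2} = \mu_i(\tau)$ relating the two rescalings is asymptotically $1$.
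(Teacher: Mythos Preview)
Your proposal is correct and follows essentially the same approach as the paper: both express $\tl{Q}_i$ in terms of $L$ via the dilation factor $\mu_i(\tau)=\Lambda_i/\Lambda(t_i+\tau/\Lambda_i^2)$, verify that $\mu_i\to 1$ and that the $s$- and $\tau$-time variables become Lipschitz equivalent on compact intervals, and then invoke the uniform $C^{m,\beta}$ bounds and $C^\infty_{loc}$ convergence of $L$ from Theorem~\ref{innerRegSmoothCgce}. Your bookkeeping is in fact slightly cleaner than the paper's (which contains a harmless typo in the argument of $L$).
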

\begin{proof}
	The profile function $\tl{Q}_i(\xi, \tau)$ for $\tl{\Sigma}_i(\tau)$ is related to that of $\Sigma(t)$ by
	\begin{equation*} \begin{aligned} 
		\tl{Q}_i (\xi, \tau) 
		&= \Lambda_i Q \left( \xi/ \Lambda_i, t_i + \frac{ \tau}{\Lambda_i^2} \right) \\
		&= \frac{ \Lambda_i}{\Lambda(t)} 
		Q \left( \frac{ \Lambda_i}{\Lambda(t)} \frac{ \xi}{ \Lambda(t)} , t \right)		&& (t = t_i + \tau/ \Lambda_i^2) \\
		&= \frac{ \Lambda_i}{\Lambda(t_i + \tau/ \Lambda_i^2)} L \left( \frac{ \Lambda_i}{\Lambda(t_i + \tau/\Lambda_i^2)}  \xi, s(t_i + \tau/ \Lambda_i^2) \right)
	\end{aligned} \end{equation*}
	where $L$ is the profile function from the proof of theorem \ref{innerRegSmoothCgce}.
	Observe that, because $\Lambda_i^2 \gg (T-t_i)^{-1}$,
	\begin{gather*}
		\frac{ \Lambda_i}{ \Lambda(t_i + \tau/ \Lambda_i^2)}
		= \left( 1 - \frac{ \tau}{ \Lambda_i^2 (T - t_i) } \right)^{\sigma+1/2}
		\to 1
	\end{gather*}
	as $i \to \infty$ uniformly on compact $\tau$-intervals.
	Additionally,
	\begin{equation*} \begin{aligned} 
		&2 \sigma_k \left( s(t_i+ \tau_1/\Lambda_i^2) - s( t_i + \tau_0/ \Lambda_i^2 ) \right)\\
		= &( T - t_i)^{-2 \sigma} \left[ \left( 1 - \frac{ \tau_1}{ \Lambda_i^2 (T - t_i) } \right)^{- 2 \sigma} - \left( 1 - \frac{ \tau_0}{ \Lambda_i^2 (T - t_i) } \right)^{- 2 \sigma} \right]
	\end{aligned} \end{equation*}
	Using Taylor's theorem on $x \mapsto  (1 - x)^{- 2 \sigma_k}$ to estimate the terms in brackets,
	it follows that\footnote{Here, $O \left( \frac{\tau_0}{ \Lambda_i^2 ( T - t_i)} \right)$ denotes a quantity whose absolute value is bounded by $C \frac{ \tau_0}{ \Lambda_i^2 ( T - t_i)}$ for all $i$ sufficiently large locally uniformly in $\tau_0, \tau_1$. A similar definition applies for $O \left( \frac{ ( \tau_1 - \tau_0)^2}{ \Lambda_i^4( T - t_i)^2} \right)$ and $O \left( \frac{ ( \tau_1 - \tau_0)^2}{ \Lambda_i^2( T - t_i)} \right)$.}
	\begin{equation*} \begin{aligned}
		& \qquad 2 \sigma_k \left( s(t_i+ \tau_1/\Lambda_i^2) - s( t_i + \tau_0/ \Lambda_i^2 ) \right)	\\
		&= (T- t_i)^{-2 \sigma} \left[
		2 \sigma_k \frac{ \tau_1 - \tau_0}{ \Lambda_i^2 ( T- t_i)} 
		+ O \left( \frac{ \tau_0}{\Lambda_i^2(T - t_i)} \right) \frac{ \tau_1 - \tau_0}{\Lambda_i^2 ( T- t_i)}
		+ O \left( \frac{ (\tau_1 - \tau_0)^2}{\Lambda_i^4(T - t_i)^2} \right) \right]	\\
		&=  2 \sigma_k (\tau_1 - \tau_0) 
		+ O\left( \frac{ \tau_0}{ \Lambda_i^2 ( T - t_i) } \right) ( \tau_1 - \tau_0)
		 + O\left( \frac{ (\tau_1 - \tau_0)^2}{ \Lambda_i^2 ( T - t_i) } \right)
	\end{aligned} \end{equation*}
	In particular, the $s$ and $\tau$ variables are locally uniformly Lipschitz equivalent as $i \to \infty$.
	The $C^\infty_{loc}$-convergence of $\tl{Q}_i$ to $\ol{Q}$ as $i \to \infty$ now follows from theorem \ref{innerRegSmoothCgce}.
\end{proof}

\begin{thm} \label{rescaleConeCgce}
	Let $\{ \Sigma(t) \}_{t \in [0, T)}$ be the Vel{\'a}zquez mean curvature flow solution of parameter $k$.
	For any sequence $t_i \nearrow T$ and 
	$\Lambda_i$ with 
		$$(T - t_i)^{-\frac{1}{2}} \ll \Lambda_i \ll ( T- t_i)^{- \sigma_k - \frac{1}{2}}$$
	the sequence of mean curvature flows
		$$\tl{\Sigma}_i (\tau) \doteqdot \Lambda_i \Sigma \left( t_i + \frac{\tau}{\Lambda_i^2} \right)
		\qquad \left( \tau \in \left[- t_i \Lambda_i^2, ( T - t_i) \Lambda_i^2 \right ) \right)$$
	$C^\infty_{loc}( ( \mathbb{R}^{2n} \setminus \{ 0 \} ) \times \mathbb{R} )$-converges to the Simons cone $\cl{C}$ as $i \to \infty$.
\end{thm}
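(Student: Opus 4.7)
The plan is to mimic the strategy of corollary \ref{rescaleMinlSurfCgce}, but to exploit Vel\'azquez's asymptotic expansion of the inner-scale profile $L$ at $p = \infty$ rather than its large-$s$ limit. Let $L(p, s)$ denote the rescaled profile from the proof of theorem \ref{innerRegSmoothCgce}. With $t = t_i + \tau/\Lambda_i^2$ and $\mu_i(\tau) \doteqdot \Lambda_i(T-t)^{\sigma_k + 1/2}$, the profile function of $\tl{\Sigma}_i(\tau)$ is
\begin{equation*}
\tl{Q}_i(\xi, \tau) = \mu_i(\tau)\, L\!\left( |\xi|/\mu_i(\tau),\; s(t) \right).
\end{equation*}
The two-sided hypothesis on $\Lambda_i$ forces $\mu_i(\tau) \to 0$ locally uniformly in $\tau$ (from $\Lambda_i \ll (T - t_i)^{-\sigma_k - 1/2}$), while $\mu_i(\tau)\sqrt{2\sigma_k s(t)} = \Lambda_i (T-t)^{1/2}(1 + o(1)) \to \infty$ locally uniformly in $\tau$ (from $\Lambda_i \gg (T - t_i)^{-1/2}$).

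First I would establish $C^0_{loc}$-convergence of the profile functions on $(\mathbb{R}^n \setminus \{0\}) \times \mathbb{R}$. For $\xi$ with $|\xi| \in [r_0, r_1] \subset (0, \infty)$ and $\tau$ in a bounded interval, the argument $p = |\xi|/\mu_i(\tau)$ tends to infinity while satisfying $A \le p \le A\sqrt{2\sigma_k s(t)}$ for $i$ large, so Vel\'azquez's estimate from the proof of theorem \ref{innerRegSmoothCgce},
\begin{equation*}
\bigl| L(p, s) - p - C_k p^{- |\alpha|} \bigr| \le \tfrac{C_k}{100}\, p^{-|\alpha|},
\end{equation*}
applies and yields
\begin{equation*}
\tl{Q}_i(\xi, \tau) = |\xi| + O\!\bigl( \mu_i^{1+|\alpha|}\, |\xi|^{-|\alpha|} \bigr) \to |\xi|
\end{equation*}
uniformly on the compact set. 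This is precisely the Simons cone profile $Q_{\cl{C}}(r) = r$.

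To upgrade to $C^\infty_{loc}$, the analogous bounds on $\partial_p L$ and $\partial_{pp} L$ quoted in the proof of theorem \ref{innerRegSmoothCgce} give uniform $C^2_{loc}$-control on $\tl{Q}_i$, with $|\nabla \tl{Q}_i| \to 1$ and $\nabla^2 \tl{Q}_i \to 0$ on the compact set. In particular, the graphical mean curvature flow equation (\ref{MCF2}) for $\tl{Q}_i$ is uniformly parabolic on a neighborhood of the compact set in $(\mathbb{R}^n \setminus \{0\}) \times \mathbb{R}$, with coefficients bounded away from degeneracy and infinity. Interior quasilinear parabolic regularity (theorem 1.1 of chapter VI in \cite{LSU88}) and iterated Schauder estimates then produce uniform $C^{m, \beta}_{loc}$-bounds on $\tl{Q}_i$, exactly as at the end of the proof of theorem \ref{innerRegSmoothCgce}. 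Any $C^m_{loc}$-subsequential limit must coincide with the $C^0$-limit $|\xi|$, which forces the full sequence to converge in $C^\infty_{loc}$ to the Simons cone $\cl{C}$.

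The main technical hurdle is verifying that $p = |\xi|/\mu_i(\tau)$ stays inside Vel\'azquez's asymptotic regime $[A, A\sqrt{2\sigma_k s(t)}]$ uniformly over the compact sets in question: the upper bound is equivalent to $|\xi| \lesssim \Lambda_i (T-t)^{1/2}$ (which uses $\Lambda_i \gg (T-t_i)^{-1/2}$), and the lower bound to $|\xi| \gtrsim \mu_i$ (which uses $\Lambda_i \ll (T-t_i)^{-\sigma_k - 1/2}$). Once this intermediate-regime window is secured, the $C^\infty_{loc}$ bootstrap via parabolic regularity is a standard exercise.
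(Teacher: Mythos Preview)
Your proof is correct and follows the same core idea as the paper---verify that the rescaled argument lands in Vel\'azquez's intermediate regime, apply his asymptotic expansion there, and deduce convergence to the cone profile $r$. Two tactical choices differ from the paper's proof. First, the paper works with the \emph{parabolic} profile $q(\rho,s) = (T-t)^{-1/2}Q(\rho(T-t)^{1/2},t)$ rather than the inner profile $L$; since $q$ and $L$ are related by a simple rescaling, your identity $\tl{Q}_i = \mu_i L(|\xi|/\mu_i, s)$ is equivalent to the paper's $\tl{Q}_i = \Lambda_i\sqrt{T-t}\, q(\xi/(\Lambda_i\sqrt{T-t}), s)$, and the window $A \le p \le A\sqrt{2\sigma_k s}$ matches the paper's $Ae^{-\sigma_k s} \le \rho \le A$. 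Second, for higher regularity the paper simply asserts that Vel\'azquez's $C^0$ estimate on $q-\rho$ propagates to all spatial derivatives (with constants $A_j, C_j$) and computes $\partial_{|\xi|}^m \tl{Q}_i$ directly, whereas you use only the $C^0$--$C^2$ bounds on $L$ quoted in theorem \ref{innerRegSmoothCgce} and recover the higher derivatives via a parabolic bootstrap. Your route is slightly more self-contained (it does not rely on the unproved derivative-propagation claim), while the paper's route avoids the Arzel\`a--Ascoli subsequence detour; both reach the same conclusion with comparable effort.
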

\begin{proof}
	First, consider the profile function
		$$q(\rho, s) \doteqdot (T - t)^{-1/2} Q( \rho ( T - t)^{1/2} , t) 		\qquad s(t) =  - \log(T - t)$$
	for the parabolically rescaled hypersurfaces $(T - t)^{-1/2} \Sigma(t)$.
	Note that if $Q$ solves equation (\ref{MCF1}) then $q$ solves
		$$\partial_s q = \frac{ \partial_{\rho \rho} q}{1 + ( \partial_\rho q)^2} + \frac{n-1}{\rho} \partial_\rho q- \frac{n-1}{q} - \frac{1}{2} ( \rho \partial_\rho q - q)$$
	
	In \cite{V94},
	condition (2.41) in the definition of $\mathcal{A}$ and the rescaling argument in the proof of lemma 4.2 imply that
	there exist constants $A, C > 0$ such that	
		$$| q(\rho, s) - \rho | \le \frac{C  }{ \rho^{| \alpha|} } e^{- \lambda_k s}		\qquad 
		\rho \in [ A e^{- \sigma_k s}, A]$$
	Additionally, these estimates propagate to the spatial derivatives 
	in the sense that for all $j \in \mathbb{N}$ there exist $A_j , C_j > 0$ such that
		$$\rho^{j} \left| \partial_\rho^j \left( q(\rho , s) - \rho \right) \right| \le \frac{C_j  }{ \rho^{| \alpha|} } e^{- \lambda_k s}		\qquad \rho \in [ A_j e^{- \sigma_k s}, A_j] $$
		
	Now consider
		$$\tl{\Sigma}_i (\tau) \doteqdot \Lambda_i \Sigma \left( t_i + \frac{ \tau}{\Lambda_i^2} \right)$$	
	and their profile functions
	\begin{equation*} \begin{aligned}
		\tl{Q}_i(\xi, \tau) 
		&= \Lambda_i Q( \xi/ \Lambda_i, t_i + \tau/ \Lambda_i^2 )	\\
		&= \Lambda_i \sqrt{T - t} \frac{1}{ \sqrt{ T - t} } Q \left( \frac{ \xi \sqrt{T - t} }{ \Lambda_i \sqrt{ T - t} }, t \right)	
		&& ( t = t_i + \tau/ \Lambda_i^2 ) \\
		&= \Lambda_i \sqrt{ T - t} \, q \left( \frac{ \xi }{ \Lambda_i \sqrt{ T - t} }, s( t_i + \tau / \Lambda_i^2 ) \right)
	\end{aligned} \end{equation*}
	
	Note that if $| \xi | \in [ r_0, R_0 ]$ for positive constants $0 < r_0 < R_0$ then
		$$ \frac{ | \xi |}{ \Lambda_i \sqrt{ T - t} }
		\le \frac{ R_0}{\Lambda_i \sqrt{ T - t} }
		= \frac{ R_0}{ \Lambda_i \sqrt{ T - t_i } } \left( 1 - \frac{ \tau }{ \Lambda_i^2 ( T - t_i) } \right)^{-1/2}
		\to 0$$
	as $i \to \infty$ uniformly on compact $\tau$-intervals.
	Additionally,
		$$ \frac{ | \xi |}{ \Lambda_i \sqrt{ T - t} } e^{\sigma_k s(t_i + \tau/ \Lambda_i^2)}
		\ge 
		\frac{r_0}{ \Lambda_i ( T - t_i)^{\frac{1}{2} + \sigma_k } } \left( 1 - \frac{ \tau}{ \Lambda_i^2 (T - t_i) } \right)^{- \frac{1}{2} - \sigma_k}
		\to \infty$$
	as $i \to \infty$ uniformly on compact $\tau$-intervals.
	In particular, for $i$ sufficiently large 
		$$A e^{- \sigma_k s} \le \frac{ |\xi|}{ \Lambda_i \sqrt{ T - t} } \le A$$
	
	
	Letting $\partial_{|\xi|}$ denote derivatives with respect to the radial variable $|\xi|$,
	we may therefore apply the estimates for $q(\rho, s)$ above to deduce 		
	\comment{using the $q$ estimate looks wrong but can be confirmed by checking how the $\partial_{|\xi|}^m | \xi|$ look for $m = 0, 1$ directly, there's just no clean way to write this}
	\begin{equation*} \begin{aligned} 
		&\,  \quad \left| \partial_{| \xi|}^m \tl{Q}_i( | \xi| , \tau) - \partial_{|\xi|}^m | \xi| \right|	\\
	 	&= \left| \frac{ \Lambda_i \sqrt{ T - t} }{ ( \Lambda_i \sqrt{T - t})^m} \, q\left( \frac{ | \xi|}{ \Lambda_i \sqrt{T - t} }, s \right) -  \partial_{|\xi|}^m | \xi| \right|	\\
		&\le \frac{ \Lambda_i \sqrt{ T - t} }{ ( \Lambda_i \sqrt{ T - t} )^m } \frac{ C ( \Lambda_i \sqrt{ T - t} )^{m + | \alpha|} }{| \xi|^{m + \alpha} } e^{- \lambda_k s}	\\
		&=  \Lambda_i^{1 + | \alpha|} ( T - t)^{ \frac{ | \alpha| + 1}{2} + \lambda_k}
		\frac{ C}{ | \xi|^{m + |\alpha|} } \\
		&= \left( \Lambda_i ( T - t_i)^{\frac{1}{2} + \sigma_k} \right)^{1 + | \alpha|} 
		\left( \frac{T - t}{T - t_i}		\right)^{ \frac{ | \alpha| + 1}{2} + \lambda_k} 
		\frac{C}{  | \xi|^{m + |\alpha|} } \\
		&= \left( \Lambda_i ( T - t_i)^{\frac{1}{2} + \sigma_k} \right)^{1 + | \alpha|} 
		\left(  1 - \frac{ \tau}{\Lambda_i^2 ( T - t_i) } \right)^{  \frac{ | \alpha| + 1}{2} + \lambda_k} 
		 \frac{C}{  | \xi|^{m + |\alpha|} }
	\end{aligned} \end{equation*}
	Because $( T - t_i)^{-1/2} \ll \Lambda_i \ll ( T - t_i)^{- \sigma_k - \frac{1}{2} }$,
	the middle factor goes to $1$ on compact $\tau$-intervals 
	and the first factor limits to $0$ as $i \to \infty$.
	$C^\infty_{loc} \left( (\mathbb{R}^{2n} \setminus \{ 0 \} ) \times \mathbb{R} \right)$-convergence follows.
\end{proof}

\section{The Jacobi Operator on $\ol{\Sigma}$ } \label{sectJacobiOperator}

In this section, we investigate the Jacobi operator $\Delta_{\ol{\Sigma}} + | \ol{A} |^2$ for the minimal surface $\ol{\Sigma}$ described in subsection \ref{VMinlSurf}.
We use overlines to refer to geometric tensors associated to $\ol{\Sigma}$.
For example, $\ol{A}$ is the second-fundamental form of $\ol{\Sigma}$, $\ol{\nu}$ the unit normal, $\ol{g}$ the induced metric, $\Delta_{\ol{\Sigma}}$ the Laplace-Beltrami operator, and $dV_{\ol{g}}$ the volume form.

\subsection{$L^2$ Theory}
Consider 
	$$\cl{L} \doteqdot \Delta_{\ol{\Sigma}} + \left| \ol{A} \right|^2$$
as an unbounded operator on $L^2 = L^2( \ol{\Sigma}, dV_{\ol{g}})$ with domain
	$$Dom(\cl{L}) = \{ u \in L^2 : \cl{L}u \in L^2 \} = \{ u \in L^2 : \Delta_{\ol{\Sigma}} u \in L^2 \} \subset H^1( \ol{\Sigma}, dV_{\ol{g}}) $$
	
We first recall some elementary properties of the Laplace-Beltrami operator $\Delta_{\ol{\Sigma}}$.
One may refer to \cite{RS1}, \cite{RS4}, \cite{Urakawa93}, and the references therein for additional background on the contents of this subsection.	
\begin{prop}
	$\Delta_{\ol{\Sigma}} : Dom( \cl{L}) \to L^2( dV_{\ol{g}})$ is a self-adjoint operator and
	$\Delta_{\ol{\Sigma}} \le 0$ in the sense that
		$$( u , \Delta_{\ol{\Sigma}} u )_{L^2} \le 0		\qquad \text{ for all } u \in Dom( \cl{L}) $$
\end{prop}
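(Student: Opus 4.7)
The plan is to exploit the fact that $\ol{\Sigma}$ is a complete Riemannian manifold (being a smooth, properly embedded hypersurface in $\mathbb{R}^{2n}$ that is asymptotic to the Simons cone, so geodesic balls in $\ol{\Sigma}$ are closed and bounded in the ambient space, hence compact). Once completeness is in hand, both assertions reduce to classical facts about the Laplace--Beltrami operator which I would cite from one of the references (\cite{RS1}, \cite{RS4}, \cite{Urakawa93}) and then verify the specific domain identification.

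\textbf{Self-adjointness.} First I would recall the theorem (Gaffney/Chernoff/Strichartz) that on a complete Riemannian manifold, $\Delta_{\ol{\Sigma}}$ acting on $C^\infty_c(\ol{\Sigma})$ is essentially self-adjoint in $L^2(dV_{\ol{g}})$. Its unique self-adjoint extension has domain exactly
\[
\{u \in L^2(dV_{\ol{g}}) : \Delta_{\ol{\Sigma}} u \in L^2(dV_{\ol{g}})\},
\]
where $\Delta_{\ol{\Sigma}} u$ is interpreted distributionally. Since $|\ol{A}|^2$ is smooth (as $\ol{\Sigma}$ is smooth) and grows only like $r^{-2}$ on $\ol{\Sigma}$, the condition $\cl{L}u \in L^2$ is equivalent to $\Delta_{\ol{\Sigma}} u \in L^2$ for $u \in L^2$ on bounded regions; on the asymptotic region where the cone dominates, one checks that $|\ol{A}|^2 u$ is automatically in $L^2_{loc}$ whenever $u$ is, so the two domains coincide. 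This establishes self-adjointness of $\Delta_{\ol{\Sigma}}$ on $Dom(\cl{L})$.

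\textbf{Non-positivity.} For $u \in C^\infty_c(\ol{\Sigma})$, Green's identity gives
\[
(u, \Delta_{\ol{\Sigma}} u)_{L^2} = -\int_{\ol{\Sigma}} |\nabla u|^2_{\ol{g}}\, dV_{\ol{g}} \le 0.
\]
To extend this to all $u \in Dom(\cl{L})$, I would use a standard cutoff argument: pick $\phi_R \in C^\infty_c(\ol{\Sigma})$ with $\phi_R \equiv 1$ on a ball of radius $R$ (in the intrinsic metric), $|\nabla \phi_R|_{\ol{g}} \le 2/R$, supported in the ball of radius $2R$ (such cutoffs exist on a complete manifold). For $u \in Dom(\cl{L})$ and $\phi_R u \in H^1_0$, integration by parts gives
\[
\int \phi_R^2\, u\, \Delta_{\ol{\Sigma}} u \, dV_{\ol{g}} = -\int |\nabla(\phi_R u)|^2\, dV_{\ol{g}} + \int |\nabla \phi_R|^2\, u^2\, dV_{\ol{g}}.
\]
Since $u, \Delta_{\ol{\Sigma}} u \in L^2$, the left side converges to $(u, \Delta_{\ol{\Sigma}} u)_{L^2}$ as $R \to \infty$ by dominated convergence, and the error term is bounded by $4 R^{-2} \|u\|_{L^2}^2 \to 0$. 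Monotone convergence on the remaining term yields $\nabla u \in L^2$ and
\[
(u, \Delta_{\ol{\Sigma}} u)_{L^2} = -\|\nabla u\|_{L^2}^2 \le 0,
\]
which simultaneously confirms $Dom(\cl{L}) \subset H^1(\ol{\Sigma}, dV_{\ol{g}})$.

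\textbf{Main obstacle.} The only non-routine point is the essential self-adjointness on a complete (but noncompact) manifold; everything else is bookkeeping with cutoffs and integration by parts. I would handle this by citing the classical result rather than reproving it, since it is well-documented in the references listed.
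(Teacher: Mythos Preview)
Your proposal is correct and, in fact, more detailed than the paper's own treatment: the paper does not prove this proposition at all, but simply states it as a recall of standard facts about the Laplace--Beltrami operator on a complete manifold and refers the reader to \cite{RS1}, \cite{RS4}, \cite{Urakawa93}. Your sketch (essential self-adjointness via Gaffney/Chernoff on a complete manifold, non-positivity via a cutoff argument) is exactly the content those references would supply, so you are on the same track, just filling in what the paper omits.

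One small remark on your domain identification: the phrasing about ``bounded regions'' and ``asymptotic region'' is more complicated than necessary. Since $|\ol{A}|^2$ is smooth and globally bounded on $\ol{\Sigma}$ (it decays like $|\mathbf{x}|^{-2}$ at infinity), multiplication by $|\ol{A}|^2$ is a bounded operator on $L^2(dV_{\ol{g}})$. Hence for any $u\in L^2$ one has $|\ol{A}|^2 u\in L^2$ automatically, and the equivalence $\cl{L}u\in L^2 \iff \Delta_{\ol{\Sigma}}u\in L^2$ is immediate. This also makes the containment $Dom(\cl{L})\subset H^1$ transparent from your cutoff computation.
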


\begin{lem} \label{negEssSpec}
	Multiplication by $| \ol{A} |^2$ is relatively compact with respect to $\Delta_{\ol{\Sigma}} : Dom( \cl{L} ) \to L^2( dV_{\ol{g}})$.
	
	In particular, 
	$\cl{L} : Dom( \cl{L} ) \to L^2( dV_{\ol{g}} )$ is self-adjoint and 
	the essential spectrum satisfies
		$$\sigma_{ess} ( \cl{L} ) = \sigma_{ess} ( \Delta_{\ol{\Sigma}}) \subset ( -\infty, 0]$$
\end{lem}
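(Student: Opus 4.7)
The plan is to reduce everything to proving that multiplication by $|\ol{A}|^2$ is relatively $\Delta_{\ol{\Sigma}}$-compact, at which point the functional-analytic claims follow from standard results. Concretely, once the relative compactness is established, boundedness of $|\ol{A}|^2$ makes multiplication by it a bounded symmetric operator on $L^2(dV_{\ol g})$, so the Kato-Rellich theorem makes $\cl{L}$ self-adjoint on $\text{Dom}(\Delta_{\ol\Sigma})$, and Weyl's theorem on stability of the essential spectrum under relatively compact symmetric perturbations yields $\sigma_{ess}(\cl{L}) = \sigma_{ess}(\Delta_{\ol\Sigma})$. The containment $\sigma_{ess}(\Delta_{\ol\Sigma}) \subset \sigma(\Delta_{\ol\Sigma}) \subset (-\infty, 0]$ is immediate from the non-positivity of $\Delta_{\ol\Sigma}$ already recorded in the preceding proposition.

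The decisive analytic input is that $|\ol{A}|^2$ is bounded on $\ol\Sigma$ and that $|\ol{A}|^2(z) \to 0$ as $z \to \infty$ on $\ol\Sigma$. This decay is a consequence of the asymptotics of $\ol{Q}$ at infinity (remark \ref{asympsQ}): because $\ol\Sigma$ is asymptotic to the Simons cone, the expansion of $\ol Q$ together with matching estimates for $\ol Q'$ and $\ol Q''$ combine with the curvature formulas of section \ref{sectParams} to give each principal curvature of $\ol\Sigma$ size $O(r^{-1})$ at infinity, so $|\ol{A}|^2 = O(r^{-2})$. Lemma \ref{equivDist} translates this $r$-decay into decay in the intrinsic distance on $\ol\Sigma$, and smoothness gives a global bound.

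With these estimates in hand, the compactness argument is standard. Let $\{u_i\} \subset \text{Dom}(\cl{L})$ be a sequence with $\|u_i\|_{L^2} + \|\Delta_{\ol\Sigma} u_i\|_{L^2}$ uniformly bounded. Interior elliptic regularity on the complete Riemannian manifold $\ol\Sigma$ yields uniform $H^2_{loc}$ bounds, so Rellich-Kondrachov compactness on geodesic balls combined with a diagonal extraction along an exhaustion of $\ol\Sigma$ produce a subsequence converging in $L^2_{loc}(\ol\Sigma)$. For any $\epsilon > 0$ pick $R_\epsilon$ with $|\ol A|^2 < \epsilon$ outside $B_{R_\epsilon}$; then
\begin{equation*}
\bigl\| |\ol{A}|^2 (u_i - u_j) \bigr\|_{L^2}^2 \le \bigl\| |\ol{A}|^2 \bigr\|_\infty^2 \, \| u_i - u_j \|_{L^2(B_{R_\epsilon})}^2 + \epsilon^2 \, \| u_i - u_j \|_{L^2}^2 ,
\end{equation*}
whose first summand vanishes along the subsequence and whose second is uniformly $O(\epsilon^2)$, giving that $\{|\ol{A}|^2 u_i\}$ is Cauchy in $L^2$.

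The step I expect to be least routine is securing the pointwise decay of $|\ol{A}|^2$ with sufficient precision — in particular controlling $\ol{Q}''$ (rather than only $\ol Q$ and $\ol Q'$) at infinity. The zeroth-order asymptotic $\ol Q(r) = r + O(r^{-|\alpha|})$ alone is not enough; one must also invoke the matched derivative estimates from the construction of $\ol\Sigma$, or differentiate the minimal surface ODE, to see that $\ol Q''$ decays. Once this is pinned down, the remainder of the argument is classical spectral and elliptic theory.
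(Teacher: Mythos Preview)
Your proof is correct and follows essentially the same approach as the paper: both rely on the decay $|\ol{A}|^2 = O(r^{-2})$ together with Rellich compactness on compact subsets of $\ol\Sigma$, then invoke standard perturbation theory (the paper cites Reed--Simon, Corollary~2 of Ch.~XIII.4). The only cosmetic difference is that the paper packages the compactness as an operator-norm approximation of $|\ol{A}|^2$ by compactly supported truncations $\eta_n|\ol{A}|^2$, whereas you extract an $L^2_{loc}$-convergent subsequence directly and then do a tail estimate --- these are equivalent standard variants.
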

\begin{proof}
	Let $f : [0, \infty) \to [0,1]$ be a smooth, nonincreasing bump function which is identically $1$ on $[0,1]$ and supported in $[0,2]$.
	Define a sequence of functions 
		$$ \eta_n : \ol{\Sigma} \to \mathbb{R}$$
		$$ \eta_n ( \mathbf x , \mathbf y) = f \left( \frac{ | \mathbf x | }{ n } \right)$$
	Note that
		$$\eta_n | \ol{A} |^2 \xrightarrow[n \to \infty]{L^\infty} | \ol{A} |^2$$
	since $ | \ol{A} |^2 \le C ( 1 + | \mathbf x |)^{-2}$.
	Moreover, it follows from lemma \ref{equivDist} that 
		$$\sup_{n \in \mathbb{N} } \| \eta_n \|_{\infty} + \| \nabla^{\ol{\Sigma}} \eta_n \|_\infty < \infty$$ 

	To prove the relative compactness of $| \ol{A} |^2$,
	let $u_k \in Dom( \cl{L} )$ be a sequence with 
		$$ ||| u_k |||^2 \doteqdot \|  u_k \|_2^2 + \|  \Delta_{\ol{\Sigma}} u_k \|_2^2$$
	uniformly bounded.
	Then $u_k$ and $\eta_n | \ol{A} |^2 u_k$ are uniformly bounded in $H^1( \ol{\Sigma}, dV_{\ol{g}} )$.
	By the choice of $\eta_n$, for each $n$, there exists a compact subset $\Omega_n \subset \ol{\Sigma}$ such that
	$\eta_n | \ol{A}|^2 u_k$ is supported in $\Omega_n$ for all $k$.
	Therefore, the Rellich theorem 
	implies that, for each $n \in \mathbb{N}$, there exists subsequence $k_j$ such that
	$\eta_n | \ol{A} |^2 u_{k_j}$ converges in $L^2( \ol{\Sigma}, dV_{\ol{g}} )$.
	In other words,
		$$\eta_n | \ol{A} |^2 : ( Dom( \cl{L} ) , ||| \cdot ||| ) \to ( L^2, \| \cdot \|_2 )$$
	is a compact operator for all $n$.
	
	Let $\Omega_n' \subset \ol{\Sigma}$ denote the complement of the domain on which $\eta_n \equiv1$.
	Observe 
			$$\| ( 1 - \eta_n ) | \ol{A} |^2  u \|_{L^2} 
			\le \| 1 - \eta_n \|_{L^\infty} \| \ol{A} \|_{L^\infty(\Omega_n')}^2 \| u \|_{L^2} 
			\le \| 1 - \eta_n \|_{L^\infty} \| \ol{A} \|_{L^\infty(\Omega_n')}^2 ||| u |||$$
	From the spatial decay of $|\ol{A}|^2$, 
	it then follows that the operators $\eta_n  | \ol{A} |^2$ converge in norm to $| \ol{A} |^2$.
	Therefore, $| \ol{A} |^2 : ( Dom( \cl{L} ) , ||| \cdot ||| ) \to ( L^2, \| \cdot \|_2 )$ is also compact
	or, equivalently, $| \ol{A} |^2$ is relatively compact with respect to $\Delta_{\ol{\Sigma}}$.
	
	The last statement of the lemma is a standard fact about relatively compact perturbations of self-adjoint operators (see for example Corollary 2 in Ch. XIII.4 of ~\cite{RS4}).
\end{proof}

Using the fact that the kernel element $( \mathbf x, \mathbf y) \cdot \ol{\nu}$ is nonvanishing in high dimensions, we can refine our understanding of the spectrum of $\cl{L}$. 
This next result is also mentioned in \cite{Alencar93} and \cite{ABPRS05}, and it alternatively follows from the results in \cite{FCS80}.

\begin{thm} \label{JacobiNonpos}
	If $n \ge 4$, then
	$\cl{L} \le 0$ and 
	the spectrum of $\cl{L}$ satisfies 
		$\sigma( \cl{L} ) \subset ( - \infty, 0]$.
\end{thm}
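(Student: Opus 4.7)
The strategy is to exploit the positive kernel element $u_0(z) = \langle z, \ol{\nu}(z)\rangle$ of $\cl{L}$ furnished by Corollary \ref{posKernelElt} via a Fischer--Colbrie--Schoen-type substitution, which converts the existence of a positive Jacobi field into nonpositivity of the associated quadratic form. The hypothesis $n \ge 4$ will enter only through the positivity of $u_0$.

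First I would verify the classical identity $\cl{L} u_0 = 0$. Writing $u_0 = \sum_\alpha z^\alpha \ol{\nu}^\alpha$ and applying the Leibniz rule componentwise in any local orthonormal frame $\{e_i\}$ on $\ol{\Sigma}$,
\begin{equation*}
\Delta_{\ol{\Sigma}} u_0 = \langle \Delta_{\ol{\Sigma}} z, \ol{\nu}\rangle + 2 \sum_i \langle \nabla_{e_i} z, \nabla_{e_i} \ol{\nu}\rangle + \langle z, \Delta_{\ol{\Sigma}} \ol{\nu}\rangle.
\end{equation*}
Minimality of $\ol{\Sigma}$ gives $\Delta_{\ol{\Sigma}} z = \ol{H} \ol{\nu} = 0$; Simons' identity collapses to $\Delta_{\ol{\Sigma}} \ol{\nu} = -|\ol{A}|^2 \ol{\nu}$; the mixed term equals $-2\ol{H} = 0$ via $\nabla_{e_i} z = e_i$ together with the Weingarten relation $\nabla_{e_i} \ol{\nu} = -\sum_j \ol{A}_{ij} e_j$. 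Combining these yields $\Delta_{\ol{\Sigma}} u_0 = -|\ol{A}|^2 u_0$, i.e., $u_0 \in \ker \cl{L}$.

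Next, for any $\phi \in C^\infty_c(\ol{\Sigma})$, set $\psi \doteqdot \phi / u_0$; this is smooth and compactly supported since $u_0 > 0$ by Corollary \ref{posKernelElt}. Expanding $\phi = \psi u_0$ and integrating the identity $\psi^2 u_0 \Delta_{\ol{\Sigma}} u_0 = -|\ol{A}|^2 \phi^2$ by parts (boundary terms vanish by compact support) yields the standard Fischer--Colbrie--Schoen identity
\begin{equation*}
\int_{\ol{\Sigma}} \left( |\nabla \phi|^2 - |\ol{A}|^2 \phi^2 \right) dV_{\ol{g}} = \int_{\ol{\Sigma}} u_0^2 |\nabla \psi|^2 \, dV_{\ol{g}} \ge 0.
\end{equation*}
Thus $-(\cl{L}\phi, \phi)_{L^2} \ge 0$ on $C^\infty_c(\ol{\Sigma})$. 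Self-adjointness from Lemma \ref{negEssSpec} combined with density of $C^\infty_c(\ol{\Sigma})$ in the form domain (via standard cutoffs exploiting the spatial decay of $|\ol{A}|^2$) promotes this inequality to the entire form domain, giving $\cl{L} \le 0$ and hence $\sigma(\cl{L}) \subset (-\infty, 0]$.

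I do not anticipate a serious obstacle. The Jacobi-field identity $\cl{L} u_0 = 0$ is classical, the substitution is purely algebraic once positivity of $u_0$ is in hand, and both inputs have already been assembled in earlier parts of the paper. If any care is required, it is in the form-core assertion on the complete noncompact manifold $\ol{\Sigma}$, but the decay of $|\ol{A}|^2$ recorded in the proof of Lemma \ref{negEssSpec} is more than enough to carry that out.
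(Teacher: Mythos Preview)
Your proof is correct and follows the Fischer--Colbrie--Schoen route that the paper itself flags as an alternative (the sentence preceding the theorem cites \cite{FCS80}). The paper's own argument is different: it proceeds by contradiction, supposing $\lambda \doteqdot \sup_{\|u\|_2=1}(u,\cl{L}u)_{L^2} > 0$, invoking Lemma~\ref{negEssSpec} and the min-max principle to conclude $\lambda$ is an isolated eigenvalue of finite multiplicity, then using nondegeneracy of the ground state to see that the eigenspace is one-dimensional and spanned by a nonnegative, $O(n)\times O(n)$-invariant eigenfunction $u$. Writing $u = u_0 w$ and applying the maximum principle to the resulting equation for $w$ (which has zeroth-order coefficient $-\lambda < 0$) shows $w$ is nondecreasing in $|\mathbf x|$, contradicting $u = u_0 w \in L^2$.

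Your approach is more direct and general: it bypasses the spectral-theoretic machinery (min-max, ground-state nondegeneracy) and the use of $O(n)\times O(n)$ symmetry, working at the level of the quadratic form via the substitution $\phi = \psi u_0$. The price is the form-core density step you identify, which is indeed routine here since $|\ol{A}|^2$ is bounded and $\ol{\Sigma}$ is complete. The paper's approach trades that density argument for an eigenfunction that is automatically in $Dom(\cl{L})$, but leans on more structure (the essential-spectrum result of Lemma~\ref{negEssSpec} and the symmetry) to get there.
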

\begin{proof}
	Suppose for the sake of contradiction that
		$$ \lambda \doteqdot \sup_{u \in Dom( \cl{L} ), \| u \|_2 = 1} ( u , \cl{L} u)_{L^2}  \in ( 0, \| \ol{A} \|_\infty^2 ]$$
	By the min-max principle (see e.g. \cite{RS4}, chapter XIII, section 1)
	and lemma \ref{negEssSpec}, $\lambda$ is an eigenvalue of $\cl{L}$ of finite multiplicity
	and we let
		$$V_\lambda \doteqdot \{  u \in Dom ( \cl{L} )  | \cl{L} u = \lambda u \}$$
	denote the corresponding finite-dimensional eigenspace.
	
	By nondegeneracy of the ground state, all nonzero elements of $V_\lambda$ have a sign.
	It follows that $V_\lambda$ is one-dimensional.
	Indeed, if not, then we could find nonnegative functions $u, v \in V_{\lambda}$ such that 
		$$0 = ( u, v)_{L^2} \qquad  \text{ and } \qquad \| u \|_2 = \| v \|_2 = 1$$
	which is impossible.
	
	Therefore, say $V_\lambda$ is the span of $u \ge 0$.
	By standard elliptic regularity theory, $u$ is smooth and satisfies $\Delta_{\ol{\Sigma}} u + | \ol{A} |^2 u = \lambda u$ in the classical sense.
	Moreover, $u$ is $O(n) \times O(n)$-invariant since $V_\lambda$ is one-dimensional and $\cl{L}$ is $O(n) \times O(n)$-invariant.
	
	Let $u_0 :  \ol{\Sigma} \to \mathbb{R}$ be defined by $u_0 \doteqdot ( \mathbf x, \mathbf y) \cdot \ol{\nu} > 0$ as in corollary \ref{posKernelElt}.
	Note that $u_0 \sim | \mathbf x |^{\alpha}$ at infinity (see remark \ref{asympsu_0}) and hence is not in $L^2( dV_{\ol{g}} )$.
	However, by recognizing $u_0$ as the normal component of the infinitesimal generator of dilation 
	or by a direct computation,
		$$\Delta_{\ol{\Sigma}} u_0 + | \ol{A} |^2 u_0 = 0$$
	Now, write $u = u_0 w$ for some $O(n) \times O(n)$-invariant function $w \ge 0$.
	It follows that $w$ satisfies
		$$\Delta_{\ol{\Sigma}} w + 2 \left \langle \frac{ \nabla^{\ol{\Sigma}} v}{v} , \nabla^{\ol{\Sigma}} w \right \rangle_{g_{\ol{\Sigma}}} - \lambda w = 0$$
	Because $- \lambda < 0$, the maximum principle applies and
		$\sup_{\Omega} w  = \sup_{\partial \Omega} w$
	for any $\Omega \subset \ol{\Sigma}$.
	In particular, regarding $w$ as a function of $| \mathbf x|$
	and taking $O(n) \times O(n)$-invariant subsets $\Omega$, 
	it follows that $w$ is then a nondecreasing function of $|\mathbf x|$.
	This however contradicts that $u = u_0 w \in L^2$ and $u \not \equiv 0$.
	Therefore, we conclude that $\cl{L} \le 0$ and $\sigma(\cl{L}) \subset ( -\infty, 0]$.
\end{proof}

\subsection{$C^l_a$ Theory}

In this subsection, we introduce weighted $C^l$ norms on $\ol{\Sigma}$ to capture spaces of functions with suitable decay at infinity.

\subsubsection{Elliptic Estimates}
\begin{definition}
	For $a \in \mathbb{R}$, define the weighted norm $\| \cdot \|_{C^0_a( \ol{\Sigma} )}$ of a tensor $T$ on $\ol{\Sigma}$ by
		$$ \| T \|_{C^0_a( \ol{\Sigma} ) } \doteqdot \sup_{z \in \ol{\Sigma} } ( 1 + |z| )^a | T(z) |_{\ol{g}}$$
	and, for $0 < \beta < 1$, the semi-norm 
	\comment{not sure if I have the seminorm correct. See p. 4 of Deruelle-Kroencke or p. 12 of Chodosh-Schulze or p. 6 of Conlon-Hein}
		$$[ T ]_{C^{\beta}_a ( \ol{\Sigma} ) } 
		\doteqdot \sup_{0 <  dist_{\ol{\Sigma}}(z_1, z_2) \le 1 } 
		\min \left \{ \left( 1 + | z_1 | \right)^{a + \beta} , \left( 1 + | z_2 | \right)^{a + \beta} \right \} 
		\frac{ | T(z_1) - T(z_2) |_{\ol{g}} }{dist_{\ol{\Sigma}}( z_1 , z_2 )^\beta}$$
		
	For $l \ge 0$ and a function $u: \ol{\Sigma} \to \mathbb{R}$, define
		$$\| u \|_{C^l_a( \ol{\Sigma} )} \doteqdot \sum_{j = 0}^{\lfloor l \rfloor } \left \| \nabla^j_{\ol{\Sigma}} u \right\|_{C^0_{a+j} ( \ol{\Sigma}) }
		+ \left[ \nabla^{ \lfloor l \rfloor}_{\ol{\Sigma}} u \right]_{C^{ l - \lfloor l \rfloor}_{a + \lfloor l \rfloor}}$$
	where it is understood that the semi-norm term is omitted if $l \in \mathbb{N}$.
	Finally, define
		$$C^l_a ( \ol{\Sigma} ) \doteqdot \left \{ u : \ol{\Sigma} \to \mathbb{R} \, : \, \| u \|_{C^l_a( \ol{\Sigma} )} < \infty \right\}
		\qquad
		\text{ and } \qquad C^\infty_a \doteqdot \bigcap_{l \ge 0} C^l_a( \ol{\Sigma} )$$
\end{definition}

\comment{being sloppy about the norm $|x|$ now b/c have equivalences of a bunch of distance functions}
\begin{remark} \label{equivNorm}
	By lemma \ref{equivDist}, 
	we obtain equivalent norms if we replace $|  z|$ with
	$| \mathbf x |$, $dist_{\Sigma} (  z, \Sigma_{\mathbf 0 } )$, or $dist_{\mathbb{E}^{2n}} (  z, \Sigma_{\mathbf 0} )$.
\end{remark}

\comment{note that I use the gradient intrinsic to $\ol{\Sigma}, \ol{g}$. All the theory is qualitatively the same if we use the full gradient since $| \nabla X |^2 = | \ol{\nabla} X^t |^2 + |A|^2 |X^\perp|^2$.}

\begin{prop} \label{weightedSchauderEstElliptic}
	For any $l \ge 2$ and any $a \in \mathbb{R}$, $\Delta_{\ol{\Sigma}} + | \ol{A} |^2$ is a bounded operator $C^l_a \to C^{l-2}_{a+2}$.
\end{prop}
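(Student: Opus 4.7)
I would split $\cl{L} = \Delta_{\ol{\Sigma}} + |\ol{A}|^2$ into its two summands and show each is a bounded operator $C^l_a \to C^{l-2}_{a+2}$. The common input is the \emph{asymptotically conical} geometry of $\ol{\Sigma}$: since $\ol{\Sigma}$ approaches the Simons cone at a polynomial rate (remark \ref{asympsQ}), combining the explicit formulas for $\ol{g}$ and $\ol{A}$ from the appendix with the distance equivalences of lemma \ref{equivDist} and remark \ref{equivNorm} yields the decay estimates
\begin{equation*}
\bigl|\nabla^p_{\ol{\Sigma}} |\ol{A}|^2\bigr|_{\ol{g}} \lesssim_p (1+|z|)^{-2-p},
\qquad
\bigl|\nabla^p_{\ol{\Sigma}} \ol{Rm}\bigr|_{\ol{g}} \lesssim_p (1+|z|)^{-2-p}
\end{equation*}
for every $p \ge 0$, together with matching weighted H\"older seminorm bounds on the top-order terms.

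\textbf{Multiplicative term.} The Leibniz rule gives
\begin{equation*}
\nabla^j_{\ol{\Sigma}}\bigl(|\ol{A}|^2 u\bigr) = \sum_{i=0}^{j} \binom{j}{i}\, \nabla^i_{\ol{\Sigma}}|\ol{A}|^2 \otimes \nabla^{j-i}_{\ol{\Sigma}} u,
\end{equation*}
and each summand is pointwise bounded by a constant times $(1+|z|)^{-2-i}(1+|z|)^{-a-(j-i)}\|u\|_{C^l_a} = (1+|z|)^{-a-2-j}\|u\|_{C^l_a}$, exactly the weight required for the $C^0_{a+2+j}$ estimate. The top-order H\"older seminorm is controlled by the same identity together with the product inequality $[fg]_{C^\beta_{a_f+a_g}} \lesssim \|f\|_{C^0_{a_f}} [g]_{C^\beta_{a_g}} + [f]_{C^\beta_{a_f}} \|g\|_{C^0_{a_g}}$, valid because $(1+|z_1|) \sim (1+|z_2|)$ whenever $\mathrm{dist}_{\ol{\Sigma}}(z_1, z_2) \le 1$.

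\textbf{Laplacian term.} The pointwise bound $|\Delta_{\ol{\Sigma}} u|_{\ol{g}} \le (2n-1)|\nabla^2_{\ol{\Sigma}} u|_{\ol{g}}$ immediately gives the $C^0_{a+2}$ estimate. For $j \ge 1$, iterating the Ricci identity $[\nabla_i, \nabla_j]T = \ol{Rm} \ast T$ expresses $\nabla^j_{\ol{\Sigma}} \Delta_{\ol{\Sigma}} u$ as the trace $\mathrm{tr}_{\ol{g}} \nabla^{j+2}_{\ol{\Sigma}} u$ plus contracted products of the schematic form $\nabla^p_{\ol{\Sigma}} \ol{Rm} \ast \nabla^q_{\ol{\Sigma}} u$ with $p + q \le j$. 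Each such term is pointwise dominated by $(1+|z|)^{-2-p}(1+|z|)^{-a-q}\|u\|_{C^l_a} \le (1+|z|)^{-a-2-j}\|u\|_{C^l_a}$, and the top-order H\"older contribution follows from the same product inequality.

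\textbf{Main obstacle.} The principal technical burden is verifying the all-order decay estimates on $\nabla^p|\ol{A}|^2$ and $\nabla^p \ol{Rm}$ from the asymptotics of $\ol{Q}$. I would do this by induction on $p$, using the $O(n) \times O(n)$-symmetric expressions for $\ol{g}$ and $\ol{A}$ in the appendix together with the polynomial convergence of $\ol{Q}$ to the linear Simons cone profile and the scale invariance of that cone. Once these intrinsic geometric bounds are in place, the proposition reduces to bookkeeping with the Leibniz rule and the Ricci identity in weighted H\"older spaces.
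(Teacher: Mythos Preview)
Your decomposition into $\Delta_{\ol{\Sigma}}$ and multiplication by $|\ol{A}|^2$ is exactly the paper's approach, and the multiplication argument via Leibniz and the decay $|\nabla^p|\ol{A}|^2| \lesssim (1+|z|)^{-2-p}$ is correct and matches what the paper has in mind when it appeals to ``the asymptotics of $|\ol{A}|^2$ and its derivatives.''

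For the Laplacian, however, you are working harder than necessary. Since $\nabla g = 0$, the tensor $\nabla^j_{\ol{\Sigma}}\Delta_{\ol{\Sigma}} u$ is \emph{already} a metric contraction of $\nabla^{j+2}_{\ol{\Sigma}} u$: in components,
\[
(\nabla^j \Delta u)_{c_1\cdots c_j} \;=\; g^{ab}\,(\nabla^{j+2} u)_{c_1\cdots c_j\, a\, b},
\]
so $|\nabla^j \Delta u| \le (2n-1)\,|\nabla^{j+2} u|$ pointwise, and the same holds for the H\"older seminorm. No commutation, Ricci identity, or curvature decay enters. This is why the paper disposes of the Laplacian in one line (``clear from the definition of the weighted norms''). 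The detour you take---commuting $\nabla^j$ past $\Delta$ to write things as $\Delta\nabla^j u$ plus curvature corrections---is valid but superfluous, and it leads you to flag as the ``main obstacle'' a curvature-decay verification that the Laplacian half of the argument does not actually require. The only genuine geometric input is the decay of $\nabla^p|\ol{A}|^2$, which feeds into the multiplication operator.
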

\begin{proof}
	It is clear from the definition of the weighted norms that $\Delta_{\ol{\Sigma}} : C^l_a \to C^{l-2}_{a+2}$ is bounded.
	The asymptotics of $| \ol{A} |^2$ and its derivatives imply that multiplication by $| \ol{A} |^2$ is a bounded operator $C^l_a \to C^{l-2}_{a+2}$.
\end{proof}

\subsubsection{Parabolic Estimates}


\begin{definition}
	Let $I \subset \mathbb{R}$.
	For $a \in \mathbb{R}$, define the weighted norm $ \| \cdot \|_{C^0_a( \ol{\Sigma} \times I )}$
	of a tensor $T : \ol{\Sigma} \times I \to (T^*)^p ( \ol{\Sigma} ) \times T^q ( \ol{\Sigma})$ by
		$$ \| T \|_{C^0_a( \ol{\Sigma} \times I ) } \doteqdot \sup_{(z,t) \in \ol{\Sigma} \times I } ( 1 + |z| )^a | T(z,t) |_{\ol{g}}$$
	and, for $0 < \beta < 1$, the semi-norm
	\begin{equation*} \begin{aligned}
	& [ T ]_{C^{\beta}_a ( \ol{\Sigma} \times I ) } \\
		\doteqdot & \sup_{0 < dist_{\ol{\Sigma}}(z_1, z_2) + |t_2 - t_1| \le 1 } 
		\min \left \{ \left( 1 + | z_1 | \right)^{a + \beta} , \left( 1 + | z_2 | \right)^{a + \beta} \right \} 
		\frac{ | T(z_1, t_1) - T(z_2, t_2) |_{\ol{g}} }{  dist_{\ol{\Sigma}}( z_1 , z_2 )^\beta + | t_2 - t_1|^{\beta/2}}
	\end{aligned} \end{equation*}

	For $l \ge 0$, define
		$$\| u \|_{C^l_a ( \ol{\Sigma} \times I )} \doteqdot \sum_{2i + j \le \lfloor l \rfloor } \left \|  \nabla_{\ol{\Sigma}}^j \partial_t^i u \right \|_{C^0_{a+ 2i + j} ( \ol{\Sigma} \times I )} 
		+ \sum_{2i + j = \lfloor l \rfloor } \left[ \nabla_{\ol{\Sigma}}^j \partial_t^i u \right]_{C^{ l - \lfloor l \rfloor}_{a + \lfloor l \rfloor} ( \ol{\Sigma} \times I )} $$
\end{definition}

\subsection{The Generalized Kernel}
In this subsection, we only consider functions $u = u( | \mathbf x |)$ of $| \mathbf x |$.
Let $r = | \mathbf x |$ and
use $'$ to denote $\frac{d}{dr}$.
By computations done in appendix \ref{appendixComputations},
the Jacobi operator $\Delta_{\ol{\Sigma}} + | \ol{A} |^2$ at the minimal surface $\ol{\Sigma}$
acting on such functions $u : \ol{\Sigma} \to \mathbb{R}$ becomes
	$$\frac{ u''}{ 1 + Q'^2} + \frac{ n-1}{ r} u' 
	+ \frac{1}{ 1 + Q'^2} \left[ \left( \frac{ Q''}{1 + Q'^2} \right)^2 + (n-1) \frac{ Q'^2}{ r^2} + \frac{ n-1}{Q^2} \right] u$$
or after multiplying by $1 + Q'^2$
	$$L u \doteqdot u'' + \frac{(n-1) ( 1 + Q'^2)}{r } u' + \left[ \left( \frac{ Q''}{1 + Q'^2} \right)^2 + (n-1) \frac{ Q'^2}{ r^2} + \frac{ n-1}{Q^2} \right] u$$
Here, and throughout this subsection, $Q = \ol{Q}$ is the profile function of the minimal surface $\ol{\Sigma}$.
In divergence form, the operator $L$ may be written as
	$$L u = \frac{ 1}{ \cl{J} } \frac{d}{dr} \left( \cl{J} u' \right) + V u$$
where
	$$\cl{J}(r) = \frac{ r^{n-1} Q^{n-1} }{ \sqrt{ 1 + Q'^2} } \quad \text{ and } \quad
	V(r) = \left[ \left( \frac{ Q''}{1 + Q'^2} \right)^2 + (n-1) \frac{ Q'^2}{ r^2} + \frac{ n-1}{Q^2} \right]$$
	
\begin{remark} \label{asympsQ}
	Near $r = \infty$, $Q(r)$ has the asymptotics
		$$Q(r) = \ol{Q}_b(r) = r + C_b r^{\alpha} + O(r^{\alpha -2})$$
	where $C_a > 0$.
	These asymptotics propagate to derivatives of $Q(r)$.
	Near $r = 0$
		$$Q(r) = \ol{Q}_b(r) = b + \frac{ n-1}{2n b} r^2 + O(r^4)$$
	It follows that 
	\begin{equation*}
	\begin{array}{lll}
		\cl{J}(r) \sim r^{2n-2},	& V(r) \sim \frac{1}{r^2}		 &\text{ at } r = \infty		\\
		\cl{J}(r) \sim r^{n-1},	& V(r) \sim 1				 &\text{ at } r = 0
	\end{array} \end{equation*}
	In particular, since $\cl{J}(r) > 0$, there exist positive constants $0 < c < C$ such that
		$$c ( 1 + r)^{n-1} r^{n-1} \le \cl{J}(r) \le C ( 1 + r)^{n-1} r^{n-1}$$
\end{remark}	
	
When $n \ge 4$, $u_0 \doteqdot  ( \mathbf x, \mathbf y) \cdot \ol{\nu}>0$ is a positive function of $r= | \mathbf x|$ that solves $Lu_0 = 0$.
We may therefore proceed to define the following factorization formula for $L$.
Define
	$$W \doteqdot \frac{ d }{dr} \left( \log u_0 \right) = \frac{ u_0' }{u_0}$$
	$$A u  \doteqdot - u' + W u 		\qquad A^* u \doteqdot \frac{1}{ \cl{J} } \frac{ d}{dr} \left( \cl{J} u \right) + W u$$
Then
	$$\frac{1}{ \cl{J} } \frac{ d }{dr} \left( \cl{J} u' \right) + V u = - A^* A u $$
Indeed,
	\begin{equation*} \begin{aligned}
		- A^* A u 
		&= A^* \left(  u' - W u \right) \\
		&= \frac{ 1 }{ \cl{J} } \frac{ d}{dr} ( \cl{J} u' ) + W u' - \frac{ 1}{ \cl{J} } \frac{ d }{dr} \left( \cl{J} W u \right) - W^2 u \\
		&= \frac{ 1 }{ \cl{J} } \frac{ d}{dr} ( \cl{J} u' ) + W u' 
		- W u' - W' u - \frac{ \cl{J}' }{\cl{J}} W u - W^2 u 		\\
		&= \frac{ 1 }{ \cl{J} } \frac{ d}{dr} ( \cl{J} u' ) - u \left( W' + \frac{ \cl{J}' }{ \cl{J} } W + W^2 \right)
	\end{aligned} \end{equation*}
and
	\begin{equation*} \begin{aligned}
		\left( W' + \frac{ \cl{J}' }{ \cl{J} } W + W^2 \right)
		&= \frac{ u_0''}{u_0} - \frac{ u_0'^2}{u_0^2} + \frac{ \cl{J}' }{ \cl{J} } \frac{ u_0' }{ u_0} + \frac{ u_0'^2}{ u_0^2} \\
		&= \frac{ u_0'' + \cl{J}' u_0' }{ u_0} \\
		&= -V(r)
	\end{aligned} \end{equation*}

\begin{remark} \label{asympsu_0}
	Near $r = \infty$,
		$$u_0 = \frac{ (1-\alpha) C_a }{\sqrt{2}} r^\alpha + O(r^{\alpha -2}) \quad \text{ and } \quad
		W = \frac{ \alpha}{r}  + O(r^{-3}) < 0$$
	Near $r = 0$, 
		$$u_0 \sim 1		\quad \text{ and } \quad W = O(r)$$
	In particular, since $u_0 > 0$, there exist positive constants $0 < c < C$ such that
		$$c (1 +r)^\alpha \le u_0(r) \le C (1+r)^{\alpha}$$
\end{remark}
	
\begin{lem} \label{kernel}
	The space of solutions to 
		$$Lu =\frac{ 1}{ \cl{J} } \frac{ d}{dr} \left( \cl{J} u \right) + W u = 0$$
	is two dimensional and spanned by solutions $u_0 =  (\mathbf x, \mathbf y) \cdot \ol{\nu}$ and $v_0$.
	Moreover, $u_0, v_0$ have the following asymptotics
	\begin{gather*}
		u_0(r) \sim
		\left\{ \begin{array}{cl} 
			1			& 	\text{ at } r = 0 \\
			r^{\alpha_+}	&	\text{ at } r = \infty \\
		\end{array} \right.
	\end{gather*}
	\begin{gather*}
		v_0(r) \sim
		\left\{ \begin{array}{cl} 
			r^{-(n-2)}			& 	\text{ at } r = 0 \\
			r^{\alpha_-}	&	\text{ at } r = \infty \\
		\end{array} \right.
	\end{gather*}
	where
		$$\alpha_{\pm} = \frac{1}{2} \left( -(2n-3) \pm \sqrt{(2n-3)^2-8(n-1)} \right) < 0$$
	These asymptotics propagate to all derivatives.
	In particular, $u_0 \in C^\infty_{|\alpha|}$.
\end{lem}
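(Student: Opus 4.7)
The plan is to view $L u = 0$ as a second-order linear ODE on the interval $(0,\infty)$ with smooth coefficients---the singularities of $L$ sit only at the endpoints $r = 0$ and $r = \infty$---so that standard existence-uniqueness theory produces a two-dimensional solution space on any compact subinterval. Corollary \ref{posKernelElt} already supplies one solution $u_0 = \ol{Q}(r) - r\ol{Q}'(r)$ (the overall positive smooth factor $(1 + \ol{Q}'^2)^{-1/2}$ from the corollary does not affect $Lu_0 = 0$). A linearly independent second solution $v_0$ can then be manufactured by reduction of order, which takes the especially clean form
$$v_0(r) = u_0(r) \int_{r_*}^{r} \frac{ds}{u_0(s)^2\, \cl{J}(s)}$$
from the factorization $L = -A^*A$ together with $A u_0 = 0$, once one notes that $\cl{J}$ is precisely the integrating factor for the first-order coefficient of $L$; equivalently, the Wronskian identity $\bigl(\cl{J}(u_0 v' - u_0' v)\bigr)' = 0$ yields this formula directly.

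The asymptotics then follow by inserting the leading behavior of $u_0$ and $\cl{J}$ from Remarks \ref{asympsQ} and \ref{asympsu_0}. Near $r = 0$, one has $u_0 \sim 1$ and $\cl{J} \sim r^{n-1}$, so the integrand is of order $r^{-(n-1)}$. Since $n \ge 4$ this is non-integrable at the origin, and choosing the base point $r_*$ bounded away from $0$ gives an integral that blows up like $r^{-(n-2)}$, whence $v_0(r) \sim r^{-(n-2)}$. Near $r = \infty$, one has $u_0 \sim c\, r^{\alpha_+}$ and $\cl{J} \sim r^{2(n-1)}$, so the integrand is of order $r^{-2\alpha_+ - 2(n-1)}$; the exponent is strictly less than $-1$ because $\alpha_+$ is the larger root of the indicial equation $\alpha^2 + (2n-3)\alpha + 2(n-1) = 0$ and so $\alpha_+ > -(2n-3)/2$. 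The integral consequently converges at infinity, and taking $r_* = +\infty$ produces
$$v_0(r) \sim u_0(r) \cdot r^{1 - 2\alpha_+ - 2(n-1)} = r^{-\alpha_+ - (2n-3)} = r^{\alpha_-},$$
the last equality being Vieta's relation $\alpha_+ + \alpha_- = -(2n-3)$.

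Propagation of these asymptotics to all derivatives will come from differentiating the product $v_0 = u_0 w$ with $w'(r) = 1/(u_0^2 \cl{J})$ and feeding in the higher-order expansions of $\ol{Q}$ and $u_0$ already recorded in Remarks \ref{asympsQ} and \ref{asympsu_0}; equivalently one may substitute a formal Frobenius series $v_0 = r^{\alpha_-}\bigl(1 + \sum_{k\ge 1} c_k r^{-2k}\bigr)$ into $L v_0 = 0$, solve for the coefficients termwise, and justify the expansion in an asymptotic sense. The inclusion $u_0 \in C^\infty_{|\alpha|}$ is then immediate from the smoothness of $u_0$ on $\ol{\Sigma}$ combined with the decay $|\partial_r^j u_0| \lesssim (1+r)^{\alpha - j}$ at infinity. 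The principal technical nuisance I would anticipate is that the two natural contributions to $v_0'$ near infinity, namely $u_0' w$ and $-C/(u_0 \cl{J})$, both have leading order $r^{\alpha_- - 1}$, so confirming $v_0'(r) \sim \alpha_- r^{\alpha_- - 1}$ rather than an accidental cancellation requires using the next-order correction in the expansion of $u_0$; fortunately this is exactly the level of precision supplied by Remark \ref{asympsu_0}.
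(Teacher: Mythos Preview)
Your approach is correct and genuinely different from the paper's. The paper argues by approximating $Lu=0$ at each endpoint by its indicial Euler equation (namely $u'' + \frac{n-1}{r}u' = 0$ near $r=0$ and $u'' + \frac{2(n-1)}{r}u' + \frac{2(n-1)}{r^2}u = 0$ near $r=\infty$), invokes a fixed-point perturbation argument to produce bases of solutions with the prescribed leading asymptotics at each end, and then matches the two bases using the known behavior of $u_0$. Your route---reduction of order via the Abel/Wronskian identity $v_0 = u_0\int \frac{ds}{u_0^2\cl{J}}$---is more constructive and arguably cleaner: it gives $v_0$ explicitly from data already computed in Remarks \ref{asympsQ} and \ref{asympsu_0}, and the asymptotics drop out of a single integral estimate rather than two separate perturbation arguments plus a matching step. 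The paper's approach, on the other hand, handles the propagation of asymptotics to derivatives more uniformly (just ``standard ODE regularity''), whereas you correctly flag that your formula for $v_0'$ requires checking a non-cancellation at infinity.

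Two small points. First, your parenthetical remark that the factor $(1+\ol{Q}'{}^2)^{-1/2}$ ``does not affect $Lu_0=0$'' is not literally true: $L$ is a second-order differential operator, so multiplying a kernel element by a non-constant smooth function does not yield another kernel element. You should simply take $u_0 = (\mathbf{x},\mathbf{y})\cdot\ol{\nu}$ as defined, which satisfies $Lu_0=0$ since $L = (1+\ol{Q}'{}^2)(\Delta_{\ol{\Sigma}}+|\ol{A}|^2)$ on radial functions; the factor is bounded above and below by positive constants, so your asymptotic computations are unaffected. Second, be explicit that a \emph{single} choice of base point, namely $r_*=+\infty$, gives both claimed asymptotics for $v_0$: near $r=0$ the integral still blows up like $r^{-(n-2)}$, and since $u_0$ dominates $r^{\alpha_-}$ at infinity you cannot afford a nonzero constant of integration there.
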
	
\begin{proof}
	The asymptotics of the coefficients in the equation $Lu = 0$ imply that, near $r = 0$,
	$Lu = 0$ may be approximated by the Euler equation
		$$u'' + \frac{ n-1}{r} u' + \frac{ n-1}{r^2} u = 0 $$
	whose solutions are $$ C_0 + C_1 r^{-(n-2)}$$
	Similarly, near $r = \infty$, $Lu = 0$ may be approximated by the Euler equation
		$$ u'' + \frac{ 2(n-1)}{r} u' + \frac{ 2(n-1)}{r^2} u = 0$$
	whose solutions are $$C_+ r^{\alpha_+} + C_- r^{\alpha_-}$$
	
	By a standard fixed point theorem approach for perturbations of ordinary differential equations,
	it follows that there exists a basis $\tl{u}_0, \tl{v}_0$ of solutions to $Lu = 0$
	with the desired asymptotics to first order at $r = 0$.
	Since $u_0 \in C^0 ( \ol{\Sigma})$, it must be the case that $u_0 = C \tl{u}_0$.
	
	Similarly, there exists a basis of solutions $\hat{u}_0, \hat{v}_0$ with the desired asymptotics to first order at $r = \infty$.
	Since $u_0 \sim r^{\alpha_+}$ at $r = \infty$, we can perform a change of basis so that $\hat{u}_0 = u_0$.
	Thus, $\tl{v}_0 = C_1 u_0 + C_2 \hat{v}_0$.
	Taking $v_0 = \tl{v}_0 - C_1 u_0$ then yields the desired solution.
	
	Since $u_0, v_0$ satisfy the desired asymptotics to first order, 
	standard regularity theory for ordinary differential equations imply the claimed asymptotics and regularity for all derivatives of $u_0$ and $v_0$ with respect to $r$.
\end{proof}
	
We also have the following adjunction formula for suitably regular functions $u(r), v(r)$
	$$\int_0^\infty (A u ) v \cl{J} dr = \int_0^\infty u (A^* v) \cl{J} dr$$
	
From successively inverting $A$ and $A^*$,
we can define an inversion formula for the operator $L$.

\begin{definition} \label{inverseDefn}
Define inverse operators
	$$ (A^*)^{-1} f(r) = \frac{ 1}{ u_0(r) \cl{J}(r) } \int_0^r f(\rho) \cl{J}(\rho) u_0(\rho) d \rho$$
	\begin{equation*}
		A^{-1} f(r) =
		\left\{ \begin{array}{cl}
			u_0(r) \int_r^\infty \frac{ f(\rho)}{u_0 ( \rho) } d \rho, 	&		\text{if } \frac{f}{u_0} \text{ is integrable on $(0, \infty)$} \\
			-u_0(r) \int_0^r \frac{ f(\rho)}{ u_0(\rho) } d \rho,		&		\text{if } \frac{f}{u_0} \text{ is not integrable on $(0, \infty)$} \\
		\end{array} \right.
	\end{equation*}
	Finally, define
		$$ L^{-1}f = - A^{-1} (A^*)^{-1} f$$
\end{definition}

\begin{definition} \label{gendKernelDefn}
	Define the generalized kernel elements $\{ u_j \}_{j \in \mathbb{N}}$ inductively by
		$$u_j = L^{-1} \left( ( 1 + Q'^2) u_{j-1}\right) 		\qquad u_0 =  (\mathbf x, \mathbf y) \cdot \ol{\nu}$$
\end{definition}


\begin{lem} \label{gendKernelAsymps}
	For any $j \in \mathbb{N}$, there exist positive constants $0 < c_j < C_j$ such that
		$$c_j r^{2j} (1 + r)^{\alpha} \le u_j \le C_j r^{2j} ( 1 + r)^{\alpha }$$
\end{lem}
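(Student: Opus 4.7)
The plan is to induct on $j$. The base case $j = 0$ follows from Remark \ref{asympsu_0} (see also Lemma \ref{kernel}): $u_0 \sim 1$ at $r = 0$ and $u_0 \sim r^\alpha$ at $r = \infty$, equivalently $c_0 (1+r)^\alpha \le u_0(r) \le C_0(1+r)^\alpha$. For the inductive step, I would unpack
\[
u_j = -A^{-1}(A^*)^{-1}\!\big((1+Q'^2)u_{j-1}\big)
\]
in two stages. Because $Q'(0) = 0$ and $Q'(r) \to 1$ as $r \to \infty$, the factor $1+Q'^2$ is bounded above and below by positive constants, so $(1+Q'^2)u_{j-1}$ inherits the same two-sided asymptotics as $u_{j-1}$.

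First I would estimate $g_j := (A^*)^{-1}((1+Q'^2)u_{j-1})$ via the explicit formula in Definition \ref{inverseDefn}. Using the asymptotics $\mathcal{J}(r) \sim r^{n-1}(1+r)^{n-1}$ and $u_0(r) \sim (1+r)^\alpha$ from Remarks \ref{asympsQ} and \ref{asympsu_0}, the integrand $(1+Q'^2)u_{j-1}\mathcal{J}u_0$ is pointwise comparable to $\rho^{n+2j-3}(1+\rho)^{n-1+2\alpha}$. Provided this integrand's exponent strictly exceeds $-1$ at both ends of $(0,r)$ (automatic at $\rho = 0$ from $n \ge 4$, and at $\rho = \infty$ from the explicit lower bound $\alpha \ge -2$ valid for $n \ge 4$), the integral $\int_0^r$ is controlled by its integrand evaluated near $\rho = r$. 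Dividing by $u_0(r)\mathcal{J}(r) \sim r^{n-1}(1+r)^{n-1+\alpha}$ then yields $g_j(r) \sim r^{2j-1}(1+r)^\alpha$.

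Next I would apply $-A^{-1}$. The ratio $g_j/u_0 \sim \rho^{2j-1}$ fails to be integrable on $(0,\infty)$ for $j \ge 1$, so Definition \ref{inverseDefn} selects the second case and
\[
u_j(r) = u_0(r)\int_0^r \frac{g_j(\rho)}{u_0(\rho)}\, d\rho.
\]
Since the integrand is comparable to $\rho^{2j-1}$ uniformly, the integral is comparable to $r^{2j}$, and multiplication by $u_0(r) \sim (1+r)^\alpha$ produces the claimed bound $u_j \sim r^{2j}(1+r)^\alpha$. Positivity of $u_j$ is automatic because every factor in the nested integrals has a definite sign.

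The main obstacle will be the bookkeeping needed to convert pointwise asymptotic comparisons of integrands into two-sided bounds on the resulting integrals, uniformly in $r \in (0,\infty)$ rather than merely at the two endpoints. At each stage one must verify that the integrand's exponent strictly exceeds $-1$ both at $\rho = 0$ and at $\rho = \infty$, so that $\int_0^r$ genuinely behaves like the antiderivative evaluated at the upper limit. These conditions follow from $n \ge 4$ together with the explicit range of $\alpha$ recorded in theorem \ref{Velazquez'sResult}; once verified, the induction closes cleanly, with the profile $r^{2j}(1+r)^\alpha$ interpolating between $r^{2j}$ near $0$ and $r^{2j+\alpha}$ at infinity.
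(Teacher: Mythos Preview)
Your proposal is correct and follows essentially the same route as the paper's proof: induction on $j$, with the inductive step carried out by first estimating $(A^*)^{-1}\big((1+Q'^2)u_{j-1}\big)$ via the asymptotics of $\mathcal{J}$ and $u_0$, checking that the resulting quotient by $u_0$ is not integrable so that the second branch of $A^{-1}$ applies, and then integrating once more. The exponent checks you flag (integrability near $0$ from $n\ge 4$, and near $\infty$ from $|\alpha|\le 2$) are exactly the ones the paper implicitly uses, and the paper likewise dispatches the lower bounds with ``an analogous argument holds.''
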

\begin{proof}
	The proof proceeds by induction with the base case having been handled by lemma \ref{kernel} and remark \ref{asympsu_0}.
	
	For the inductive step, the inductive hypothesis implies that
	\begin{equation*} \begin{aligned}
		(A^*)^{-1} \left( ( 1 + Q'^2) u_{j}\right)
		&= \frac{ 1}{u_0 \cl{J} } \int_0^r  ( 1 + Q'^2) u_j \cl{J} u_0 d \rho \\
		&\lesssim \frac{ 1}{ u_0 \cl{J} } \int_0^r C_j \rho^{2j} ( 1 + \rho )^{\alpha } \cl{J} u_0 d \rho \\
		&\lesssim 
		\frac{ 1}{ ( 1 + r)^{\alpha} (1+r)^{n-1} r^{n-1} }
		\int_0^r \rho^{2j} ( 1 + \rho )^{\alpha + n-1 + \alpha} \rho^{n-1}d \rho	\\
		&\lesssim 
		\frac{ 1}{ ( 1 + r)^{\alpha} (1+r)^{n-1} r^{n-1} } 
		( 1 + r )^{\alpha + n-1 + \alpha} r^{2j + n} 	\\
		&= r^{2j+ 1} (1 + r)^{\alpha}
	\end{aligned} \end{equation*}
	An analogous argument holds to show that
		$$(A^*)^{-1} \left( ( 1 + Q'^2) u_{j}\right) \gtrsim r^{2j+1} ( 1 + r)^\alpha$$
	In particular, 
		$$\frac{1}{u_0} (A^*)^{-1} \left( ( 1 + Q'^2) u_{j}\right) \gtrsim r^{2j+1} $$
	is not integrable on $(0, \infty)$.
	
	It follows that
	\begin{equation*} \begin{aligned}
		L^{-1} \left( ( 1 + Q'^2) u_{j}\right)
		& = u_0 \int_0^r \frac{1}{u_0} (A^*)^{-1} \left( ( 1 + Q'^2) u_{j}\right) d \rho 	\\
		& \lesssim u_0 \int_0^r \frac{ \rho^{2j + 1} ( 1 + \rho)^\alpha}{u_0} d \rho\\
		& \lesssim (1 + r)^{\alpha} \int_0^r \rho^{2j + 1} d \rho		\\
		& \lesssim r^{2j + 2} ( 1 + r)^{\alpha}
	\end{aligned} \end{equation*}
	An analogous argument holds for the lower bound and thereby completes the inductive argument.
\end{proof}

\begin{lem} \label{gendKernel}
	If $u = u (| \mathbf x |)$ is a smooth function on $\ol{\Sigma}$ that satisfies 
		$$( \Delta_{\ol{\Sigma}} + | \ol{A} |^2 )^{j} u = 0$$
	for some $j \in \mathbb{N}$,
	then
		$$u = b_0 u_ 0 + ... + b_{j-1} u_{j-1}$$
	for some constants $b_0, ... , b_{j-1}$.
\end{lem}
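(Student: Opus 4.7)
The plan is to induct on $j$, exploiting the fact that the generalized kernel elements are constructed so that $(\Delta_{\ol{\Sigma}} + |\ol{A}|^2) u_k = u_{k-1}$ for $k \ge 1$ and $(\Delta_{\ol{\Sigma}} + |\ol{A}|^2) u_0 = 0$ on radial functions. This identity is immediate from the relation $L = (1+\ol{Q}'^2)(\Delta_{\ol{\Sigma}}+|\ol{A}|^2)$ between the radial reduction $L$ and the Jacobi operator, combined with the inductive definition $u_k = L^{-1}((1+\ol{Q}'^2) u_{k-1})$ from Definition \ref{gendKernelDefn}.

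For the base case $j=1$, the hypothesis $(\Delta_{\ol{\Sigma}}+|\ol{A}|^2)u = 0$ is equivalent, for a radial $u$, to the ODE $Lu=0$, whose solution space is $\mathrm{span}(u_0, v_0)$ by Lemma \ref{kernel}. Smoothness of $u$ on $\ol{\Sigma}$ forces $u$ to be bounded at the origin of $\ol{\Sigma}$ (which corresponds to $r=0$), but $v_0 \sim r^{-(n-2)}$ blows up there, so the $v_0$-component must vanish and $u = b_0 u_0$.

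For the inductive step, assume the result for $j-1$ and let $u$ satisfy $(\Delta_{\ol{\Sigma}}+|\ol{A}|^2)^j u = 0$. Set $v \doteqdot (\Delta_{\ol{\Sigma}}+|\ol{A}|^2) u$, which is smooth, radial, and satisfies $(\Delta_{\ol{\Sigma}}+|\ol{A}|^2)^{j-1} v = 0$; the inductive hypothesis then gives $v = \sum_{k=0}^{j-2} c_k u_k$. Define $w \doteqdot \sum_{k=0}^{j-2} c_k u_{k+1}$, so that $(\Delta_{\ol{\Sigma}}+|\ol{A}|^2) w = v$ by the identity above. Consequently $u - w$ lies in the kernel of $\Delta_{\ol{\Sigma}}+|\ol{A}|^2$. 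Lemma \ref{gendKernelAsymps} provides $u_{k+1} \sim r^{2(k+1)}(1+r)^{\alpha}$ near $r=0$, so $w$ is bounded at the origin; combined with the boundedness of $u$, the base case applied to $u-w$ yields $u - w = b_0 u_0$ for some $b_0 \in \mathbb{R}$. Therefore $u = b_0 u_0 + c_0 u_1 + \cdots + c_{j-2} u_{j-1} \in \mathrm{span}(u_0, \ldots, u_{j-1})$.

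The principal subtlety lies in the base case: one must rule out the $v_0$-mode among smooth radial solutions. This uses only the $r^{-(n-2)}$ blow-up of $v_0$ from Lemma \ref{kernel} against the local boundedness of $u$ coming from its smoothness on $\ol{\Sigma}$ (equivalently, the requirement $Q^{(\text{odd})}(0) = 0$ from Section \ref{sectParams} applied to radial smooth functions). Lemma \ref{gendKernelAsymps} is precisely what propagates this boundedness through the induction, so the inductive step reduces algebraically to the base case.
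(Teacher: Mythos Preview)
Your proof is correct and follows essentially the same induction as the paper's: Lemma~\ref{kernel} handles the base case, and in the inductive step both arguments produce a particular solution in $\mathrm{span}(u_1,\dots,u_{j-1})$ and then rule out the $v_0$-component via boundedness at $r=0$ (using Lemma~\ref{gendKernelAsymps}). The only cosmetic difference is that you first subtract $w$ and invoke the base case on $u-w$, whereas the paper writes $u = C u_0 + \tilde C v_0 + L^{-1}\bigl((1+Q'^2)[\,\cdots\,]\bigr)$ directly; note that strictly speaking your ``base case applied to $u-w$'' only needs boundedness of $u-w$ at $r=0$ rather than full smoothness on $\ol{\Sigma}$, which is exactly what you have and exactly what the paper uses.
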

\begin{proof}
	We proceed by induction on $j$.
	For the base case $j = 1$, 
	first note that $u = u(r)$ and $\Delta_{\ol{\Sigma}} u + | \ol{A} |^2 u = 0$ implies
		$$L u = ( 1 + Q'^2) \left( \Delta_{\ol{\Sigma}} u + | \ol{A} |^2 u \right) =  0$$
	Hence, lemma \ref{kernel} implies that $u = b_0 u_0 + \tl{b}_0 v_0$.
	The fact that $u$ is a smooth function on $\ol{\Sigma}$ then yields $\tl{b}_0 = 0$.
	
	For the inductive step, note that
	\begin{gather*}
		0 = ( \Delta_{\ol{\Sigma}} + | \ol{A} |^2 )^{j+1} u 
		= ( \Delta_{\ol{\Sigma}} + | \ol{A} |^2 )^{j} ( \Delta_{\ol{\Sigma}} + | \ol{A} |^2 ) u 
	\end{gather*}
	so the inductive hypothesis implies 
	\begin{gather*}
		( \Delta_{\ol{\Sigma}} + | \ol{A} |^2 ) u  = b_0 u_0 + ... + b_{j-1} u_{j-1}
	\end{gather*}
	for some constants $b_0, ... , b_{j-1}$.
	Equivalently,
		$$( 1 + Q'^2)^{-1} L u = b_0 u_0 + ... + b_{j-1} u_{j-1}$$
	By lemma \ref{kernel}, it follows that for some constants $C, \tl{C}$
		$$u = C u_0 + \tl{C} v_0 + L^{-1} ( 1 + Q'^2) \left[  b_0 u_0 + ... + b_{j-1} u_{j-1} \right] 
		= C u_0 + \tl{C} v_0 + b_0 u_1 + ... + b_{j-1} u_{j}  $$
	Smoothness of $u$ on $\ol{\Sigma}$ and the asymptotics of the $u_j$ functions imply $\tl{C} = 0$.
	Equivalently, 
		$$u = C u_0  + b_0 u_1 + ... + b_{j-1} u_{j}  $$
	This completes the induction.
\end{proof}

\begin{remark}
	The interested reader is invited to compare the analysis in this subsection with that in \cite{Collot16},
	which carries out a similar analysis for a smooth stationary solution to a supercritical semilinear heat equation.
\end{remark}

\section{Liouville-Type Theorems} \label{sectLiouville-TypeThms}

In this section, we present some vanishing theorems for the minimal surfaces described in subsection \ref{selfSimilarSolutions}.

\subsection{The Minimal Surface}

\begin{lem} \label{weightedSchauderEstParab}
	For any $l > 0$ and $ a \ge 0$ there exists a constant $C$ 
	\comment{rescaling argument only works with ancient solutions...}
	such that 
	if $u \in C^l_a( \ol{\Sigma} \times (-\infty, T] )$ solves
		$$\partial_t u = \Delta_{\ol{\Sigma}} u + | \ol{A} |^2 u		\qquad (  z, t) \in \ol{\Sigma} \times (-\infty, T]$$
	then $u \in C^{l+2}_a ( \ol{\Sigma} \times (-\infty, T] )$ with
		$$\| u \|_{C^{l+2}_a ( \ol{\Sigma} \times (-\infty, T] )} \le C \| u \|_{C^l_a ( \ol{\Sigma} \times (-\infty, T])}$$
\end{lem}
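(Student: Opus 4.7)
The plan is to reduce the weighted Schauder estimate to standard interior parabolic Schauder estimates via a point-by-point rescaling argument. I split $\ol{\Sigma}$ into a compact core $\{ | z | \le R_0\}$ and an asymptotic region $\{| z | \ge R_0\}$. On the core, standard interior parabolic Schauder estimates (e.g.\ chapter IV of \cite{LSU88}) applied on a parabolic cylinder $K' \times [t_0 - 2, t_0]$ with $K' \Subset \ol{\Sigma}$ immediately control $u$ in $C^{l+2}$ on a slightly smaller cylinder in terms of its $C^{l}$ norm there, since the coefficient $|\ol{A}|^2$ is smooth and bounded with all derivatives on $K'$, and the weights $(1 + |z|)^{a + 2i + j}$ are equivalent to constants on the core.

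For the asymptotic region, fix $z_0 \in \ol{\Sigma}$ with $|z_0| \ge R_0$ and $t_0 \le T$, and set $\lambda := 1 + |z_0|$. The asymptotics of $\ol{Q}$ in remark \ref{asympsQ} (together with their propagation to all derivatives) imply that the rescaled metric $\lambda^{-2} \ol{g}$ restricted to a geodesic ball $B_{\epsilon \lambda}(z_0) \subset \ol{\Sigma}$ has uniformly bounded geometry, with Christoffel symbols, curvature, and all their covariant derivatives bounded independently of $z_0$. Introduce the parabolic rescaling
\begin{equation*}
\tl{u}(\tl{z}, \tl{t}) := u(z, t), \qquad \tl{z} := z/\lambda, \qquad \tl{t} := (t - t_0)/\lambda^2,
\end{equation*}
so that $\tl{u}$ satisfies
\begin{equation*}
\partial_{\tl{t}} \tl{u} = \tl{\Delta} \tl{u} + \lambda^2 |\ol{A}|^2 \tl{u}
\end{equation*}
on a unit-sized parabolic cylinder $\tl{P}_1 \subset \ol{\Sigma} \times (-\infty, 0]$, where $\tl{\Delta}$ is the Laplacian of the rescaled metric. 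Since $|\ol{A}|^2 \lesssim (1 + |z|)^{-2} \sim \lambda^{-2}$ uniformly on the ball, the rescaled potential $\lambda^2 |\ol{A}|^2$ is uniformly bounded in $C^l$ on $\tl{P}_1$, independently of $z_0$.

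I then apply interior parabolic Schauder estimates to $\tl{u}$ on $\tl{P}_1$ -- crucially, the ancient assumption $t \in (-\infty, T]$ guarantees that $[t_0 - \lambda^2, t_0]$ always lies inside the time domain, so no initial layer appears. This yields a bound on $\| \tl{u} \|_{C^{l+2}(\tl{P}_{1/2})}$ by $\| \tl{u} \|_{C^0(\tl{P}_1)}$ (using the equation to absorb intermediate derivatives, or by iterating from $C^l$). Undoing the rescaling, each spatial derivative introduces a factor $\lambda^{-1}$ and each time derivative a factor $\lambda^{-2}$, so
\begin{equation*}
\lambda^{2i + j} \bigl| \nabla_{\ol{\Sigma}}^j \partial_t^i u(z_0, t_0) \bigr| \lesssim \sup_{P} |u| \lesssim \lambda^{-a} \| u \|_{C^0_a},
\end{equation*}
since $1 + |z| \sim \lambda$ on the preimage cylinder. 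This gives precisely the weighted pointwise bound. The Hölder seminorm at exponent $l - \lfloor l \rfloor$ is controlled analogously by rescaling Hölder quotients, using that $\min\{(1+|z_1|)^{a+\lfloor l \rfloor + \beta}, (1+|z_2|)^{a+\lfloor l \rfloor + \beta}\} \sim \lambda^{a + \lfloor l \rfloor + \beta}$ on the cylinder, and that pairs $(z_1, t_1), (z_2, t_2)$ with $\operatorname{dist}_{\ol{\Sigma}}(z_1, z_2) + |t_1 - t_2| \le 1$ either both lie in such a cylinder (handled by the Schauder seminorm bound for $\tl u$) or satisfy $|z_i| \sim \lambda$ with $|u(z_i, t_i)| \lesssim \lambda^{-a} \|u\|_{C^0_a}$, in which case the quotient is trivially bounded.

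The main obstacle is the uniformly bounded geometry of the rescaled metrics $\lambda^{-2} \ol{g}$ and the accompanying uniform estimates on the rescaled potential $\lambda^2 |\ol{A}|^2$ in $C^l$, which ensure that the parabolic Schauder constants do not degenerate as $|z_0| \to \infty$. Both follow from the quantitative asymptotic-to-cone decay of $\ol{Q}$ in remark \ref{asympsQ}. Once these are in place, the rescaling argument is mechanical and the ancient hypothesis removes any issue at the final time, yielding the stated estimate after taking a supremum over $z_0$ and combining with the core estimate.
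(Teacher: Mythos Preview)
Your approach is essentially the same as the paper's: split into a compact core handled by standard interior Schauder estimates and an asymptotic region handled by a parabolic rescaling at scale $\lambda \sim 1 + |z_0|$, using the asymptotics of $\ol{Q}$ (remark \ref{asympsQ}) to bound the rescaled coefficients uniformly and the ancient hypothesis to ensure the rescaled time interval always fits. The only cosmetic differences are that the paper carries out the rescaling in explicit polar coordinates $(r,\omega,\theta)$ rather than invoking bounded geometry of $\lambda^{-2}\ol{g}$, and it obtains the higher-order estimates by first proving a $C^{2+\beta}_a$ Schauder estimate for the \emph{inhomogeneous} equation and then applying it to $\nabla^{(j)}_{\ol{\Sigma}} u$ after differentiating the equation $j = \lfloor l \rfloor$ times---exactly the iteration you allude to parenthetically.
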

\begin{proof}
	Standard local estimates apply to show that $u$ is smooth and
		$$\| u \|_{C^{l+2}_0( \ol{\Sigma} \cap B_R \times ( - \infty, T] )} \le C(n, R, l) \| u \|_{C^l_0 ( \ol{\Sigma} \cap B_R \times ( -\infty, T]) }$$
	In what remains, we shall apply a scaling argument to show the solution $u$ has the proper decay at infinity.
	Specifically, we prove the $C^{2+\beta}_a$ estimate for the associated inhomogeneous equation and 
	then repeatedly differentiate the homogeneous equation to obtain the $C^l_a$ estimate.
	
	First, let $\beta > 0$ and consider $w \in C^\beta_a ( \ol{\Sigma} \times (-\infty, T])$ a smooth solution of the inhomogeneous equation
		$$\partial_t w = \Delta_{\ol{\Sigma}} w + | \ol{A}|^2 w + f(z, t)		\qquad (z,t) \in \ol{\Sigma} \times (-\infty, T]$$
	where $f \in C^\beta_{a+2} ( \ol{\Sigma} \times (-\infty, T])$.	
	Use polar coordinates $(r, \omega)$ for $\mathbf x$ to write $w = w(r, \omega, \theta, t)$.
	By proposition \ref{RiemMetric},
	the equation
		$$\partial_t w = \Delta_{\ol{\Sigma}} w + | \ol{A}|^2 w + f$$
	can be written in coordinates as
	\begin{equation*} \begin{aligned}
		\partial_t w 
		&= \frac{ 1}{ \sqrt{ det( \ol{g}(r) ) } } \partial_i \left( \sqrt{ det ( \ol{g}(r) ) } \ \ol{g}^{ij}(r) \partial_j w \right)
		+ | \ol{A} |^2(r) w	+ f(r, \omega, \theta, t) \\
		&= \frac{ \partial_{rr} w }{ 1 + \ol{Q}_r^2 } + \frac{ n-1}{r} \partial_r w 
		+ \frac{1}{r^2} \Delta_{\mathbb{S}^{n-1}_\omega} w + \frac{1}{\ol{Q}^2} \Delta_{\mathbb{S}^{n-1}_\theta} w
		+ | \ol{A}|^2(r) w + f(r, \omega, \theta, t)
	\end{aligned} \end{equation*}
	
	Let $R > 0$, $r_0 \ge R$, and $t_0 \le T$.
	Define the rescaled function
		$$W(\rho, \omega, \theta, \tau) \doteqdot r_0^a \, w( r_0 \rho, \omega, \theta, t_0 + \tau r_0^2 )$$
	Then, for example, 
		$$
		\partial_{\rho \rho} W( \rho, \omega, \theta, \tau) =  r_0^{a+2} \partial_{rr} w(r, \omega, \theta, t)		\qquad
		\partial_\tau W( \rho, \omega, \theta, \tau) = r_0^{a+2} \partial_{t} w(r, \omega, \theta, t)$$
	where $r = r_0 \rho$ and $t = t_0 + \tau r_0^2$,
	and it follows that $W$ solves the equation
	\begin{equation*} \begin{aligned}
		\partial_\tau W 
		=& \frac{ \partial_{\rho \rho} W}{ 1 + \ol{Q}_r( r_0 \rho)^2 } + \frac{n-1}{ \rho} \partial_\rho W
		+ \frac{1}{\rho^2} \Delta_{\mathbb{S}^{n-1}_\omega} W
		+ \frac{ r_0^2}{\ol{Q}(r_0 \rho)^2} \Delta_{\mathbb{S}^{n-1}_\theta} W		\\
		&+ r_0^2 | \ol{A} |^2(r_0 \rho) W + r_0^{a+2} f( r_0 \rho, \omega, \theta, t_0 + \tau r_0^2 )
	\end{aligned} \end{equation*}
	For $(\rho, \omega, \theta, \tau) \in \left[ \frac{1}{2} , \frac{3}{2} \right] \times \mathbb{S}^{n-1} \times \mathbb{S}^{n-1} \times \left[ - \frac{1}{4}, 0 \right]$,
	the asymptotics of $\ol{Q}$ imply that the coefficients of this equation can be bounded by constants depending only on $n, R,$ and $\beta$.
	Interior estimates for parabolic equations then imply that for some constant $C = C(n,R, \beta )$ 
	\begin{equation*} \begin{aligned}
		&\,  \| W \|_{C^{2 + \beta} ( \{ ( \rho, \omega, \theta, \tau) \in [3/4, 5/4] \times \mathbb{S}^{n-1} \times \mathbb{S}^{n-1}  \times [-1/16, 0] \} ) }	\\
		\le&  C \| W \|_{C^{\beta}( \{ ( \rho, \omega, \theta, \tau) \in [1/2, 3/2] \times \mathbb{S}^{n-1} \times \mathbb{S}^{n-1}  \times [-1/4, 0] \} ) }		\\
		& + C r_0^{a + 2} \| f( r_0 \rho, \omega, \theta, t_0 + \tau r_0^2) \|_{C^{\beta} ( \{ ( \rho, \omega, \theta, \tau) \in [1/2, 3/2] \times \mathbb{S}^{n-1} \times \mathbb{S}^{n-1}  \times [-1/4, 0] \} ) }	
	\end{aligned} \end{equation*}
	Because $r_0 \ge R$ and $t_0 \le T$ were arbitrary,
	combining this $W$ estimate with the local estimate for $w$ yields
		$$\| w \|_{C^{2+\beta}_a ( \ol{\Sigma} \times ( - \infty, T] )} \le C \left( \| w \|_{C^\beta_a ( \ol{\Sigma} \times (-\infty, T] ) } 
		+ \| f \|_{C^\beta_{a+2}( \ol{\Sigma} \times (-\infty, T])} \right)	 $$
	
	The above result proves the claim for $0 < l < 1$.
	To prove the higher derivative estimates,
	let $j = \lfloor l \rfloor$ and $\beta = l - j$.
	Differentiating the evolution equation for $u$ and applying the Gauss equation, 
	it follows that any $j$th order derivative $\nabla^{(j)}_{\ol{\Sigma}} u = \nabla^{(j)}u $ satisfies an equation of the form
		$$\partial_t \nabla^{(j)} u = \Delta_{\ol{\Sigma} } \nabla^{(j)} u  + | \ol{A} |^2 \nabla^{(j)} u 
		+ \sum_{i = 0}^j \nabla^{(i)} | \ol{A} |^2 * \nabla^{(j-i)} u$$
	Note that if $u \in C^l_a( \ol{\Sigma} \times ( - \infty, T] )$ then the asymptotics of $| \ol{A}|^2$ imply that
		$$\sum_{i=0}^j \nabla^{(i)} | \ol{A} |^2 * \nabla^{(j-i)} u \in C^{\beta}_{a+j+2} ( \ol{\Sigma} \times ( -\infty, T])$$
	with the estimate
		$$\left \| \sum_{i=0}^j \nabla^{(i)} | \ol{A} |^2 * \nabla^{(j-i)} u \right \|_{C^{\beta}_{a+j+2} ( \ol{\Sigma} \times ( -\infty, T])}
		\le C(n, l, a)  \| u \|_{C^l_{a} ( \ol{\Sigma} \times ( -\infty, T] ) }$$
	Taking $w = \nabla^{(j)} u \in C^{\beta}_{a+j} ( \ol{\Sigma} \times ( -\infty, T])$, the above result for the inhomogeneous equation implies that
		$$\| \nabla^{(j)} u \|_{C^{2+ \beta}_{a+j} ( \ol{\Sigma} \times ( -\infty, T]) } \lesssim \| \nabla^{(j)} u \|_{C^{\beta}_{a+j} ( \ol{\Sigma} \times ( -\infty, T]) } + \| u \|_{C^l_{a}  ( \ol{\Sigma} \times ( -\infty, T])}$$
	Therefore,
		$\| u \|_{C^{l+2}_{a} ( \ol{\Sigma} \times ( -\infty, T]) } \lesssim \| u \|_{C^l_a ( \ol{\Sigma} \times ( -\infty, T])}$.
\end{proof}

\comment{this next theorem is based on Prop 5.2 in Brendle-Kapouleas}
\begin{thm} \label{LiouvilleMinlSurf}
	Let $n \ge 4$ and
	$u = u( | \mathbf x |, t)$ be a smooth, ancient solution to 
		$$\partial_t u = \Delta_{\ol{\Sigma}} u + | A|^2 u		\qquad	 (  z, t) \in  \ol{\Sigma} \times (-\infty, T]$$
	If there exist constants $C_0 > 0$ and $a > | \alpha|$ such that
		$$| u(  z , t) | \le \frac{ C_0 }{ (1 + | \mathbf x |)^{a} }		
		\qquad \text{ for all } ( z , t) \in  \ol{\Sigma} \times (-\infty, T]$$
	then $u \equiv 0$.
\end{thm}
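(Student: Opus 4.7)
My plan is to reduce the problem to an $L^2$-vanishing statement for an ancient solution: first iterate $\cl{L}$ to push $u$ into a faster-decay regime; then exploit the nonpositivity $\sigma(\cl{L}) \subset (-\infty, 0]$ spectrally; and finally use the classification of the generalized kernel in Lemma \ref{gendKernel}.

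By Lemma \ref{weightedSchauderEstParab}, the hypothesis upgrades to $u \in C^\infty_a(\ol{\Sigma} \times (-\infty, T])$. Since $u = u(|\mathbf{x}|, t)$ is $O(n) \times O(n)$-invariant and $\cl{L}$ preserves this subspace, the iterate $v_N := \cl{L}^N u = \partial_t^N u$ is again a smooth, $O(n) \times O(n)$-invariant, ancient solution of $\partial_t v = \cl{L} v$, and iterating Proposition \ref{weightedSchauderEstElliptic} gives $v_N(\cdot, t) \in C^\infty_{a+2N}$ uniformly in $t$. Choosing $N$ large enough that $a + 2N > (2n-1)/2$ and using that $\ol{\Sigma}$ has volume growth comparable to $r^{2n-1}$ at infinity, we obtain $v_N(\cdot, t) \in L^2(\ol{\Sigma}, dV_{\ol{g}})$ with a bound uniform in $t \le T$.

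For the spectral vanishing step, fix $t_0 \le T$ and let $\sigma_{t_0}$ denote the spectral measure of $v_N(\cdot, t_0)$ with respect to the nonnegative self-adjoint operator $-\cl{L}$ (Theorem \ref{JacobiNonpos}). Ancientness gives $v_N(\cdot, t_0) = e^{(t_0-t_1)\cl{L}} v_N(\cdot, t_1)$ for $t_1 \le t_0$, and hence
\[
\|v_N(\cdot, t_1)\|_{L^2}^2 = \int_{[0,\infty)} e^{2(t_0-t_1)\mu}\, d\sigma_{t_0}(\mu).
\]
If $\sigma_{t_0}$ had any mass on $(\epsilon, \infty)$ for some $\epsilon > 0$, the right-hand side would blow up as $t_1 \to -\infty$, contradicting the uniform $L^2$ bound. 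So $\sigma_{t_0}$ concentrates at $\{0\}$, and $v_N(\cdot, t_0) \in \ker \cl{L} \cap L^2$. By Lemma \ref{kernel}, the radial kernel of $\cl{L}$ is $\mathrm{span}(u_0, v_0)$; but $u_0 \sim r^{-|\alpha|}$ at infinity with $|\alpha| < (2n-1)/2$ and $v_0 \sim r^{-(n-2)}$ near the origin, so neither lies in $L^2(\ol{\Sigma})$ for $n \ge 4$. Thus $v_N(\cdot, t_0) = 0$, and since $t_0$ was arbitrary, $\cl{L}^N u \equiv 0$ on $\ol{\Sigma} \times (-\infty, T]$.

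To conclude, Lemma \ref{gendKernel} yields $u(\cdot, t) = \sum_{j=0}^{N-1} b_j(t) u_j$ at each time $t$. Substituting into $\partial_t u = \cl{L} u$ and using $\cl{L} u_j = u_{j-1}$ (with $u_{-1} := 0$) gives $b_j'(t) = b_{j+1}(t)$ for $j \le N-2$ and $b_{N-1}'(t) = 0$, so each $b_j$ is polynomial in $t$. By Lemma \ref{gendKernelAsymps} we have $u_j \sim r^{2j}(1+r)^{-|\alpha|}$, so $u_0$ decays at exactly the critical rate $r^{-|\alpha|}$ while $u_j$ for $j \ge 1$ fails to decay at infinity; no nontrivial combination can therefore decay strictly faster than $r^{-|\alpha|}$. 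The hypothesis $a > |\alpha|$ forces every $b_j(t) \equiv 0$, so $u \equiv 0$. The main obstacle is the spectral vanishing step, which requires justifying the backward-time semigroup identity for the ancient solution and verifying the $L^2$-triviality of the radial kernel using the precise asymptotics of $u_0$ and $v_0$.
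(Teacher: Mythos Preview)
Your proof is correct and follows the same three-step skeleton as the paper: iterate $\cl{L}$ until the solution lands in $L^2$ uniformly in time, show this iterate vanishes using the nonpositivity of $\cl{L}$, and then use Lemma~\ref{gendKernel} together with the asymptotics of the $u_j$ to kill all coefficients.  The difference lies in the middle step. The paper argues by energy monotonicity and compactness: $\|v(t)\|_{L^2}$ is nonincreasing, so a subsequential limit $v_{-\infty}$ as $t\to-\infty$ exists, has constant $L^2$-norm, hence satisfies $(\cl{L}v_{-\infty},v_{-\infty})=0$, and Theorem~\ref{JacobiNonpos} plus the $L^2$-triviality of the radial kernel force $v_{-\infty}=0$; monotonicity then gives $v\equiv 0$.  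You instead run a spectral-measure argument: the semigroup identity forces the spectral measure of $v_N(\cdot,t_0)$ to be supported at $\{0\}$, whence $v_N(\cdot,t_0)\in\ker\cl{L}\cap L^2=\{0\}$.  Your route is shorter and avoids the compactness extraction, at the cost of justifying that the classical ancient solution coincides with the $L^2$-semigroup evolution (which you correctly flag, and which follows from the standard energy identity since $\cl{L}\le 0$).  Two small remarks: the decay-gain for $v_N$ already follows from iterating Lemma~\ref{weightedSchauderEstParab} (the parabolic estimate), so the appeal to Proposition~\ref{weightedSchauderEstElliptic} is unnecessary; and your ODE analysis showing the $b_j$ are polynomial is harmless but superfluous, since at each fixed $t$ the distinct growth rates $r^{2j-|\alpha|}$ of the $u_j$ already force all $b_j(t)=0$ directly from $|u|\lesssim r^{-a}$ with $a>|\alpha|$.
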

\begin{proof}
	By lemma \ref{weightedSchauderEstParab}, 
	for any $j \in \mathbb{N}$, there exists $C_j$ such that
		$$| ( \Delta_{\ol{\Sigma}} + |A|^2 )^j u | \le \frac{ C_j}{( 1 + r)^{a + 2j}}$$
	In particular, 
	there exists $j = j( \alpha, n, a)$ large enough such that
		$$v(\cdot, t) \doteqdot ( \Delta_{\ol{\Sigma}} + |A|^2 )^j u( \cdot, t) \in L^2 ( dV_{\ol{g}} )$$
	is an ancient solution to $\partial_t v = \cl{L} v$ 
	with $\| v(\cdot , t) \|_{L^2( dV_{\ol{g}} )}$ uniformly bounded by a constant independent of $t$.	
	
	It follows that
		$$\frac{d}{dt} \frac{1}{2} \int_{\ol{\Sigma}} v^2 d V_{\ol{g}} = \int_{\ol{\Sigma}} \langle \cl{L} v, v \rangle_{\ol{g}} dV_{\ol{g}} = ( \cl{L} v, v)_2 \le 0$$
	Hence, $\| v(t) \|_{L^2( \ol{\Sigma} )}$ is nonincreasing in $t$ and the limit
	$M = \lim_{t \to -\infty} \| v(t) \|_{L^2( \ol{\Sigma})}$ exists.
	
	Now, take a sequence $t_i \to -\infty$ and define
	$v_i(\cdot , t) \doteqdot v( \cdot , t + t_i)$.
	By standard parabolic estimates,
	we may pass to a subsequence, still denoted by $v_i$, that converges in $C^\infty_{loc}$ to some function $v_{-\infty}( | \mathbf x |, t)$ which satisfies
		$$\partial_t v_{-\infty} = \cl{L} v_{-\infty} 		\qquad \text{ for all } (  z , t) \in \ol{\Sigma} \times \mathbb{R}$$
	Moreover, $| v_{-\infty} ( z, t) | \le C_j ( 1 + | \mathbf x |)^{a+2j}$ and
		$$\| v_{-\infty} ( \cdot , t) \|_{L^2} = \lim_{i \to \infty} \| v ( \cdot , t+t_i ) \|_{L^2} = M$$
	for all $t \in \mathbb{R}$ by the dominated convergence theorem.
	Therefore, 
		$$0 = \frac{d}{dt} \frac{1}{2} \| v_{-\infty}( \cdot , t) \|_{L^2}^2 =  ( \cl{L} v_{-\infty} , v_{-\infty} )_{L^2} $$
	and so $\cl{L} v_{-\infty} = 0$ by theorem \ref{JacobiNonpos}.
	Lemma \ref{gendKernel} and the fact that $v_{-\infty} \in L^2$ then imply that $v_{-\infty} \equiv 0$.
	Consequently, $M = 0$ and monotonicity of the $L^2$-norm yields $v \equiv 0$.
	
	Thus, 
		$$( \Delta_{\ol{\Sigma}} + \left| \ol{A} \right|^2 )^j u \equiv 0$$
	By lemma \ref{gendKernel}, $u$ may be written as
		$$u(t) =  b_0(t) u_0 + ... + b_{j-1}(t) u_{j-1}$$
	Using the spatial asymptotics at $r = \infty$ and lemma \ref{gendKernelAsymps}, it follows that
		$$0 \equiv b_0(t) \equiv b_1(t) \equiv ... \equiv b_{j-1}(t)$$
\end{proof}

\subsection{The Minimal Cone}
By remark \ref{A^2Cone} and corollary \ref{LaplaceBeltramiMinl},
when $\Sigma$ is the Simons cone $\mathcal{C}$ given by the profile function $Q(r) = r$,
the equation
	$$\partial_t u = \Delta_{\Sigma} u + | A|^2 u$$
applied to functions of $r = | \mathbf x |$ becomes
\begin{equation} \label{JacobiParabEqnCone}
	\partial_t u = \frac{1}{2} u'' + \frac{n-1}{r} u' + \frac{ n-1}{r^2} u 		\qquad r \in (0, \infty)
\end{equation}

If $u( |\mathbf x |,t)$ solves (\ref{JacobiParabEqnCone}), then 
	$$v(| \mathbf x |, t) \doteqdot |\mathbf x |^{n-1} u( | \mathbf x |, 2t)$$
solves the Bessel parabolic equation
\begin{equation}  \label{BesselParabEqn}
	\partial_t v = \Delta_\mu v
\end{equation}
where \comment{Took $\mu_+ = \mu$ throughout} 
$$\Delta_\mu \doteqdot \partial_{rr} + \left( \frac{1}{4} - \mu^2 \right) r^{-2} \qquad
	\mu = \mu (n) \doteqdot  \sqrt{ \frac{1}{4} + (n-1)(n-4) } $$
We refer the reader to \cite{BdLC18}, \cite{BCS14}, and the references therein for additional background on the Bessel parabolic equation.
Note that 
	$$| u | \le C r^{\alpha} \qquad
\text{ if and only if } \qquad
	| v | \le C r^{n-1 + \alpha} = C r^{\mu + 1/2}$$
Moreover, the stationary solution $u = C r^{\alpha}$ of (\ref{JacobiParabEqnCone}) corresponds to the stationary solution 
$v = C r^{\mu + 1/2} = C r^{n-1+\alpha}$ of (\ref{BesselParabEqn}).
	
\begin{remark}
	Note $\mu(n)$ is increasing in $n$ and some values of $\mu(n)$ include
		$$\mu(4) = \frac{1}{2} 		\qquad \mu (5) = \sqrt{ 17/4 } > 2$$
\end{remark}	

The heat kernel associated to $\Delta_\mu$ is given by
	$$W_t^\mu (r,\rho) = \frac{ \sqrt{r \rho} }{2t} I_\mu \left( \frac{r \rho}{2t} \right) e^{ - \frac{ r^2 + \rho^2}{4t} }		\qquad r, \rho, t \in (0, \infty)$$
Here $I_\mu$ denotes the modified Bessel function of the first kind and order $\mu$.

By estimating the heat kernel, we can obtain the following vanishing theorem for solutions to (\ref{BesselParabEqn}).

\begin{thm} \label{LiouvilleBesselParabEqn}
	Let $v(r,t)$ be an ancient solution to 
		$$\partial_t v = \Delta_\mu v		\qquad (r,t) \in (0, \infty) \times ( - \infty, T]$$
	If there exist constants $C> 0$ and $\delta \in (0, 2 \mu + 2 )$ such that
		$$|v(r,t)| \le C r^{\mu + 1/2- \delta}		\qquad \text{for all } (r,t) \in (0, \infty) \times ( - \infty, T]$$
	then $v \equiv 0$.
\end{thm}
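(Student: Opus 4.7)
My approach is to represent $v$ via the Bessel heat kernel $W_s^\mu$ and then exploit parabolic scaling together with the growth hypothesis to force $v$ to vanish. Specifically, I would establish that for every $s > 0$,
$$v(r,t) = \int_0^\infty W_s^\mu(r,\rho)\, v(\rho, t-s)\, d\rho,$$
which expresses $v$ in terms of its values $s$ units in the past, and then let $s \to \infty$. Convergence of the integral is guaranteed by the hypothesis: near $\rho = 0$ one has $W_s^\mu(r,\rho) \sim \rho^{\mu + 1/2}$, so the integrand behaves like $\rho^{2\mu + 1 - \delta}$, which is integrable at $\rho = 0$ precisely when $\delta < 2\mu + 2$; the Gaussian factor $e^{-\rho^2/(4s)}$ dominates the polynomial growth of $v$ at $\rho = \infty$.

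The core of the argument is the scaling computation. The substitution $\rho = \sqrt s\, y$, $x = r/\sqrt s$ yields
$$W_s^\mu(\sqrt s\, x, \sqrt s\, y) = \frac{1}{\sqrt s} \cdot \frac{\sqrt{xy}}{2} I_\mu(xy/2)\, e^{-(x^2+y^2)/4},$$
so that, using $|v(\rho, t-s)| \le C \rho^{\mu + 1/2 - \delta}$,
$$|v(r,t)| \le C\, s^{(\mu + 1/2 - \delta)/2}\, \Phi(r/\sqrt s), \qquad \Phi(x) \doteqdot \int_0^\infty \frac{\sqrt{xy}}{2} I_\mu(xy/2)\, e^{-(x^2+y^2)/4}\, y^{\mu + 1/2 - \delta}\, dy.$$
I would then prove the elementary bound $\Phi(x) \lesssim x^{\mu + 1/2}$ as $x \to 0$ by splitting the domain at $xy = 2$: on the inner region the bound $I_\mu(z) \lesssim z^\mu$ for $z$ bounded produces an integrand of size $x^{\mu + 1/2}\, y^{2\mu + 1 - \delta} e^{-y^2/4}$, whose $y$-integral is finite exactly because $\delta < 2\mu + 2$; on the outer region $xy \ge 2$, the asymptotic $I_\mu(z) \sim e^z/\sqrt{2\pi z}$ combines with the exponentials to give a Gaussian $e^{-(x-y)^2/4}$, whose contribution is superexponentially small in $x$ as $x \to 0$.

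Substituting this back and fixing $(r,t)$ while letting $s \to \infty$ (so that $r/\sqrt s \to 0$) then gives
$$|v(r,t)| \le C\, s^{(\mu + 1/2 - \delta)/2} \left( \frac{r}{\sqrt s} \right)^{\mu + 1/2} = C\, r^{\mu + 1/2}\, s^{-\delta/2} \xrightarrow[s \to \infty]{} 0,$$
where I use $\delta > 0$. Hence $v \equiv 0$.

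\textbf{Main obstacle.} The delicate step is justifying the heat-kernel representation in the first display: $\Delta_\mu$ has a singular potential at $r=0$, and for $\mu = 1/2$ (i.e.\ $n = 4$) one is at the borderline of the limit-point / limit-circle classification for the Bessel operator, so the particular self-adjoint realization associated with $W_s^\mu$ (the Friedrichs extension, with the appropriate vanishing condition at $r = 0$) must be identified and $v$ shown to lie in the domain where the formula applies. For $n \ge 5$ one has $\mu \ge 2$, the self-adjoint extension is unique, and uniqueness for the Cauchy problem in the polynomial-growth class gives the representation immediately. Once Step 1 is secured, the rest is a direct scaling computation with Bessel function asymptotics.
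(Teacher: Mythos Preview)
Your approach is essentially the same as the paper's: represent $v$ via the Bessel heat kernel at an earlier time, split the $\rho$-integral, and extract a factor of $s^{-\delta/2}$ that tends to zero. The only notable difference is that the paper splits at $\rho=\sqrt{t}$ and handles the outer piece by the slick observation that $\rho^{\mu+1/2}$ is stationary (so $\int_0^\infty W_t^\mu(r,\rho)\rho^{\mu+1/2}\,d\rho=r^{\mu+1/2}$ directly), whereas you use the large-argument asymptotic of $I_\mu$; the paper, incidentally, does not address the representation-formula issue you flag.
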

\begin{proof}
	Let $r >0$ and $t_0 \in (-\infty, T]$.
	Then, for any $t >0$,
	\begin{equation*} \begin{aligned}
		|v (r, t_0) | =& \left| \int_0^\infty W_t^\mu (r,\rho) v(\rho, t_0 - t) d \rho \right| \\
		\le& \left| \int_0^{\sqrt{t}}  W_t^\mu (r,\rho) v( \rho , t_0 - t) d \rho \right| 
		+ \left| \int_{\sqrt{t}}^\infty  W_t^\mu (r,\rho) v(\rho, t_0 - t) d\rho \right| \\	
		\doteqdot& (I) + (II)
	\end{aligned} \end{equation*}
	
	Recall that the Bessel function $I_\mu$ satisfies the estimate
		$$I_\mu(z) \le  \frac{C z^\mu e^z}{(1 + z)^{\mu + 1/2} } \le C z^\mu e^{z}$$
	It follows that if $t \gg 1$ is sufficiently large depending on $r$ and $t_0$, then
	\begin{equation*} \begin{aligned}
		(I) 
		=& \left| \int_0^{\sqrt{t} }  \frac{ \sqrt{r \rho}}{2t} I_\mu \left( \frac{r \rho}{2t} \right) e^{- \frac{r^2 + \rho^2}{4t} } v(\rho, t_0 - t) d \rho \right|\\
		\lesssim& \int_0^{\sqrt{t}} \frac{ \sqrt{r \rho} }{2t} \left( \frac{r \rho}{2t} \right)^{\mu} \rho^{\mu + 1/2 - \delta} d \rho \\ 
		=& \frac{ r^{\mu + 1/2} }{(2t)^{\mu + 1} } \int_0^{\sqrt{t} } \rho^{2 \mu + 1 - \delta} d\rho \\ 
		\lesssim& r^{\mu + 1/2} t^{- \delta/2} 
		\xrightarrow[t \nearrow \infty]{} 0
	\end{aligned} \end{equation*}
	
	Also, since $r^{\mu + 1/2}$ is a stationary solution of \ref{BesselParabEqn}, 
	\begin{equation*} \begin{aligned} 
		(II) =& \left| \int_{\sqrt{t}}^\infty W_t^\mu(r,\rho) v(\rho, t_0 - t) d\rho \right| \\
		\lesssim& \int_{\sqrt{t}}^\infty W_t^\mu(r, \rho) \rho^{\mu + 1/2} \rho^{-\delta} d \rho \\
		\le& t^{- \delta/2} \int_{\sqrt{t}}^\infty W_t^\mu(r, \rho) \rho^{\mu + 1/2} d\rho \\
		\le& t^{-\delta/2} \int_0^\infty W_t^\mu(r, \rho) \rho^{\mu + 1/2} d\rho \\
		\le& t^{-\delta /2 } r^{\mu+1/2} 
		\xrightarrow[t \nearrow \infty]{} 0
	\end{aligned} \end{equation*}
	This completes the proof.
\end{proof}

Theorem \ref{LiouvilleBesselParabEqn} yields the corresponding Liouville-type theorem for solutions to (\ref{JacobiParabEqnCone}).
\begin{cor} \label{LiouvilleCone}
	Let $u = u( | \mathbf x | , t)$ be an ancient solution to 
		$$\partial_t u = \Delta_{\Sigma} u + | A_{\Sigma}|^2 u 		\qquad (  z , t) \in \Sigma \times ( - \infty, T]$$
	where $\Sigma = \mathcal{C}$ is the Simons cone.
	If there exist constants $C> 0$ and $\delta \in (0, 2 \mu + 2)$ such that
		$$| u( |\mathbf x|, t) | \le C | \mathbf x |^{\alpha - \delta}		\qquad \text{for all } (  z, t) \in \Sigma \times ( -\infty, T]$$
	then $u \equiv 0$.
\end{cor}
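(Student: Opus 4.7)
The plan is to reduce the corollary to Theorem \ref{LiouvilleBesselParabEqn} via the change of variables already introduced in the text, namely $v(r,t) \doteqdot r^{n-1} u(r, 2t)$. Since $u = u(|\mathbf{x}|,t)$ is a function of $r = |\mathbf{x}|$ alone, equation (\ref{JacobiParabEqnCone}) applies, and the paper has already observed that this substitution converts (\ref{JacobiParabEqnCone}) into the Bessel parabolic equation (\ref{BesselParabEqn}) for $v$.

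First I would verify that $v$ is an ancient solution to $\partial_t v = \Delta_\mu v$ on $(0,\infty) \times (-\infty, T/2]$; the time rescaling $t \mapsto 2t$ only shifts the upper time bound and preserves the ancient nature of the solution. Next I would translate the growth hypothesis: using $n - 1 + \alpha = \mu + \tfrac{1}{2}$ (which is precisely the identity noted in the text between the stationary solution $u = Cr^\alpha$ of (\ref{JacobiParabEqnCone}) and $v = Cr^{\mu+1/2}$ of (\ref{BesselParabEqn})), the bound $|u(r,t)| \le C r^{\alpha - \delta}$ gives
\begin{equation*}
    |v(r,t)| = r^{n-1} |u(r, 2t)| \le C r^{n-1 + \alpha - \delta} = C r^{\mu + \frac{1}{2} - \delta}
\end{equation*}
for all $(r,t) \in (0,\infty) \times (-\infty, T/2]$, with the same $\delta \in (0, 2\mu+2)$ as in the hypothesis on $u$.

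At this point the hypotheses of Theorem \ref{LiouvilleBesselParabEqn} are satisfied by $v$, and the theorem yields $v \equiv 0$. Since $v(r,t) = r^{n-1} u(r, 2t)$ and $r > 0$ on the Simons cone away from the vertex, this forces $u \equiv 0$ on $\mathcal{C} \times (-\infty, T]$, as desired.

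There is no serious obstacle here: the reduction is essentially algebraic, and the only points to be careful about are (i) the change of time variable, which only affects the endpoint of the ancient interval, and (ii) matching the exponents via $n-1+\alpha = \mu+\tfrac{1}{2}$, which the text has already recorded. The substantive work was done in Theorem \ref{LiouvilleBesselParabEqn}.
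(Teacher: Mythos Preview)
Your proposal is correct and is exactly the reduction the paper intends: the text states that Theorem \ref{LiouvilleBesselParabEqn} ``yields the corresponding Liouville-type theorem for solutions to (\ref{JacobiParabEqnCone})'' without giving further details, and your argument via $v(r,t)=r^{n-1}u(r,2t)$ together with the identity $n-1+\alpha=\mu+\tfrac{1}{2}$ (already recorded in the text) is precisely how that reduction goes.
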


\section{Boundedness of $H$ in the Inner and Parabolic Regions}
In this section, we combine the Liouville-type theorems from the previous section with a blow-up argument to argue that the mean curvature remains bounded in the inner and parabolic regions up to the singularity time.
The methods in this section parallel an approach taken in \cite{BK16} and \cite{BK17}, which the latter paper refers to as ``semilocal maximum principles."

To simplify the notation, let $\Lambda(t)$ denote the blow-up rate of the second fundamental form
	$$\Lambda(t) \doteqdot (T-t)^{- \sigma_k - \frac{1}{2}} \gg \frac{1}{\sqrt{T-t}} \gg 1$$

\begin{thm} \label{bddHInner+Parab1}
	If $\Gamma > 0$,
	$a \in ( | \alpha| , | \alpha| + 1)$, and 
	$k$ is large enough such that
		$$\lambda_k \left( 1 - \frac{ a}{ 1 + | \alpha|} \right) - \frac{1}{2} \ge 0$$
	then 
		$$ \sup_{0 \le t < T} \sup_{z \in B( A \sqrt{T-t}) \cap \Sigma(t)} ( 1 + \Lambda(t) |z|)^a | H_{\Sigma(t)} (z) | < \infty$$
\end{thm}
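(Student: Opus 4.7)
The plan is to argue by contradiction using a point-selection blow-up argument that reduces matters to the Liouville theorems of the previous section. The key observation is that the mean curvature $H$ of a mean curvature flow satisfies the linear Jacobi evolution equation $\partial_t H = \Delta_{\Sigma(t)} H + |A|^2 H$, so rescalings of $H$ will converge to an ancient solution of the Jacobi equation on whichever limiting flow appears.

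Set $w(z,t) = (1 + \Lambda(t)|z|)^{a} |H(z,t)|$ and suppose for contradiction that $\sup w = \infty$. Since the flow is smooth on $[0,t']$ with $\Lambda(t')$ bounded for $t' < T$, the supremum can only blow up as $t \nearrow T$. Via a point-selection argument in the spirit of Bamler--Kleiner's ``semilocal maximum principles,'' I would extract a sequence $(z_i, t_i)$ with $t_i \nearrow T$, $z_i \in B(A\sqrt{T-t_i}) \cap \Sigma(t_i)$, $\ell_i := w(z_i, t_i) \to \infty$, and the property that on a parabolic neighborhood of $(z_i,t_i)$ whose radius in the appropriate rescaled coordinates tends to infinity, the inequality $w \le 2\ell_i$ holds. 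This is where the numerical hypothesis $\lambda_k(1 - a/(1+|\alpha|)) - \tfrac{1}{2} \ge 0$ enters, since it controls how fast $\Lambda(t)$ and hence the weight $(1+\Lambda(t)|z|)^a$ can vary backward in time, permitting the selection window to be enlarged.

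Next, I would choose the blow-up factor $\lambda_i$ according to the asymptotic location of $z_i$. If $\Lambda(t_i)|z_i|$ remains bounded, set $\lambda_i = \Lambda(t_i)$; by Corollary \ref{rescaleMinlSurfCgce}, the rescaled flows $\tl\Sigma_i(\tau) = \lambda_i \Sigma(t_i + \tau/\lambda_i^2)$ converge in $C^\infty_{loc}$ to $\ol{\Sigma}$. If $\Lambda(t_i)|z_i| \to \infty$, choose $\lambda_i$ in the range $(T-t_i)^{-1/2} \ll \lambda_i \ll \Lambda(t_i)$ so that $\lambda_i|z_i|$ stays bounded away from $0$ and $\infty$ (for example $\lambda_i = |z_i|^{-1}$ in the intermediate regime, or $\lambda_i \sim (T-t_i)^{-1/2}$ in the outer parabolic regime); Theorem \ref{rescaleConeCgce} and item (2) of Theorem \ref{Velazquez'sResult} then give $C^\infty_{loc}$-convergence of $\tl\Sigma_i$ to the Simons cone $\mathcal{C}$ away from the origin. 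On the rescaled flows, normalize $w_i(\tl z, \tau) := \tl H_i(\tl z, \tau)/\tl H_i(\tl z_i, 0)$, which by the Jacobi equation satisfies $\partial_\tau w_i = \Delta_{\tl\Sigma_i(\tau)} w_i + |\tl A_i|^2 w_i$ with $w_i(\tl z_i, 0) = 1$. The point-selection bound $w \le 2\ell_i$ translates directly (using $\ell_i/|H(z_i,t_i)| = (1+\Lambda(t_i)|z_i|)^a$) into the weighted estimate
\[
|w_i(\tl z, \tau)| \le C(1 + |\tl z|)^{-a} \quad \text{(inner case)}, \qquad |w_i(\tl z, \tau)| \le C|\tl z|^{-a} \quad \text{(cone case)}
\]
on rescaled parabolic balls $B_{R_i}$ with $R_i \to \infty$.

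By parabolic regularity applied to the Jacobi equation on the converging backgrounds, I would extract an $O(n)\times O(n)$-invariant limit $w_\infty$ of the $w_i$'s. In the inner case, $w_\infty$ is an ancient solution of $\partial_\tau w = \Delta_{\ol{\Sigma}} w + |\ol A|^2 w$ on $\ol{\Sigma} \times (-\infty, 0]$ with $|w_\infty| \le C(1+|\mathbf{x}|)^{-a}$; since $a > |\alpha|$, Theorem \ref{LiouvilleMinlSurf} forces $w_\infty \equiv 0$, contradicting $w_\infty(\tl z_\infty, 0) = 1$. In the cone case, $w_\infty$ is an ancient solution on $\mathcal{C} \times (-\infty, 0]$ satisfying $|w_\infty| \le C|\mathbf{x}|^{-a} = C|\mathbf{x}|^{\alpha - \delta}$ with $\delta = a - |\alpha| \in (0,1) \subset (0, 2\mu + 2)$, so Corollary \ref{LiouvilleCone} likewise gives $w_\infty \equiv 0$, a contradiction. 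The main obstacle is executing the point-selection so that the rescaled control radius $R_i$ actually tends to infinity uniformly across the inner, intermediate, and outer parabolic regimes; this is exactly what the hypothesis on $\lambda_k$ ensures, by bounding the backward-in-time growth of the weight and guaranteeing that the rescaled flow stays in the domain of the convergence results for sufficiently many parabolic times.
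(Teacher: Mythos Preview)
Your overall architecture (contradiction, rescale, pass to a limiting ancient solution of the Jacobi equation, apply Liouville) matches the paper, and your treatment of the inner case $\Lambda(t_i)|z_i|=O(1)$ and the intermediate case $\Lambda_i^{-1}\ll|z_i|\ll\sqrt{T-t_i}$ is essentially correct. Two points, however, deserve correction.

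\textbf{The boundary case $|z_i|\sim\sqrt{T-t_i}$ is not handled by your blow-up.} You propose taking $\lambda_i\sim(T-t_i)^{-1/2}$ and invoking convergence to the cone, but Theorem~\ref{rescaleConeCgce} requires $\lambda_i\gg(T-t_i)^{-1/2}$, so it does not apply. More fundamentally, at this scale the ratio $\Lambda(t_i+\tau/\lambda_i^2)/\Lambda_i=(1-\tau c)^{-\sigma_k-1/2}$ does \emph{not} tend to $1$, so after rescaling the weighted bound you extract reads $|u_\infty(z,\tau)|\le C(1-\tau c)^{(\sigma_k+1/2)a}|z|^{-a}$, which grows polynomially as $\tau\to-\infty$; Corollary~\ref{LiouvilleCone} requires a bound uniform in time and therefore does not conclude. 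In addition, the spatial domain $B(\Gamma\sqrt{T-t})$ rescales to a ball of radius $\Gamma|z_i|^{-1}\sqrt{T-t_i}=O(1)$ at $\tau=0$, so you do not obtain a solution on the full cone. The paper does not blow up here at all: it simply estimates $(1+\Lambda_i|z_i|)^a|H(z_i,t_i)|$ directly from Vel\'azquez's asymptotics for $q(\rho,s)$ near $\rho\sim 1$, obtaining $M_i\lesssim(T-t_i)^{\lambda_k(1-a/(1+|\alpha|))-1/2}$, which is bounded precisely by the numerical hypothesis. This is the \emph{only} place the hypothesis enters; your claim that it is needed to ``control backward-in-time variation of the weight'' in the point selection is mistaken.

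\textbf{No point selection is needed.} The paper avoids any Bamler--Kleiner-type selection: one simply sets $M_i=\sup_{[0,T_i]}\sup_z w(z,t)$ and picks $(z_i,t_i)$ attaining it. Then $w\le M_i$ automatically holds on all of $[0,t_i]$ in the full spatial region, and since the rescaled time intervals $[-t_i\lambda_i^2,0]$ exhaust $(-\infty,0]$ in each case, this already yields the global ancient bound needed for the Liouville theorems. Your more elaborate selection scheme is unnecessary and, as written, imprecise about why the radius $R_i\to\infty$.
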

\begin{proof}
	Suppose otherwise for the sake of contradiction.
	Then there exists a sequence of times $T_i \nearrow T$ such that
		$$\sup_{0 \le t \le T_i} \sup_{z \in B( \Gamma \sqrt{T-t}) \cap \Sigma(t)} ( 1 + \Lambda(t) |z|)^a | H_{\Sigma(t)} (z) | \doteqdot M_i \nearrow \infty$$
	Find $t_i \in [0, T_i]$ and $z_i \in \overline{ B(\Gamma \sqrt{T-t_i} ) } \cap \Sigma(t_i)$ realizing this supremum
		$$( 1 + \Lambda_i |z_i|)^a | H_{\Sigma(t_i)} (z_i) | \doteqdot M_i		
		\qquad \left( \Lambda_i \doteqdot \Lambda(t_i) \right)$$
	There are several possibilities depending on the limiting behavior of the spacetime sequence $(z_i, t_i) \in \Sigma(t_i)$.
	
	Case 1: $|z_i| \lesssim \Lambda_i^{-1}$ for some subsequence. \\
		In this case, define a sequence $\tl{\Sigma}_i(t)$ of rescaled mean curvature flows by
			$$\tl{\Sigma}_i(t) \doteqdot \Lambda_i \Sigma \left( \frac{ t}{\Lambda_i^2} + t_i \right)$$
		We obtain the following estimate on the mean curvature of the rescaled hypersurfaces $\tl{\Sigma}_i$
		\begin{equation*} \begin{aligned}
			M_i 
			=& \sup_{0 \le t \le T_i} \sup_{ z \in B( \Gamma \sqrt{T-t}) \cap \Sigma(t)} ( 1 + \Lambda(t) |z|)^a | H_{\Sigma(t)} (z) | \\
			=& \sup_{0 \le t \le t_i} \sup_{ z \in B( \Gamma \sqrt{T-t}) \cap \Sigma(t)} ( 1 + \Lambda(t) |z|)^a | H_{\Sigma(t)} (z) | \\			
			=& \sup_{0 \le t \le t_i} \sup_{ z \in B( \Gamma \sqrt{T-t}) \cap \Sigma(t)} 
				\left( 1 + \frac{ \Lambda(t)}{\Lambda_i} \Lambda_i |z| \right)^a
				\Lambda_i \left| H_{ \Lambda_i \Sigma( t)} (z \Lambda_i) \right| \\
			&(\text{setting } \, t = t_i + \tau/\Lambda_i^2		\qquad \zeta = z \Lambda_i)\\
			=& \sup_{-t_i \Lambda_i^2 \le \tau \le 0 } 	\sup_{\zeta} 
				\left( 1 + \frac{ \Lambda(t_i + \tau/\Lambda_i^2)}{\Lambda_i} |\zeta| \right)^a
				\Lambda_i \left| H_{\tl{\Sigma}_i( \tau)} (\zeta) \right| \\
		\end{aligned} \end{equation*}
	where the supremum in $\zeta$ is taken over
		$$\zeta \in B\left( \Gamma \Lambda_i \sqrt{ T - ( t_i + \tau/\Lambda_i^2)} \right) \cap \tl{\Sigma}_i(\tau) 
		\supset B \left( \Gamma \Lambda_i \sqrt{T - t_i} \right) \cap \tl{\Sigma}_i(\tau) 		\qquad (\text{if } \tau \le 0) $$
	
	Define functions
		$$ \tl{u}_i : \tl{\Sigma}_i(t) \to \mathbb{R}$$
		$$\tl{u}_i(z,t) \doteqdot \frac{\Lambda_i}{M_i} H_{\tl{\Sigma}_i(t)}(z)$$
	$\tl{u}_i(z,t)$ satisfies
		$$\partial_t \tl{u}_i = \Delta_{\tl{\Sigma}_i(t)} \tl{u}_i + | A|^2_{\tl{\Sigma}_i(t)} \tl{u}_i		\quad \text{on } \tl{\Sigma}_i(t), t \in [-t_i \Lambda_i^2, 0]$$
	and 
		$$\left( 1 + \frac{ \Lambda(t_i + t/\Lambda_i^2)}{\Lambda_i} |z| \right)^a | \tl{u}_i(z,t)| \le 1$$
	for all
		$$t \in [-t_i \Lambda_i^2, 0]		\qquad z \in B\left( \Gamma \Lambda_i \sqrt{ T - ( t_i + t/\Lambda_i^2)} \right) \cap \tl{\Sigma_i}(t) $$		
		
	Passing to the limit as $i \to \infty$ using theorem \ref{rescaleMinlSurfCgce}, we obtain an ancient solution to 
		$$\partial_t \tl{u}_\infty = \Delta_{\tl{\Sigma}_\infty} \tl{u}_\infty + | A|^2_{\tl{\Sigma}_\infty} \tl{u}_\infty$$
	defined on the limiting minimal surface $\ol{\Sigma} = \tl{\Sigma}_\infty$.
	
	Note that for any fixed $t \le 0$
		$$\lim_{i \to \infty} \frac{ \Lambda( t_i + t/ \Lambda_i^2) }{\Lambda_i}
		= \lim_{i \to \infty} \left( 1 - \frac{t}{\Lambda_i^2 ( T- t_i)} \right)^{- \sigma_k - \frac{1}{2}}
		= 1$$
	since the type II blow-up rate satisfies $\Lambda_i \gg \sqrt{ T- t_i}$.
	Hence, it follows that the limiting function $\tl{u}_\infty$ satisfies the estimate
		$$| \tl{u}_\infty(z,t) | \le ( 1 + |z|)^{-a}		\qquad \text{for all } (z,t) \in \tl{\Sigma}_\infty \times (-\infty, 0]$$
	with equality at $(\zeta_\infty, 0 ) = \lim_{i \to \infty} ( z_i \lambda_i, 0)$,
		$$| \tl{u}_\infty(\zeta_\infty, 0) | = (1 + |\zeta_\infty|)^{-a} > 0$$
	Since $a > | \alpha|$, this contradicts theorem \ref{LiouvilleMinlSurf}.\\
	
	Case 2: $ \Lambda_i^{-1} \ll | z_i| \ll \sqrt{T - t_i}$ for some subsequence.\\
	In this case, define a sequence of mean curvature flows $\tl{\Sigma}_i(t)$ by
		$$\tl{\Sigma}_i(t) \doteqdot | z_i|^{-1} \Sigma( t |z_i|^2 + t_i)$$
		
	A similar argument as in the previous case shows that
	\begin{gather*}
		M_i 
		= \sup_\tau \sup_\zeta 
			\left( 1 + \frac{ \Lambda( t_i + \tau |z_i|^2) }{\Lambda_i} \Lambda_i |z_i| |\zeta| \right)^a
			\frac{1}{|z_i|} \left| H_{\tl{\Sigma}_i(\tau)} (\zeta) \right| \\
		\ge \sup_\tau \sup_\zeta 
			\left(  \frac{ \Lambda( t_i + \tau |z_i|^2) }{\Lambda_i} \right)^a   |\zeta|^a
			\Lambda_i^a |z_i|^{a-1} \left| H_{\tl{\Sigma}_i(\tau)} (\zeta) \right| \\
	\end{gather*}
	where the suprema are taken over
		$$- t_i |x_i|^{-2} \le \tau \le 0		\qquad \zeta \in B \left( \Gamma |z_i|^{-1} \sqrt{T - ( t_i + \tau |z_i|^2 )} \right) \cap \tl{\Sigma}_i(\tau)$$
		
	Define
		$$\tl{u}_i (z,t) : \tl{\Sigma}_i(t) \to \mathbb{R}$$
		$$\tl{u}_i(z,t) \doteqdot \frac{ \Lambda_i^a |z_i|^{a-1} }{M_i} H_{\tl{\Sigma}_i(t)} (z)$$
	$\tl{u}_i$ satisfies 
		$$\partial_t \tl{u}_i = \Delta_{\tl{\Sigma}_i(t)} \tl{u}_i + | A|^2_{\tl{\Sigma}_i(t)} \tl{u}_i$$
	and
		$$\left(  \frac{ \Lambda( t_i + t |z_i|^2) }{\Lambda_i} \right)^a   |z|^a |\tl{u}_i(z,t) | \le 1$$
	for all 
		$$t \in [ - t_i |z_i|^{-2}, 0] \qquad 
		z \in B \left( \Gamma |z_i|^{-1} \sqrt{T - ( t_i + t |z_i|^2 )} \right) \cap \tl{\Sigma}_i(t)$$
		
	Passing to the limit as $i \to \infty$ using theorem \ref{rescaleConeCgce}, we obtain an ancient solution to 
		$$\partial_t \tl{u}_\infty = \Delta_{\tl{\Sigma}_\infty} \tl{u}_\infty + | A|^2_{\tl{\Sigma}_\infty} \tl{u}_\infty$$
	defined on the limiting minimal cone $\tl{\Sigma}_\infty = \mathcal{C}$.	
		
	Note that for any fixed $t \le 0$
		$$\lim_{i \to \infty} \frac{ \Lambda( t_i + t |z_i|^2) }{\Lambda_i} 
		= \lim_{i \to \infty} \left( 1 - t\frac{ |z_i|^2}{T- t_i} \right)^{- \sigma_k - \frac{1}{2} } 
		= 1$$
	since $|z_i| \ll \sqrt{ T - t_i}$.
	Hence, it follows that the limiting function $\tl{u}_\infty$ satisfies the estimate
		$$| \tl{u}_\infty(z,t) | \le |z|^{-a}$$
	Moreover, if $\zeta_i \doteqdot z_i/ |z_i| \to \zeta_\infty \in \partial B(1)$,
	\begin{equation*} \begin{aligned}
		1 
		&= \lim_{i \to \infty} (1 + \Lambda_i | z_i| |\zeta_i|)^a \frac{|z_i|^{-1}}{M_i} \left| H_{\tl{\Sigma}_i(0)} (\zeta_i) \right| \\
		&= \lim_{i \to \infty} \frac{ (1 + \Lambda_i |z_i| |\zeta_i|)^a }{\Lambda_i^a |z_i|^a} \left| \tl{u}_i (\zeta_i, 0) \right| \\
		&= \lim_{i \to \infty} \left( \Lambda_i^{-1} | z_i|^{-1} + | \zeta_i| \right)^a \left| \tl{u}_i (\zeta_i, 0) \right| \\
		&= | \zeta_\infty|^a \left| \tl{u}_\infty (\zeta_\infty, 0) \right| 
	\end{aligned} \end{equation*}
	since $\Lambda_i^{-1} \ll |z_i|$.
	In particular, $| \zeta_\infty| \in \partial B(1)$ implies
		$$ | \tl{u}_\infty ( \zeta_\infty, 0) | = | \zeta_\infty|^{-a} > 0$$
	Since $a > | \alpha|$, this contradicts theorem \ref{LiouvilleCone}.\\
	
	Case 3: $|z_i| \sim \sqrt{T - t_i}$\\
	In this final case, we may estimate the mean curvature directly from the estimates contained in \cite{V94} 
	(see for example condition (2.41) in the definition of the set $\mathcal{A}$, lemma 4.2, or lemma 4.3)
	\begin{equation*} \begin{aligned}
		M_i 
		&= ( 1 + \Lambda_i |z_i|)^{a} | H_{\Sigma(t_i)} (z_i)|\\
		&\sim \Lambda_i^a (T-t_i)^{a/2} \frac{1}{\sqrt{ T - t_i} } ( T - t_i)^{\lambda_k} \\ 
		&= ( T - t_i)^{a  \left( - \frac{ \lambda_k}{1 + |\alpha|} - \frac{1}{2} \right) + \frac{a}{2} - \frac{1}{2} + \lambda_k}\\
		&=  ( T- t_i)^{\lambda_k \left( 1 - \frac{a}{1 + |\alpha|} \right) - \frac{1}{2} }
	\end{aligned} \end{equation*}
	By assumption, $a < |\alpha| + 1$ and $k$ is sufficiently large such that
	the exponent
		$$\lambda_k \left( 1 - \frac{a}{1 + |\alpha|} \right) - \frac{1}{2} \ge 0$$
	is nonnegative.
	Hence, $M_i \lesssim 1$, which contradicts the choice of $M_i$.\\
	
	Because a contradiction arises in all possible cases, the conclusion of the theorem follows.
\end{proof}

	Since $(1 + \Lambda(t) |z|)^a \ge 1$, the theorem immediately yields the following corollary.
\begin{cor} \label{bddHInner+Parab2}
	If $\Gamma > 0$, $a \in ( | \alpha| , | \alpha| + 1)$, and $k$ is large enough such that
		$$\lambda_k \left( 1 - \frac{ a}{ 1 + | \alpha|} \right) - \frac{1}{2} \ge 0$$
	then
		$$\sup_{0 \le t < T} \sup_{z \in B( \Gamma \sqrt{T-t}) \cap \Sigma(t)}  | H_{\Sigma(t)} (z) | < \infty$$
\end{cor}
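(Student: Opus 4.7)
The plan is to deduce this corollary immediately from Theorem \ref{bddHInner+Parab1}, without further blow-up analysis. Under the hypotheses of the corollary, the hypotheses of the theorem are satisfied, so the theorem provides a finite constant $M$ with
\[
( 1 + \Lambda(t) |z|)^a | H_{\Sigma(t)} (z) | \le M
\]
for all $0 \le t < T$ and all $z \in B(\Gamma\sqrt{T-t}) \cap \Sigma(t)$.

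The key observation is that the weight in Theorem \ref{bddHInner+Parab1} is always at least $1$: since $\Lambda(t) = (T-t)^{-\sigma_k - 1/2} > 0$, $|z| \ge 0$, and $a > |\alpha| > 0$, one has $(1 + \Lambda(t)|z|)^a \ge 1$ pointwise. Consequently,
\[
|H_{\Sigma(t)}(z)| \le (1 + \Lambda(t)|z|)^a |H_{\Sigma(t)}(z)| \le M
\]
on the same region. Taking the supremum over $t \in [0,T)$ and $z \in B(\Gamma\sqrt{T-t}) \cap \Sigma(t)$ yields the stated bound.

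There is no real obstacle in this step; the corollary is a direct weakening of the weighted estimate. The only thing worth noting is that the quantifiers and the admissible range of $a$ and $k$ are inherited verbatim from the theorem, so no additional constraints need to be verified.
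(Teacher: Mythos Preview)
Your proof is correct and matches the paper's argument exactly: the paper simply notes that $(1+\Lambda(t)|z|)^a \ge 1$ and deduces the corollary immediately from Theorem~\ref{bddHInner+Parab1}.
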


\begin{remark} \label{paramsThatWork}
	Recall that the Vel{\'a}zquez  mean curvature flows are defined for $n \ge 4$ and $k \ge 2$.
	Let us write $a = | \alpha | + \delta$ for some $\delta \in (0, 1)$.
	When $k = 2$, 
	\begin{equation*} \begin{aligned}
		\lambda_k \left( 1 - \frac{ a}{ 1 + | \alpha| } \right) - \frac{1}{2} 
		&= \left( - \frac{| \alpha | + 1 }{2} + 2 \right) \frac{ 1 - \delta}{| \alpha | + 1} - \frac{1}{2} \\
		&= 2 \left( \frac{1- \delta}{ | \alpha | + 1} \right) - \frac{1- \delta}{2} - \frac{1}{2} \\
		&< 2 \frac{ 1 - \delta}{2} - 1 + \frac{\delta }{2} 	\\
		&= - \frac{ \delta }{2} < 0
	\end{aligned} \end{equation*}
	Hence, for the lowest admissible eigenmode, the above theorem does not apply.
	In fact, \cite{GS18} show that the mean curvature blows up when $k = 2$, albeit at a rate slower than that of the second fundamental form. 
	
	However, for any $n \ge 4$ and $a \in ( | \alpha | , | \alpha + 1)$, there exists $k_0( \alpha, a) > 2$ such that the Vel{\'a}zquez mean curvature flows with $k \ge k_0$ satisfy the assumptions of the theorem.
	
	When $n = 4$ and $k \ge 4$, there exist $a \in ( | \alpha|, | \alpha | + 1)$ for which the above theorem applies.
	When $n > 4$ and $k \ge 3$, there exist $a \in ( | \alpha|, | \alpha | + 1)$ for which the above theorem applies.
	Indeed, writing $a = | \alpha | + \delta$ with $\delta \in (0, 1)$ as above, it follows that
	\begin{gather*}
		\lambda_k \left( 1 - \frac{a}{ 1 + | \alpha| } \right) - \frac{1}{2}
		= k \left( \frac{1-\delta}{1 + | \alpha|} \right) +\frac{\delta}{2} -1
	\end{gather*}
	If $n = 4$ and $k \ge 4$, then this quantity equals
	\begin{gather*}
		\lambda_k \left( 1 - \frac{a}{ 1 + | \alpha| } \right) - \frac{1}{2}
		= k \frac{1 - \delta}{3} + \frac{\delta}{2} -1	
		\ge \frac{1}{3} + \delta \left( \frac{1}{2} - \frac{4}{3} \right)	
		> 0		\qquad \text{ if $0 < \delta \ll 1$.}
	\end{gather*}
	If $n \ge 5$ and $k \ge 3$, then
	\begin{gather*}
		\lambda_k \left( 1 - \frac{a}{ 1 + | \alpha| } \right) - \frac{1}{2}
		> k \left( \frac{1-\delta}{ (2/5)} \right) + \frac{\delta}{2} -1 
		\ge \frac{1}{5} + \delta\left( \frac{1}{2} - \frac{6}{5} \right)
		> 0		\quad \text{ if $0 < \delta \ll 1$.}
	\end{gather*}	
\end{remark}

\section{Estimates Outside the Parabolic Region} \label{sectEstsOutsideParabRegion}
It remains to bound the mean curvature in the region where $r \gtrsim \sqrt{ T - t}$.
To do so, partition this region into what we call the \textit{outer region} $\left \{  r \ge \Upsilon \sqrt{T} \right \}$
and the \textit{outer-parabolic overlap} $\left\{ \Gamma \sqrt{ T - t} \le r \le \Upsilon \sqrt{T} \right \}$ with constants $\Gamma, \Upsilon$ to be determined.
First, we establish curvature estimates in the outer region.
These estimates allow us to construct barriers in the parabolic-outer overlap that subsequently bound the mean curvature in this domain.

\begin{remark} \label{adjustInitData}
Throughout this section, we will often assume the Vel{\'a}zquez mean curvature flow solutions satisfy additional bounds at $t = 0$.
So long as these bounds are consistent with the set $\mathcal{A}[\eta_1, \eta_2, \theta]$ defined in section 2 of ~\cite{V94}, there is no loss of generality in imposing these additional bounds.
Indeed, such bounds may be achieved by refining the definition of the assignment $\alpha \mapsto \Gamma_\eta(\alpha)$ in section 3 of ~\cite{V94}.
\end{remark}

\subsection{Estimates in the Outer Region}
We begin by using the interior estimates of Ecker-Huisken ~\cite{EH91} to establish curvature bounds in the outer region.

\begin{lem} \label{outerRegEst}
	Let $n \ge 4$ and $k \ge 2$.
	Assume that the Vel{\'a}zquez mean curvature flow solution of parameter $k$ $\{ \Sigma(t) \subset \mathbb{R}^{2n} \}_{t \in [0, T)}$ additionally satisfies
		$$r \le Q(r,0) \quad \text{ and } \quad  1 \le \partial_r Q(r,0) \le M_0 < \infty		\qquad \text{for all } r \ge \frac{5}{12} \Upsilon \sqrt{T}$$
	for some $\Upsilon \gg 1$ sufficiently large depending on $n$ and $M_0$.
	Then, for some constant $C_n$ depending only on $n$,
		$$| A_{\Sigma(t)}| ( \mathbf z, t) \le \frac{ C_n M_0 }{ \sqrt{t} }		\qquad 
		\text{for all }  \mathbf z \in \Sigma( t)\cap \left \{ | \mathbf x | \ge \Upsilon \sqrt{T} \right \}, \, t \in [0, T)$$
		
	In particular,
		$$| H_{\Sigma(t)}| ( \mathbf z, t) \le \frac{ C_n M_0 }{ \sqrt{t} }		\qquad 
		\text{for all }  \mathbf z \in \Sigma( t)\cap \left \{ | \mathbf x | \ge \Upsilon \sqrt{T} \right \}, \, t \in [0, T)$$
\end{lem}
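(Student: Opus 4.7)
The plan is to combine propagation of the initial bounds with the interior curvature estimates of Ecker-Huisken \cite{EH91}. The goal is to show that, at each point $\mathbf{z}_0 = (\mathbf{x}_0, \mathbf{y}_0) \in \Sigma(t)$ with $|\mathbf{x}_0| \ge \Upsilon \sqrt{T}$, the hypersurface $\Sigma(t)$ is locally expressible as a graph over a fixed hyperplane in $\mathbb{R}^{2n}$ with uniformly bounded gradient, in a Euclidean ball of definite size throughout $[0, T)$. The Ecker-Huisken interior estimate then delivers $|A|^2 \lesssim 1/t$ at the center.

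First I would propagate the initial bounds on $Q$ to all of $[0, T)$. Since the Simons cone $Q \equiv r$ is a stationary solution of (\ref{MCF1}), a comparison argument for this quasilinear equation gives $Q(r, t) \ge r$ on $\{r \ge \tfrac{5}{12} \Upsilon \sqrt{T}\} \times [0, T)$, so in particular $Q$ remains bounded below by $\tfrac{5}{12} \Upsilon \sqrt{T}$ on this region. For the Lipschitz bound, I would differentiate (\ref{MCF1}) to obtain a parabolic equation for $\partial_r Q$ and apply the maximum principle. The contribution of the forcing term $-(n-1)/Q$ to this equation is of size $\sim (n-1)/(\Upsilon^2 T)$ in the outer region, so it acts as a small perturbation provided $\Upsilon \gg 1$ is chosen large depending on $n$ and $M_0$. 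The conclusion is a bound $|\partial_r Q(r, t)| \le 2 M_0$ on $\{r \ge \tfrac{5}{12} \Upsilon \sqrt{T}\} \times [0, T)$.

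Next, the $O(n) \times O(n)$-invariance combined with these bounds yields a graphical representation of $\Sigma(t)$ near $\mathbf{z}_0$. Since $|\mathbf{y}_0| = Q(|\mathbf{x}_0|, t) \ge \Upsilon \sqrt{T}$ is large, the angular variables $\mathbf{x}/|\mathbf{x}|$ and $\mathbf{y}/|\mathbf{y}|$ vary by at most a small amount on a Euclidean ball $B_R(\mathbf{z}_0)$ of radius $R \sim \Upsilon \sqrt{T}$. Consequently, $\Sigma(t) \cap B_R(\mathbf{z}_0)$ is a Lipschitz graph over the hyperplane through $\mathbf{z}_0$ perpendicular to $(0, \mathbf{y}_0/|\mathbf{y}_0|) \in \mathbb{R}^{2n}$, with Lipschitz constant controlled by $M_0$, for every $t \in [0, T)$. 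Applying the Ecker-Huisken interior curvature estimate to this graphical representation then gives
\begin{equation*}
|A_{\Sigma(t)}|^2(\mathbf{z}_0, t) \le C(n, M_0) \left( \frac{1}{t} + \frac{1}{R^2} \right),
\end{equation*}
and since $R \sim \Upsilon \sqrt{T} \ge \sqrt{t}$ for all $t \in [0, T)$, the $1/t$ term dominates and delivers $|A_{\Sigma(t)}|(\mathbf{z}_0, t) \lesssim_n M_0/\sqrt{t}$. The bound on $|H|$ follows since $|H| \le \sqrt{2n-1}\, |A|$.

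The main technical obstacle is propagating the Lipschitz bound in the first step, because the non-divergence forcing term $-(n-1)/Q$ in (\ref{MCF1}) prevents direct invocation of the standard gradient-preservation results for graphical mean curvature flow. However, the largeness of $Q$ in the outer region renders this term a small perturbation amenable to a modified maximum principle argument, so that graphicality with a uniform Lipschitz constant can be maintained up to the singular time.
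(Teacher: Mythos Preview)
Your strategy has a genuine gap in the first step. To propagate either the lower bound $Q\ge r$ or the Lipschitz bound $|\partial_r Q|\le 2M_0$ by a maximum principle on the region $\{r\ge\tfrac{5}{12}\Upsilon\sqrt{T}\}\times[0,T)$, you need boundary information at the inner edge $r=\tfrac{5}{12}\Upsilon\sqrt{T}$ for all $t\in[0,T)$. The hypotheses of the lemma give you nothing there except at $t=0$; in fact, in the paper the bound on $Q_r$ in the intermediate region is obtained only \emph{after} Lemma~\ref{outerRegEst}, using the outer estimate itself as one of the boundary conditions (see subsection~\ref{coarseBarriers} and Remark~\ref{derivEstOuter}). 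So your argument is circular as written. The issue is not the size of the forcing term $-(n-1)/Q$; it is the absence of lateral boundary control.

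The paper sidesteps this entirely. Rather than first propagating $Q_r$ and then invoking Ecker--Huisken, it applies Ecker--Huisken's \emph{interior gradient estimate} (Theorem~2.1 of \cite{EH91}) directly: this result needs only a bound on the gradient function $(\nu\cdot\mathbf e)^{-1}$ on $\Sigma(0)\cap B(\mathbf z_0,\rho/2)$ and delivers the bound on $\Sigma(t)$ in a slightly smaller ball for all later $t$, with no boundary data required. The choice of direction is $\mathbf e=\tfrac{1}{\sqrt2}\bigl(-\mathbf x_0/|\mathbf x_0|,\theta_0\bigr)$, adapted to the Simons cone, so that the initial hypothesis $1\le Q_r(\cdot,0)\le M_0$ together with $Q(\cdot,0)\ge r$ already yields $(\nu\cdot\mathbf e)^{-1}\lesssim\sqrt{1+M_0^2}$ on the initial ball. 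Your direction $(0,\mathbf y_0/|\mathbf y_0|)$ gives gradient function $\sqrt{1+Q_r^2}/(\theta\cdot\theta_0)$, whose boundedness at time $t$ presupposes the very $Q_r$ bound you are trying to establish. Once the gradient is controlled this way, Theorem~3.1 of \cite{EH91} gives $|A|\lesssim M_0(t^{-1/2}+\rho^{-1})$, and a short integration of $|H|$ confirms that points starting with $|\mathbf x_0|\ge\rho$ do not drift out of the region where the estimate applies, provided $\Upsilon$ is large.
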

\begin{proof}
Let $\rho = \frac{ 5}{6} \Upsilon \sqrt{T}$.
Let $(\mathbf x_0 , Q(| \mathbf x_0|, 0) \theta_0) \in \Sigma(0)$ with $| \mathbf x_0 | \ge \rho$.
Take 
	$$\mathbf e = \frac{1}{ \sqrt{2} } \left( -  \frac{ \mathbf x_0}{ | \mathbf x_0|}, \theta_0 \right)$$
and consider the gradient function
	$$(\nu_\Sigma( \mathbf x, \theta, t) \cdot \mathbf e)^{-1} = \sqrt{2} \sqrt{ 1 + Q_r(| \mathbf x|, t)^2} \left( \theta \cdot \theta_0 + Q_r( | \mathbf x|, t) \frac{ \mathbf x  \cdot \mathbf x_0}{ | \mathbf x | | \mathbf x_0 |} \right)^{-1}$$
For any $( \mathbf x, Q( | \mathbf x |, 0) \theta) \in \Sigma(0) \cap B \left( ( \mathbf x_0, Q( | \mathbf x_0|, 0) \theta_0 ), \frac{1}{2} \rho \right)$,
\begin{equation*} \begin{aligned}
	\frac{1}{4} \rho^2 
	&\ge | ( \mathbf x, Q \theta) - ( \mathbf x_0, Q_0 \theta_0) |^2 \\
	&= | \mathbf x|^2 + Q^2 + | \mathbf x_0|^2 + Q_0^2 
	- 2 \mathbf x_0 \cdot \mathbf x - 2 Q Q_0 \theta \cdot \theta_0 \\
	&\ge | \mathbf x |^2 + Q^2 + | \mathbf x_0 |^2 + Q_0^2 
	- | \mathbf x |^2 - \frac{ ( \mathbf x \cdot \mathbf x_0)^2}{| \mathbf x |^2 } 
	- Q^2 - Q_0^2 ( \theta \cdot \theta_0)^2 \\
	&= | \mathbf x_0|^2 \left( 1 - \left( \frac{ \mathbf x \cdot \mathbf x_0}{ | \mathbf x | | \mathbf x_0| } \right)^2 \right)
	+ Q_0^2 \left( 1 - ( \theta \cdot \theta_0)^2 \right) \\
	&\ge | \mathbf x_0|^2 \left[  \left( 1 - \left( \frac{ \mathbf x \cdot \mathbf x_0}{ | \mathbf x | | \mathbf x_0| } \right)^2 \right) + \left( 1 - ( \theta \cdot \theta_0)^2 \right) \right] 
	&& (\text{by } Q(r,0) \ge r)\\
	&\ge \rho^2 \left[  \left( 1 - \left( \frac{ \mathbf x \cdot \mathbf x_0}{ | \mathbf x | | \mathbf x_0| } \right)^2 \right) + \left( 1 - ( \theta \cdot \theta_0)^2 \right) \right] 
\end{aligned} \end{equation*}
where $Q = Q( | \mathbf x|, 0)$ and $Q_0 = Q( | \mathbf x_0|, 0)$.
Hence,
$$ \min \left \{   \frac{ \mathbf x \cdot \mathbf x_0}{ | \mathbf x | | \mathbf x_0| } ,   \theta \cdot \theta_0 \right\} \ge \frac{\sqrt{3}}{2}$$
Therefore, in $B\left( ( \mathbf x_0, Q_0 \theta_0 ) , \frac{ 1}{2} \rho \right) \cap \Sigma(0)$, the gradient function may be estimated by
\begin{equation*} \begin{aligned}
	( \nu_\Sigma \cdot \mathbf e )^{-1} 
	&= \sqrt{2} \sqrt{ 1 + Q_r( | \mathbf x|, 0)^2} \left( \theta \cdot \theta_0 + Q_r( | \mathbf x|, 0) \frac{ \mathbf x  \cdot \mathbf x_0}{ | \mathbf x | | \mathbf x_0 |} \right)^{-1} \\
	&\le \sqrt{2} \sqrt{ 1 + M_0^2} \left( \frac{ \sqrt{3} }{2} +  \frac{ \sqrt{3}}{2} \right)^{-1} \\
	&= \sqrt{ \frac{2}{3} } \sqrt{ 1 + M_0^2 }< \infty 
\end{aligned} \end{equation*}
Interior estimates for the gradient function (theorem 2.1 of ~\cite{EH91})  then imply that
	$$( \nu_{\Sigma(t)} \cdot \mathbf e )^{-1} 
	\le 
	\left( 1 - \frac{ | ( \mathbf x, Q( |\mathbf x |, t) \theta) - ( \mathbf x_0, Q_0 \theta_0 ) |^2 + 2(2n-1) t }{ (\rho/2)^2} \right)^{-1} 
	\sqrt{ \frac{2}{3} } \sqrt{ 1 + M_0^2}$$
for all 
	$\mathbf z \in \Sigma(t) \cap B \left( ( \mathbf x_0, Q( | \mathbf x_0|, 0) \theta_0 ) , \sqrt{ (\rho/2)^2 - 2(2n-1)t } \right)$.
In particular,
	$$( \nu_{\Sigma(t)} \cdot \mathbf e )^{-1} \lesssim \sqrt{ 1+ M_0^2} $$
for all
	$\mathbf z \in \Sigma(t) \cap B \left( ( \mathbf x_0, Q( | \mathbf x_0|, 0) \theta_0 ) , \sqrt{ (\rho/3)^2 - 2(2n-1)t } \right)$.
Theorem 3.1 of ~\cite{EH91} then implies
	$$| A_{\Sigma(t)} |(\mathbf z, t) \lesssim_n \sqrt{ 1 + M_0^2}  	\left( \frac{ 1}{\sqrt{ t} } + \frac{ 1}{ \rho} \right)$$
for all 
	$\mathbf z \in \Sigma(t) \cap B \left( ( \mathbf x_0, Q( | \mathbf x_0|, 0) \theta_0 ) , \sqrt{ (\rho/4)^2 - 2(2n-1)t } \right)$.
In particular, if $\Upsilon \gg 1$ is sufficiently large depending on $n$, then
	$$| A_{\Sigma(t)}| ( \mathbf z, t) \lesssim_n \frac{ \sqrt{ 1 + M_0^2} }{ \sqrt{t}} $$
for all 
	$\mathbf z \in \Sigma(t) \cap B \left( ( \mathbf x_0, Q( | \mathbf x_0|, 0) \theta_0 ) , \frac{\rho}{5} \right)$.
Since $\mathbf z_0 \in \Sigma(0)$ with $| \mathbf x_0| \ge \rho$ was arbitrary, this curvature estimate therefore holds on 
	$$\Omega(t) \doteqdot \Sigma(t) \cap \bigcup_{\mathbf z_0 \in \Sigma(0), \ | \mathbf x_0 | \ge \rho} B( \mathbf z_0, \rho/5 )$$
	
In particular,
	$$| H_{\Sigma(t)} |(\mathbf z, t) \le \frac{ C_n \sqrt{ 1 + M_0^2} }{\sqrt{t}}		\qquad \text{for all } \mathbf z \in \Omega(t)$$
If $\Sigma(t)$ is parametrized so that 
	$$\partial_t \mathbf z = H_{\Sigma(t)} \nu_{\Sigma(t)}$$
then integrating the mean curvature estimate yields
	$$| \mathbf z(t) - \mathbf z(0) | \le 2 C_n \sqrt{1+ M_0^2} \sqrt{t} \le 2 C_n \sqrt{1 + M_0^2} \sqrt{T} <  \frac{1}{6} \Upsilon \sqrt{T} = \rho/5 $$
if $\Upsilon \gg 1$ is sufficiently large depending on $n$ and $M_0$.	
It follows that 
	$$\Sigma(t) \cap \left\{\mathbf x \in \mathbb{R}^n : \, | \mathbf x | \ge \frac{6}{5} \rho \right \} \subset \Omega(t) \qquad \text{for all } t \in [0, T)$$
Therefore,
	$$| A_{\Sigma(t)} |( \mathbf z, t) \lesssim_n \frac{ \sqrt{ 1 + M_0^2}}{\sqrt{t}} \lesssim \frac{M_0}{\sqrt{t}}$$
for all
	$( \mathbf z, t) \in \Sigma(t) \cap \left\{ | \mathbf x | \ge \Upsilon \sqrt{T} \right\}$.
\end{proof}
%

\begin{remark} \label{derivEstOuter}
	Considering the components of the metric $g_{\Sigma}$ and its evolution equation $\partial_t g =-2HA$,
	lemma \ref{outerRegEst} also implies a uniform bound on $\partial_r Q$, say
		$$| \partial_r Q (r,t) | \le M 		\qquad \text{for all } r \ge \Upsilon \sqrt{T} , \quad t \in [0, T)$$

\end{remark}

\subsection{Coarse Barriers in the Parabolic-Outer Overlap} \label{coarseBarriers}
We now begin to estimate the curvature in the parabolic-outer overlap by establishing coarse estimates for the profile function $Q$.
Henceforth, we shall restrict to the case where the eigenmode $\lambda_k$ is additionally chosen so that $k \in \mathbb{N}$ is even.
This restriction will simplify some of the estimates and barriers that follow.
Indeed, \cite{V94} shows
	$$ \left| Q \left( r, t \right) - \left(  r + (T-t)^{\lambda_k +1/2} \phi_k \left( \frac{r}{\sqrt{T-t}} \right) \right) \right|
	\le \mu (T - t)^{\lambda_k +1/2}
		\quad \text{for } r \sim \sqrt{T-t}$$
where $\mu \ll 1$ is a small constant and $\phi_k$ is an eigenfunction for the differential operator $A$ in equation (2.20) of ~\cite{V94} with associated eigenvalue $\lambda_k$.
When $k$ is even, $\phi_k( x)$ is asymptotic to $\ol{C}_k x^{2 \lambda_k +1}$ as $x$ tends to infinity, where $\ol{C}_k > 0$ is a positive constant.
In particular, when $k$ is even, for sufficiently large $\Gamma$, the profile function satisfies
	$$Q\left( \Gamma \sqrt{T-t} , t \right) \ge \Gamma \sqrt{T - t}		\qquad \text{for all } t \in [0, T)$$
By remark \ref{adjustInitData}, we may therefore assume without loss of generality that the initial data is chosen so that
	$$Q(r, 0) \ge r		\qquad \text{for all } r \ge \Gamma \sqrt{ T - t}$$
and
	$$\lim_{r \to \infty} Q(r, 0) - r = \infty$$
Note that this last assumption and the proof of lemma \ref{outerRegEst} imply that $Q(r,t) > r$ for sufficiently large $r$.
The avoidance principle then implies
	$$Q(r,t) \ge r		\qquad \text{for all } r \ge \Gamma \sqrt{T - t}, \quad t \in [0, T)$$
since $Q(r,t) = r$ is a solution to the mean curvature flow equation (\ref{MCF1}).
	
Differentiating the mean curvature flow equation \ref{MCF1} with respect to $r$, it follows that $Q_r$ satisfies
	$$\partial_t Q_r = \frac{  Q_{rrr}}{1 + Q_r^2} - \frac{ 2 Q_{rr}^2 Q_r }{( 1 + Q_r^2)^2} + \frac{ (n-1)}{r} Q_{rr} + (n-1) \left( \frac{ 1}{Q^2} - \frac{1}{r^2} \right) Q_r$$

Observe that when $Q \ge r$, the coefficient $\frac{ 1}{Q^2} - \frac{1}{r^2} \le 0$.
Hence, the maximum principle implies
	$$| Q_r(r,t) | \le \sup_{(r,t) \in \partial_P \Omega} | Q_r(r,t) |		\qquad \text{for all } (r,t) \in \Omega$$
where $\Omega = \Omega( \Gamma, \Upsilon)$ is the spactime domain
	$$\Omega \doteqdot \left \{ (r,t) \in (0, \infty) \times [0, T) \, : \,  \Gamma \sqrt{ T-t} < r < \Upsilon \sqrt{T} \right\}$$
and $\partial_P \Omega$ is the parabolic boundary of this domain
\begin{equation*} \begin{aligned}
	\partial_P \Omega =& \left \{ (r,t) : r \ge \Gamma \sqrt{T - t_0}, t = 0 \right \} \\
	& \cup \left \{ (r,t) : r = \Gamma \sqrt{ T-t} , t \ge 0 \right\}	\\
	 & \cup  \left \{ (r,t) : r = \Upsilon \sqrt{ T}  , t \ge 0 \right\}	\\
\end{aligned} \end{equation*}
By replacing $M$ in remark \ref{derivEstOuter} with a possibly larger constant, we may then assume without loss of generality that
	$$| Q_r(r,t) | \le \sup_{(r,t) \in \partial_P \Omega} | Q_r(r,t) | 
	\le M < \infty	\qquad
	\text{for all } (r,t) \in \Omega , $$
	
In particular, this derivative estimate ensures that the mean curvature flow equation (\ref{MCF2}) for $Q$ is strictly parabolic in the region $\Omega$ with uniform estimates on the ellipticity constants
	$$\frac{1}{1+M^2} \le a^{ij} \le 1$$

\subsection{Finer Estimates in the Parabolic-Outer Overlap}
Recall equation (\ref{MCF1})
	$$\partial_t Q =  \frac{Q_{rr}}{1 + Q_r^2} + \frac{(n-1)}{r} Q_r - \frac{(n-1)}{Q}$$
Define
	$$v(r,t) \doteqdot Q(r,t) - r$$
to be the perturbation of the profile function from the Simons cone.
It follows that $v(r,t)$ solves
\begin{equation} \label{parabEqnPerturbation}
\begin{aligned}
	\partial_t v &= \frac{1}{1 + Q_r^2 } v_{rr} + \frac{(n-1)}{r} v_r - \frac{(n-1)}{r} \left[ \frac{1}{1 + (v/r)} - 1 \right]		\\
	&= \frac{1}{1 + Q_r^2 } v_{rr} + \frac{(n-1)}{r} v_r + \frac{(n-1)}{r^2} v
	- \frac{(n-1)}{r}  \left[ \frac{1}{1 + (v/r)} - 1 + \frac{v}{r} \right]
\end{aligned} \end{equation}
To estimate $Q = r + v$, we shall find a positive supersolution $0 < v^+$ to this equation in the region $\Omega$.

\subsubsection{A Positive Supersolution $v^+$} \label{supersolnSubSubSect}
The search for a positive supersolution $v^+$ is aided by the fact that the function $x \mapsto \frac{1}{1 + x}$ is convex for nonnegative $x$.
Hence, 
	$$- \frac{(n-1)}{r}  \left[ \frac{1}{1 + (v/r)} - 1 - \frac{v}{r} \right] \le 0$$
for nonnegative $v$.

	
\begin{lem} \label{supersolnLem}
\comment{Oddly enough, we don't need the $Q_r$ estimate for the supersolution.}
	For any $\lambda_k = \lambda > 0$ and $C_0 > 0$, define
		$$C_1 \doteqdot \left[  ( 2 \lambda + 1) ( 2 \lambda) + (n-1)(2 \lambda + 1 ) + (n-1) \right] C_0 > 0$$
	Then 
		$$v^+(r,t) \doteqdot C_0 r^{2 \lambda + 1} - C_1 (T-t) r^{2 \lambda -1}$$
	is a supersolution to equation (\ref{parabEqnPerturbation}) on the domain where $v^+ > 0$.
	
	If $C_0 > \overline{C}_k$ and $\Gamma \gg 1$ is sufficiently large depending on $n, k,$ and $C_0$, 
	then
		$$v^+(r,t) > \overline{C}_k r^{2 \lambda +1} 		\qquad \text{for all } r = \Gamma \sqrt{T-t}, \, t \in [0, T)$$
		$$\text{ and } \qquad v^+(r,t) > 0			\qquad \text{for all } r \ge \Gamma \sqrt{T - t}, \, t \in [0, T)$$
\end{lem}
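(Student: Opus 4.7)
The plan is to verify the differential inequality directly, and the choice of $C_1$ is designed precisely to make the leading-order terms cancel. First I would record the derivatives of the candidate barrier:
\begin{align*}
\partial_t v^+ &= C_1\, r^{2\lambda-1},\\
v^+_r &= C_0(2\lambda+1)r^{2\lambda} - C_1(T-t)(2\lambda-1)r^{2\lambda-2},\\
v^+_{rr} &= C_0(2\lambda+1)(2\lambda)r^{2\lambda-1} - C_1(T-t)(2\lambda-1)(2\lambda-2)r^{2\lambda-3}.
\end{align*}

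Next, I would reduce to a simpler linear comparison in two steps. First, since $x \mapsto 1/(1+x)$ is convex on $x \ge 0$, we have $1/(1+v^+/r) - 1 + v^+/r \ge 0$ whenever $v^+ \ge 0$, so the last term in (\ref{parabEqnPerturbation}) evaluated at $v^+$ is non-positive and may be dropped. Second, because $r \ge \Gamma\sqrt{T-t}$ gives $(T-t)r^{2\lambda-3} \le r^{2\lambda-1}/\Gamma^2$, choosing $\Gamma$ large enough depending on $n,\lambda,C_0$ forces $v^+_{rr} \ge 0$, whence $\frac{1}{1+Q_r^2}v^+_{rr} \le v^+_{rr}$. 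The task is thereby reduced to showing
\begin{equation*}
\partial_t v^+ \;\ge\; v^+_{rr} + \frac{n-1}{r}v^+_r + \frac{n-1}{r^2}v^+.
\end{equation*}
Substituting and collecting powers of $r$, the right-hand side equals
\begin{equation*}
C_0\bigl[(2\lambda+1)(2\lambda)+(n-1)(2\lambda+1)+(n-1)\bigr]r^{2\lambda-1} - C_1(T-t)\,\widetilde{C}\, r^{2\lambda-3}
\end{equation*}
where $\widetilde{C} = (2\lambda-1)(2\lambda-2)+(n-1)(2\lambda-1)+(n-1) = 4\lambda^2 + (2n-8)\lambda + 2$. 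By the definition of $C_1$, the leading $r^{2\lambda-1}$ coefficient is exactly $C_1$, matching $\partial_t v^+$; and since $n \ge 4$ and $\lambda > 0$, we have $\widetilde{C} > 0$, so the remainder $C_1\widetilde{C}(T-t)r^{2\lambda-3}$ appears with the correct sign to yield the required inequality.

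Finally, I would verify the two boundary/positivity statements by direct substitution. At $r = \Gamma\sqrt{T-t}$,
\begin{equation*}
v^+(r,t) \;=\; (T-t)^{\lambda+1/2}\,\Gamma^{2\lambda-1}\bigl[C_0\Gamma^2 - C_1\bigr],
\end{equation*}
so the condition $v^+ > \overline{C}_k r^{2\lambda+1} = \overline{C}_k \Gamma^{2\lambda+1}(T-t)^{\lambda+1/2}$ reduces to $(C_0-\overline{C}_k)\Gamma^2 > C_1$, which holds once $C_0 > \overline{C}_k$ and $\Gamma$ is chosen sufficiently large in terms of $n,k,C_0$. For positivity on the full region, the factorization $v^+(r,t) = r^{2\lambda-1}\bigl[C_0 r^2 - C_1(T-t)\bigr]$ shows $v^+ > 0$ whenever $r^2 > (C_1/C_0)(T-t)$, which is implied by $r \ge \Gamma\sqrt{T-t}$ under the same smallness condition on $\Gamma^{-1}$. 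There is no real obstacle: the construction is a standard barrier argument, and the only delicate point is keeping track of the sign of $\widetilde{C}$, which is guaranteed by $n \ge 4$.
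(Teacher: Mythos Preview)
Your argument is essentially correct and parallels the paper's, but there is a small logical slip: the lemma claims the supersolution property on the whole set $\{v^+>0\}$, independently of $\Gamma$, whereas your step ``$r\ge\Gamma\sqrt{T-t}$ with $\Gamma$ large forces $v^+_{rr}\ge 0$'' only secures the inequality on $\{r\ge\Gamma\sqrt{T-t}\}$. The fix is immediate: on $\{v^+>0\}$ one has $C_0 r^2 > C_1(T-t)$, and since $(2\lambda+1)(2\lambda)\ge(2\lambda-1)(2\lambda-2)$ for $\lambda\ge 1/4$ (hence for every $\lambda_k$ with $k\ge 2$), it follows directly that $v^+_{rr}\ge 0$ there, after which your reduction $\frac{1}{1+Q_r^2}v^+_{rr}\le v^+_{rr}$ is legitimate on the full domain.

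The paper handles the diffusion coefficient slightly differently: rather than first establishing $v^+_{rr}\ge 0$, it keeps $\frac{1}{1+Q_r^2}$ attached to each second-derivative term and bounds the two resulting brackets separately. For the $C_0$-bracket one uses $\frac{1}{1+Q_r^2}\le 1$ to recover exactly $C_1$; for the $C_1(T-t)$-bracket one rewrites it as $\frac{1}{1+Q_r^2}(2\lambda-1)^2+\bigl(n-1-\tfrac{1}{1+Q_r^2}\bigr)(2\lambda-1)+(n-1)$ and uses $2\lambda_k-1\ge 0$ together with $n\ge 4$ to see it is nonnegative. Both routes ultimately rely on the same constraint $\lambda_k\ge 1/2$ coming from $k\ge 2$, and once your slip is repaired the two arguments are equivalent.
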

\begin{proof}
	As noted above, when $v^+ > 0$,
		$$- \frac{(n-1)}{r}  \left[ \frac{1}{1 + (v/r)} - 1 - \frac{v}{r} \right] \le 0$$
	Hence, it suffices to show that $v^+$ is a supersolution to 
		$$\partial_t v  = \frac{1}{1 + Q_r^2} v_{rr} + \frac{(n-1)}{r} v_r + \frac{(n-1)}{r^2} v$$
	A direct computation shows that
	\begin{equation*} \begin{aligned}
		&\partial_t v^+ - \left( \frac{1}{1 + Q_r^2} v^+_{rr} + \frac{(n-1)}{r} v^+_r + \frac{(n-1)}{r^2} v^+ \right) \\
		=& C_1 r^{2 \lambda - 1} \\
		&+ C_1 ( T-t) r^{2 \lambda - 3} \left( \frac{1}{1 + Q_r^2} ( 2 \lambda - 1) ( 2 \lambda - 2) + (n-1) (2 \lambda - 1) + (n-1) \right)  \\
		&- C_0 r^{ 2\lambda -1} \left( \frac{1}{1 + Q_r^2} ( 2 \lambda + 1) ( 2 \lambda ) + (n-1) ( 2 \lambda + 1) + ( n-1) \right) \\
		\ge& C_1 r^{2 \lambda - 1} 
		- C_0 r^{ 2\lambda -1} \left( ( 2 \lambda + 1) ( 2 \lambda ) + (n-1) ( 2 \lambda + 1) + ( n-1) \right) \\
	\end{aligned} \end{equation*}
	since the term 
		$$ C_1 ( T-t) r^{2 \lambda - 3} \left( \frac{1}{1 + Q_r^2} ( 2 \lambda - 1) ( 2 \lambda - 2) + (n-1) (2 \lambda - 1) + (n-1) \right) \ge 0$$
	is nonnegative for $n \ge 4$ and $k \ge 2$.
	Indeed, as also noted in appendix \ref{appendixConstants},
	\begin{gather*} 
		2 \lambda_k -1
		\ge 2 \lambda_2 -1	
		= 4 - 1 + \alpha -1	
		= 2 - | \alpha | 		
		\ge 0
	\end{gather*}
	and
	\begin{equation*} \begin{aligned}
	&\left( \frac{1}{1 + Q_r^2} ( 2 \lambda - 1) ( 2 \lambda - 2) + (n-1) (2 \lambda - 1) + (n-1) \right)	\\
	=& \frac{1}{1 + Q_r^2} ( 2 \lambda - 1)^2 + \left( n -1 - \frac{ 1}{1 + Q_r^2} \right) (2 \lambda - 1) + (n-1) \\
	\ge& ( n - 2)( 2 \lambda - 1) + (n-1) 		\\
	\ge& 0 
	\end{aligned} \end{equation*}
	By the definition of $C_1$, it follows that $v^+$ is a supersolution on the domain where $v^+ > 0$.
	
	Finally, we confirm that $v^+$ satisfies the claimed estimates when $\Gamma \gg 1$ is sufficiently large.
	Let $\ol{C} = \ol{C}_k$. At $r = \Gamma \sqrt{T - t}$,
	\begin{equation*} \begin{aligned}
		&v^+(r,t) - \ol{C} r^{2 \lambda + 1} \\
		=& ( C_0 - \ol{C} ) r^{2 \lambda + 1} - C_1 (T-t) r^{2 \lambda - 1} \\
		\ge& ( C_0 - \ol{C} ) r^{2 \lambda + 1} - C_1 ( T-t) ( \Gamma \sqrt{T-t})^{-2} r^{2 \lambda+1} \\
		=& \left( C_0 - \ol{C} - \frac{ C_1}{\Gamma^2} \right) r^{2 \lambda + 1}	\\
		\ge & 0 \\
	\end{aligned} \end{equation*}
	if $\Gamma \gg 1$ is sufficiently large, depending on $n,k, C_0$, such that
		$$C_0 - \ol{C} 
		\ge \frac{ C_1}{\Gamma^2} 
		= \frac{ \left[  ( 2 \lambda + 1) ( 2 \lambda) + (n-1)(2 \lambda + 1 ) + (n-1) \right] C_0 }{\Gamma^2}$$

	Additionally, since 
		$$C_0 - C_1 (T-t) r^{-2}$$
	is increasing in $r$ on the domain $r > 0$,
	the fact that $\ol{C} \ge 0$ and $v^+ \ge \ol{C} r^{2 \lambda + 1}$ at $r = \Gamma \sqrt{T-t}$ implies that
	$v^+(r,t) > 0$ for all $r \ge \Gamma \sqrt{T-t}$.
	In fact, $v^+(r,t) \ge \ol{C} r^{2 \lambda+1}$ for all $r \ge \Gamma \sqrt{T-t}$.
	Indeed,
	\begin{equation*} \begin{aligned}
		&v^+ - \ol{C} r^{2 \lambda + 1}	\\
		=& C_0 r^{2 \lambda + 1} - C_1(T-t) r^{2 \lambda - 1} - \ol{C} r^{2 \lambda +1}	\\
		=& r^{2 \lambda + 1} \left( C_0 - \ol{C} - C_1(T - t) r^{-2} \right)\\
		\ge& r^{2 \lambda + 1} \left( C_0 - \ol{C} - C_1  \Gamma^{-2} \right) \\
		\ge& 0
	\end{aligned} \end{equation*}
\end{proof}


\subsubsection{Interior Estimates} \label{intEstsOuterSubsubsec}

\begin{lem} \label{intEstsOuterLem}
	For any $C_0, \Gamma, M > 0$, there exists a constant $C_1 = C_1( n, k, \Gamma, M)$ such that
	if $v(r,t) = Q(r,t) -r$ solves (\ref{parabEqnPerturbation}) with
		$$0 \le v \le  C_0 r^{2 \lambda_k + 1}		\quad \text{ and } \quad 
		| 1 + v_r| = | Q_r|  \le  M						\qquad \text{for all }  r \ge \Gamma \sqrt{T- t}, \ t \in [0, T)$$
	then
		$$r | v_r | + r^2 | v_{rr} | \le  C_1 C_0 r^{2 \lambda_k+1}$$
	for all $\frac{15}{16} T < t  < T$ and  $2 \sqrt{2}  \Gamma \sqrt{ T - t} < r < \Gamma \sqrt{T}$.
\end{lem}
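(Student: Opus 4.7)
The plan is to prove this by a parabolic rescaling argument combined with standard interior estimates for quasilinear uniformly parabolic equations. Fix a base point $(r_0, t_0)$ with $2\sqrt{2}\Gamma \sqrt{T-t_0} < r_0 < \Gamma\sqrt{T}$ and $\tfrac{15}{16}T < t_0 < T$, and define
$$\tilde v(\rho,\tau) \doteqdot r_0^{-(2\lambda_k+1)}\, v\bigl(r_0 \rho,\ t_0 + r_0^2 \tau\bigr).$$
Inserting this ansatz into equation (\ref{parabEqnPerturbation}) and using $\tfrac{1}{1+x}-1 = -\tfrac{x}{1+x}$ to rewrite the nonlinear lower-order term, one computes that $\tilde v$ solves a linear-looking parabolic equation of the form
$$\partial_\tau \tilde v = \frac{1}{1 + Q_r^2}\,\tilde v_{\rho\rho} + \frac{n-1}{\rho}\,\tilde v_{\rho} + \frac{n-1}{\rho^2\bigl(1 + r_0^{2\lambda_k}\tilde v/\rho\bigr)}\,\tilde v,$$
where the hypotheses $|Q_r|\le M$ and $0\le v\le C_0 r^{2\lambda_k+1}$ (together with $r_0 \le \Gamma\sqrt T$) translate into two-sided bounds on the leading coefficient ($\tfrac{1}{1+M^2}\le \tfrac{1}{1+Q_r^2}\le 1$) and uniform bounds on the remaining coefficients, on any cylinder $\{\rho\in[\tfrac34,\tfrac54]\}$ away from $\rho=0$. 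The $L^\infty$ bound $|\tilde v(\rho,\tau)| \le C_0 \rho^{2\lambda_k+1}$ is immediate from the hypothesis.

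Next, the plan is to choose a parabolic cylinder $\mathcal P \doteqdot [\tfrac34, \tfrac54]\times[-\delta,0]$ with $\delta = \delta(\Gamma) > 0$ small enough so that $\mathcal P$ corresponds to a region in the $(r,t)$ variables lying entirely inside the hypothesis domain $\{r\ge \Gamma\sqrt{T-t}\}\cap\{t\ge 0\}$. Using $r_0^2 > 8\Gamma^2(T-t_0)$ and $\rho\ge \tfrac34$, the worst case gives $\Gamma^2(T-t) \le \tfrac{r_0^2}{8} + \Gamma^2 r_0^2 \delta$, which is $< r^2 = r_0^2\rho^2$ provided $\delta < 7/(16\Gamma^2)$; the hypothesis $t_0 > \tfrac{15}{16}T$ together with $r_0^2\le \Gamma^2 T$ similarly guarantees $t_0 + r_0^2\tau > 0$ throughout $\mathcal P$. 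With this choice, the rescaled equation is uniformly parabolic on $\mathcal P$ with coefficients bounded in terms of $n$, $k$, $M$, $C_0$, $\Gamma$.

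Now I would apply interior parabolic estimates to $\tilde v$ on a smaller sub-cylinder (say $[\tfrac78,\tfrac98]\times[-\delta/2,0]$). Concretely, theorem 1.1 of chapter VI of \cite{LSU88} (already invoked in the proof of theorem \ref{innerRegSmoothCgce}) gives an interior $C^{1+\beta,(1+\beta)/2}$ bound on $\tilde v$ from the $L^\infty$ bound; once $\tilde v_\rho$ is Hölder continuous, the coefficient $1/(1+Q_r^2)$ inherits the same regularity, so interior Schauder estimates upgrade this to a $C^{2+\beta,(2+\beta)/2}$ bound. Both bounds are controlled by a constant times $\|\tilde v\|_{C^0(\mathcal P)}\le C_0$, with the constant depending only on $n,k,M,\Gamma$. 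Unwinding the rescaling,
$$|v_r(r_0,t_0)| = r_0^{2\lambda_k}\,|\tilde v_\rho(1,0)| \lesssim C_0\, r_0^{2\lambda_k}, \qquad |v_{rr}(r_0,t_0)| = r_0^{2\lambda_k-1}\,|\tilde v_{\rho\rho}(1,0)|\lesssim C_0\, r_0^{2\lambda_k-1},$$
which is exactly the desired $r|v_r| + r^2|v_{rr}| \lesssim C_0 r^{2\lambda_k+1}$ at $(r_0,t_0)$.

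The only nontrivial step, and the main bookkeeping obstacle, is verifying geometrically that the rescaled cylinder $\mathcal P$ lies inside the hypothesis region $\{r\ge \Gamma\sqrt{T-t}\}\cap\{t\ge 0\}$; this is precisely where the quantitative conditions $r_0 > 2\sqrt{2}\Gamma\sqrt{T-t_0}$ and $t_0 > \tfrac{15}{16}T$ are used, and it dictates the $\Gamma$-dependence of $\delta$, hence of the constant $C_1$. Once this is in hand the rest is a standard application of interior parabolic theory.
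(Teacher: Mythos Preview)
Your proposal is correct and follows essentially the same approach as the paper: both define $\tilde v(\rho,\tau) = r_0^{-(2\lambda_k+1)} v(r_0\rho, t_0 + r_0^2\tau)$, verify that a parabolic cylinder of $\rho$-width of order one and $\tau$-height of order $1/\Gamma^2$ maps into the hypothesis region (the paper uses $[\tfrac12,\tfrac32]\times[-\tfrac{1}{8\Gamma^2},0]$ where you use $[\tfrac34,\tfrac54]\times[-\delta,0]$ with $\delta<\tfrac{7}{16\Gamma^2}$), observe that the rescaled equation has uniformly bounded coefficients thanks to $|Q_r|\le M$ and $v\ge 0$, and then invoke interior parabolic estimates to bound $|\tilde v_\rho(1,0)|+|\tilde v_{\rho\rho}(1,0)|$ by a constant times $\|\tilde v\|_{L^\infty}\le C_0(\tfrac32)^{2\lambda_k+1}$. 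Your two-step description (first $C^{1+\beta}$ via \cite{LSU88}, then Schauder) makes explicit what the paper compresses into a single citation, but the argument is the same.
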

\begin{proof}
	The proof will employ a rescaling argument.
	Let $r_0, t_0$ be such that
		$$\frac{15}{16}T   < t_0 < T		\qquad \text{ and } \qquad 2 \sqrt{2} \Gamma \sqrt{ T - t_0} < r_0 < \Gamma \sqrt{T}$$
	Write $\lambda = \lambda_k$ and define
		$$W( \rho, \tau) \doteqdot r_0^{-2 \lambda - 1} v( r_0 \rho, t_0 + \tau r_0^2)$$
	Then $W$ solves
	\begin{equation} \label{parabEqnPerturbationRescaled}
		\partial_\tau W 
		= \frac{ W_{\rho \rho}}{1 + ( 1 + r_0^{2 \lambda} W_\rho)^2}
		+ \frac{ (n-1)}{\rho} W_\rho
		+ \frac{ (n-1) W}{\rho ( \rho + r_0^{2 \lambda} W) }
	\end{equation}
	
	If $\frac{1}{2} \le \rho \le \frac{3}{2}$ and $-\frac{1}{8 \Gamma^2}  \le \tau \le 0$
	then
		$$\sqrt{2} \Gamma \sqrt{ T - t_0} 
		\le r_0 \rho \le \frac{3}{2} \Gamma \sqrt{T}$$
	Moreover,
	\begin{gather*} 
		\frac{ \Gamma^2}{ \left( \frac{1}{4} - \Gamma^2 \frac{1}{8 \Gamma^2} \right) } ( T - t_0) 		
		= 8 \Gamma^2 ( T - t_0)
		\le r_0^2		\\
		\implies
		\Gamma^2 \left ( T - t_0 + \frac{1}{8 \Gamma^2} r_0^2 \right ) \le \frac{1}{4} r_0^2		\\
		\implies \Gamma \sqrt{ T - (t_0 + \tau r_0^2)} \le \Gamma \sqrt{ T - t_0 + \frac{1}{8 \Gamma^2} r_0^2} \le \frac{1}{2} r_0 \le r_0 \rho
	\end{gather*}
	and
	\begin{gather*}
		T 
		\ge t_0 + \tau r_0^2
		\ge \frac{15}{16} T - \frac{1}{8 \Gamma^2} \Gamma^2 T 
		> 0
	\end{gather*}
	Therefore,
	if $\frac{1}{2} \le \rho \le \frac{3}{2}$ and $-\frac{1}{8 \Gamma^2}\le \tau \le 0$,
	then $r = r_0 \rho$ and $t = t_0 + \tau r_0^2$ are in the domain where the assumed estimates on $v$ apply.
	In particular,
	\begin{gather*}
		0 \le W(\rho, \tau) \le r_0^{- 2\lambda - 1} C_0 ( r_0 \rho)^{2 \lambda + 1} \le C_0 \left( \frac{3}{2} \right)^{2 \lambda + 1} \\
		\text{ and } \qquad | 1 +  r_0^{2 \lambda}  W_\rho | =  | 1 + v_r | \le M
	\end{gather*}
	These estimates imply that the coefficients in equation (\ref{parabEqnPerturbationRescaled})
	are uniformly bounded for $ \frac{1}{2} \le \rho \le \frac{3}{2}$ and $- \frac{1}{8 \Gamma^2} \le \tau \le 0$
	with bounds depending only on $n,k$ and $M$.
	For example,
		$$0 \le \frac{2(n-1)}{\rho ( \rho + r_0^{2 \lambda} W)} \le \frac{ 2(n-1)}{\rho^2} \le 8(n-1)$$
	and
		$$\frac{1}{1+M^2} \le \frac{1}{ 1 + (1 + r_0^{2 \lambda} W_\rho)^2} \le 1$$
	Hence, interior estimates for parabolic equations (see e.g. ~\cite{LSU88}) imply that for some constant $C' = C'(n, k , \Gamma, M)$ depending only on $n, k, \Gamma, M$,
	\begin{equation*} \begin{aligned} 
		 r_0^{-2 \lambda +1} |v_{rr}(r_0, t_0) | + r_0^{- 2 \lambda} | v_r (r_0, t_0)|  
		&= | W_{\rho \rho} (0, 0) | + | W_{\rho} (0,0)| \\
		&\le C'   \sup_{(\rho, \tau) \in \left[ \frac{1}{2}, \frac{3}{2} \right] \times \left[ - \frac{1}{8 \Gamma^2} , 0 \right] } | W( \rho, \tau) | \\
		&= C' \sup_{(r, t) \in \left[ \frac{r_0}{2}, \frac{3 r_0}{2} \right] \times \left[ t_0 - \frac{1}{8 \Gamma^2} r_0^2, t_0 \right] } r_0^{-  2 \lambda -1} | v(r,t) | \\
		&\le C' \sup_{r \le \frac{3}{2} r_0 } r_0^{-  2 \lambda -1} C_0 r^{2 \lambda + 1} \\ 
		&= C' C_0 \left( \frac{3}{2} \right)^{2 \lambda + 1} 
	\end{aligned} \end{equation*}
	and so
		$$r_0 | v_r(r_0, t_0) | + r_0^2 | v_{rr}(r_0, t_0) |  \le  M' C_0 \left( \frac{3}{2} \right)^{2 \lambda + 1} r_0^{2 \lambda_k+1}$$
	Since $r_0, t_0$ were arbitrary, the statement of the lemma follows with $C_1  = C' \left ( \frac{3}{2} \right)^{2 \lambda + 1}$.
\end{proof}

The estimates of lemma \ref{intEstsOuterLem} yield a bound on the mean curvature.
\begin{cor} \label{outerParabOverlapEst}
	Let $n \ge 4$ and $k \ge 2$.
	For any $C_0, \Gamma, M > 0$, 
	there exists a constant $C_1 = C_1( n, k, T, \Gamma, M, C_0) < \infty$ such that
	if $v(r,t) = Q(r,t) -r$ solves (\ref{parabEqnPerturbation}) with
		$$0 \le v \le  C_0 r^{2 \lambda_k + 1}		\quad \text{ and } \quad 
		| 1 + v_r| = | Q_r|  \le  M						\qquad \text{for all }  r \ge \Gamma \sqrt{T- t}, \ t \in [0, T)$$
	then
		$$|H_{\Sigma(t)}| \le C_1$$
	for all
		$\frac{15}{16} T < t < T$  and $2 \sqrt{2} \Gamma \sqrt{ T - t} < | \mathbf x |  < \Gamma \sqrt{T}$.
\end{cor}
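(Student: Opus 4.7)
The plan is to derive the mean curvature bound directly from the $C^2$ estimates for $v = Q-r$ supplied by lemma \ref{intEstsOuterLem}, after exhibiting an algebraic cancellation between the two ``dangerous'' first-order terms of $H$.

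Write $Q = r + v$ so that $Q_r = 1 + v_r$, $Q_{rr} = v_{rr}$. Starting from the formula
	$$H = \frac{1}{\sqrt{1+Q_r^2}}\left(\frac{Q_{rr}}{1+Q_r^2} + \frac{n-1}{r}Q_r - \frac{n-1}{Q}\right),$$
I would rewrite the potentially-unbounded part as
	$$\frac{n-1}{r}Q_r - \frac{n-1}{Q} = (n-1)\left(\frac{v_r}{r} + \frac{1}{r} - \frac{1}{r+v}\right) = (n-1)\left(\frac{v_r}{r} + \frac{v}{r(r+v)}\right).$$
Since $v \ge 0$, one has $\frac{v}{r(r+v)} \le \frac{v}{r^2}$, and the gradient hypothesis $|Q_r|\le M$ gives $(1+Q_r^2)^{-1} \le 1$. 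Therefore
	$$|H| \le |v_{rr}| + (n-1)\left(\frac{|v_r|}{r} + \frac{v}{r^2}\right).$$

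Next I plug in the three estimates available in the relevant sub-region $\tfrac{15}{16}T < t < T$, $2\sqrt{2}\Gamma\sqrt{T-t} < r < \Gamma\sqrt{T}$: lemma \ref{intEstsOuterLem} gives $|v_{rr}| \le C_1 C_0 r^{2\lambda_k-1}$ and $|v_r|/r \le C_1 C_0 r^{2\lambda_k-1}$, while the standing hypothesis $v \le C_0 r^{2\lambda_k+1}$ gives $v/r^2 \le C_0 r^{2\lambda_k-1}$. Combining,
	$$|H| \le C(n,k,\Gamma,M,C_0)\, r^{2\lambda_k - 1}.$$

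The only remaining issue is to ensure the right-hand side is bounded on the domain in question. By the computation already recorded in subsection \ref{supersolnSubSubSect}, $2\lambda_k - 1 \ge 2\lambda_2 - 1 = 2 - |\alpha| \ge 0$ for $n\ge 4$ and $k\ge 2$. Hence $r^{2\lambda_k-1}$ is an increasing (or constant) function of $r$, and on $r \le \Gamma\sqrt{T}$ we have $r^{2\lambda_k-1} \le (\Gamma\sqrt{T})^{2\lambda_k-1}$, yielding a bound depending only on $n,k,T,\Gamma,M,C_0$. No step is really an obstacle; the only point that requires care is the algebraic cancellation producing the factor $v_r/r + v/(r(r+v))$ in place of the individually unbounded $Q_r/r$ and $1/Q$, and noting that $2\lambda_k - 1 \ge 0$ so that the resulting power of $r$ is controlled from above on the outer-parabolic overlap.
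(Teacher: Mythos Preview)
Your proof is correct and follows essentially the same route as the paper: rewrite $H$ in terms of $v$, invoke lemma \ref{intEstsOuterLem} for the derivative bounds, and use $2\lambda_k - 1 \ge 0$ to cap the resulting power of $r$ on the overlap region. The only cosmetic difference is that you handle the zeroth-order term via the direct identity $\tfrac{1}{r} - \tfrac{1}{r+v} = \tfrac{v}{r(r+v)} \le \tfrac{v}{r^2}$ using $v \ge 0$, whereas the paper bounds $\left|\tfrac{1}{1+v/r} - 1\right|$ by local Lipschitzness of $x \mapsto \tfrac{1}{1+x}$ after first noting $|v/r|$ is bounded; your version is slightly more direct.
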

\begin{proof}
	Recall
	\begin{equation*} \begin{aligned}
		| H_{\Sigma(t)} |	
		&= \frac{1}{ \sqrt{1 + Q_r^2} } \left| \frac{Q_{rr}}{1 + Q_r^2} + \frac{n-1}{r} Q_r - \frac{n-1}{Q} \right| \\
		&= \frac{1}{ \sqrt{1 + Q_r^2} } \left| \frac{v_{rr}}{1 + Q_r^2} + \frac{ (n-1)}{r} v_r - \frac{n-1}{r} \left( \frac{1}{ 1 +\frac{v}{r} } - 1 \right) \right| \\
		& \le  | v_{rr} | + (n-1) \frac{ | v_r |}{r} + \frac{n-1}{r} \left| \frac{1}{ 1 +\frac{v}{r} } - 1 \right| \\
	\end{aligned} \end{equation*} 
	For $\frac{15}{16} T < t < T$ and $2 \sqrt{2} \Gamma \sqrt{T - t} < r < \Gamma \sqrt{T}$,
	lemma \ref{intEstsOuterLem} implies
		$$| v_{rr} | \lesssim_{n,k,\Gamma,M, C_0} r^{2 \lambda_k -1} 	\qquad 
		| v_r| \lesssim_{n,k,\Gamma,M, C_0} r^{2 \lambda_k} 	\qquad 
		\left| \frac{v}{r} \right| \le C_0 r^{2 \lambda_k} \le C_0 \left( \Gamma \sqrt{T} \right)^{2 \lambda_k}$$
	Because $x \mapsto \frac{1}{1 +x}$ is locally Lipschitz,
		$$\left| \frac{1}{ 1 +\frac{v}{r} } - 1 \right| \lesssim_{k, T, \Gamma, C_0} \left| \frac{v}{r} \right| \le C_0 r^{2 \lambda_k}$$
	Noting that $2 \lambda_k -1 \ge 0$, it now follows that
		$$| H_{\Sigma(t)} | \lesssim_{n,k, T, \Gamma, M, C_0} r^{2 \lambda_k - 1} \le \left( \Gamma \sqrt{T} \right)^{2 \lambda_k - 1} < \infty$$
	for all $\frac{15}{16} T < t < T$ and $2 \sqrt{2} \Gamma \sqrt{T - t} < | \mathbf x| < \Gamma \sqrt{T}$.
\end{proof}

We may now present the proof of the main theorem.
\begin{proof}
	(of theorem \ref{mainThmFull})
	Let $n \ge 4$ and let $k > 2$ be an even integer.
	Note that it suffices to bound the mean curvature for times $t$ in a neighborhood of the singular time $T$.
	In the notation of lemma \ref{supersolnLem}, let $C_0 = \ol{C}_k+1$ 
	and take $\Gamma_0 \gg 1$ sufficiently large such that 
	$$v^+(r,t) > \ol{C}_k r^{2 \lambda_k +1} 		\qquad \text{for all } r = \Gamma_0 \sqrt{ T - t}, \ t \in [0, T)$$
	By remark \ref{adjustInitData}, we may assume without loss of generality that the initial profile function $Q(r,0)$ is taken such that
	\begin{equation*} \begin{aligned}
		&r \le Q(r, 0) \le r + v^+(r,0)	\qquad & \text{for all } r \ge \Gamma_0 \sqrt{T}, \\
		& 1 \le Q_r(r,0) \le M_0 < \infty			&	\text{for all } r \ge \Gamma_0 \sqrt{T},  \\
		\text{and }& \lim_{r \to \infty} Q(r,0) - r = \infty		&	\\
	\end{aligned} \end{equation*}
	for some constant $M_0$.
	By lemma \ref{outerRegEst},
	there exists $\Upsilon_0 \ge \frac{12}{5} \Gamma_0$ sufficiently large depending on $M_0$ and $n$ 
	such that the mean curvature is uniformly bounded for $r \ge \Upsilon_0 \sqrt{T}$ and $t \in \left[ \frac{T}{2} , T \right)$.
	
	The coarse barrier arguments in subsection \ref{coarseBarriers} now imply
	\begin{equation*} \begin{aligned}
		&r \le Q(r, t) 	 & \text{for all } r \ge \Gamma_0 \sqrt{T-t} \text{ and } t \in [0, T), \\
		\text{and } &  |Q_r(r,t)| \le M_1 < \infty	\qquad		&	\text{for all } r \ge \Gamma_0 \sqrt{T-t} \text{ and } t \in [0, T).  \\
	\end{aligned} \end{equation*}
	for some constant $M_1 \ge M_0$.
	Lemma \ref{supersolnLem} additionally implies that
		$$Q(r,t) \le r + v^+(r,t)		\qquad \text{for all } r \ge \Gamma_0 \sqrt{T-t}, \ t \in [0, T).$$
	In particular, since $\Upsilon_0 \ge \frac{12}{5} \Gamma_0 > \Gamma_0$, these estimates all hold for $r \ge \Upsilon_0 \sqrt{ T - t}$.
	Corollary \ref{outerParabOverlapEst} with $\Gamma = \Upsilon_0$ now applies to give uniform mean curvature bounds
	on the domain $r \ge 2 \sqrt{2} \Upsilon_0 \sqrt{T - t}$ and $t \in \left[ \frac{15}{16} T, T \right)$.
	
	By remark \ref{paramsThatWork}, there exists $a \in \left( | \alpha|, | \alpha| + 1 \right)$ such that
		$$\lambda_k \left( 1 - \frac{a}{ 1 + | \alpha| } \right) - \frac{1}{2} \ge 0$$
	Finally, corollary \ref{bddHInner+Parab2} with $\Gamma = 2 \sqrt{2} \Upsilon_0$ uniformly bounds the mean curvature on the remainder of the evolving hypersurface.
\end{proof}

\subsection{Compactifying the Vel{\'a}zquez Mean Curvature Flow Solutions} \label{compactifying}

To conclude, we sketch how one might construct \textit{closed} mean curvature flow solutions that exhibit the same dynamics as the Vel{\'a}zquez mean curvature flow solutions.
Vel{\'a}zquez's existence result follows from a topological degree argument applied to a suitably defined disks' worth of initial hypersurfaces (denoted $\Gamma_\eta(\alpha)$ in section 3 of ~\cite{V94}).
For $0 < R_1 < R_2$ sufficiently large and $C>0$ sufficiently large, adjust the profile functions 
 $Q(r,0)$ of these initial hypersurfaces in the region where $r \ge R_1$ 
 so that the initial hypersurfaces remain smooth but
	$$Q(r,0) = \sqrt{2(2n-1)C -r^2}		\qquad \text{for } R_2 \le r \le \sqrt{2(2n-1)C}$$
Such a choice is motivated by the profile function for the shrinking spheres as given in subsubsection \ref{shrinkingSphere}.

For suitably chosen $R_1, R_2,$ and $C$, 
one may again argue as in lemma \ref{outerRegEst} and use the interior estimates of ~\cite{EH91}
to control the solution in the outer region up to say time $t=1$.
One must then check that such estimates in the outer region suffice to reproduce the results in section 4 of ~\cite{V94}.
Indeed, the exponential weight $e^{-r^2/4}$ that appears in the integral estimates within section 4 of ~\cite{V94} 
suggests that the results in section 4 of ~\cite{V94} will continue to hold with this new assignment of initial data.
For Ricci flow, ~\cite{Stolarski19} proves similar estimates to obtain closed Ricci flow solutions analogous to Vel{\'a}zquez's mean curvature flow solutions. 
The remaining arguments in ~\cite{V94} and this paper then follow almost immediately
to give the existence of closed mean curvature flow solutions in $\mathbb{R}^N$ ($N \ge 8$)
that become singular at some time $T < 1$ and have uniformly bounded mean curvature.

\appendix

\section{Computations for $O(n) \times O(n)$-Invariant Hypersurfacees} \label{appendixComputations}

Consider parametrizing an $O(n) \times O(n)$-invariant hypersurface $\Sigma^{2n-1}$ in $\mathbb{R}^{n } \times \mathbb{R}^n$ by
	$$F : \mathbb{R}^n \times \mathbb{S}^{n-1} \to \mathbb{R}^n \times \mathbb{R}^n$$
	$$F( \mathbf x, \mathbf \theta) = ( \mathbf x, Q( | \mathbf x|) \mathbf \theta) $$
We will occasionally write
	$$\mathbf y = Q( | \mathbf x|) \theta		\qquad | \mathbf y | = Q( | \mathbf x |)$$
Recall too that $r = | \mathbf x|$ and $' = \frac{\partial}{\partial r}$ denotes derivatives with respect to $r$.
\begin{prop} \label{tangentVecs}
	The tangent vectors to the hypersurface are
		$$\frac{ \partial F}{\partial \theta^i} 
		= \left( \mathbf 0, Q(| \mathbf x|) \frac{ \partial }{\partial \theta^i} \right) 
		= \mathbf 0 \oplus Q( |\mathbf x |) \frac{ \partial }{\partial \theta^i}$$
		$$\frac{ \partial F}{\partial x^i} 
		= \frac{ \partial}{\partial x^i} \oplus Q'( |\mathbf x|) \frac{ x^i }{ | \mathbf x |} \theta
		= \frac{ \partial}{\partial x^i} \oplus \frac{Q'( |\mathbf x|)}{ Q( | \mathbf x | )} \frac{ x^i }{ | \mathbf x |} \mathbf y$$
\end{prop}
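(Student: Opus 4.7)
The plan is to obtain both formulas by a direct componentwise application of the chain rule, treating the parametrization
\[
F(\mathbf{x}, \theta) = (\mathbf{x}, Q(|\mathbf{x}|)\theta)
\]
as a map into the direct sum $\mathbb{R}^n \oplus \mathbb{R}^n$, so that partial derivatives may be computed slot by slot.

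For $\frac{\partial F}{\partial \theta^i}$, I would observe that the first $\mathbb{R}^n$ slot is the constant-in-$\theta$ map $\mathbf{x}$, hence contributes $\mathbf{0}$. The second slot is $Q(|\mathbf{x}|)\theta$; since $Q(|\mathbf{x}|)$ does not depend on $\theta$, differentiating gives $Q(|\mathbf{x}|)\,\partial/\partial\theta^i$. Combining these yields the first stated formula immediately.

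For $\frac{\partial F}{\partial x^i}$, the first slot is just $\mathbf{x}$, whose $x^i$-derivative is the coordinate vector $\partial/\partial x^i$. For the second slot, I would apply the chain rule to $Q(|\mathbf{x}|)\theta$, using $\frac{\partial |\mathbf{x}|}{\partial x^i} = x^i/|\mathbf{x}|$, to obtain $Q'(|\mathbf{x}|)\,(x^i/|\mathbf{x}|)\,\theta$. Rewriting $\theta = \mathbf{y}/Q(|\mathbf{x}|)$ produces the equivalent second expression in the statement.

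There is essentially no obstacle; this is a routine chain-rule computation whose sole purpose is to fix notation for the subsequent derivation of $g_\Sigma$, $A_\Sigma$, and $H$ in the appendix. The only mild care needed is recognizing that $Q(|\mathbf{x}|)$ is independent of $\theta$ and that the parametrization splits cleanly across the two $\mathbb{R}^n$ factors, which makes the two slots decouple.
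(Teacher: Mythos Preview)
Your proposal is correct; the paper in fact states this proposition without proof, treating the tangent-vector formulas as immediate from the chain rule applied to $F(\mathbf{x},\theta)=(\mathbf{x},Q(|\mathbf{x}|)\theta)$, which is precisely the computation you outline.
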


\begin{prop} \label{unitNormal}
	The unit normal $\nu_\Sigma$ to $\Sigma$ at the point $( \mathbf x , \mathbf y ) = ( \mathbf x, Q( | \mathbf x |) \theta) = F( \mathbf x, \theta)$ is
		$$ \nu_\Sigma ( \mathbf x, \mathbf y ) 
		= \frac{ \left( - Q'( | \mathbf x |) \frac{ \mathbf x}{ |\mathbf x |}, \frac{ \mathbf y}{ | \mathbf y |} \right)}{ \sqrt{ 1 + Q'( | \mathbf x |)^2} }
		= \frac{ \left( - Q'( | \mathbf x |) \frac{ \mathbf x}{ |\mathbf x |}, \theta \right)}{ \sqrt{ 1 + Q'( | \mathbf x |)^2} }$$
		
	In particular,
		$$ \nu_\Sigma( \mathbf x, \mathbf y ) \cdot ( \mathbf x, \mathbf y) 
		= \frac{ - Q'( | \mathbf x |) | \mathbf x | + Q( | \mathbf x |) }{\sqrt{ 1 + Q' ( | \mathbf x |)^2 }}$$
\end{prop}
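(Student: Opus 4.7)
The plan is to verify the stated formula for $\nu_\Sigma$ by checking the three defining properties: it is orthogonal to the tangent space of $\Sigma$, it has unit length, and then read off the scalar product $\nu_\Sigma \cdot (\mathbf x, \mathbf y)$ by direct substitution. Since Proposition \ref{tangentVecs} has already supplied an explicit frame $\{\partial F/\partial x^i, \partial F/\partial \theta^i\}$ for $T\Sigma$, no intrinsic geometry is needed — everything reduces to inner products in $\mathbb{R}^n \times \mathbb{R}^n$.

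First I would test orthogonality against the ``spherical'' tangent vectors. For the candidate $\nu = (1+Q'^2)^{-1/2}\bigl(-Q'\,\mathbf x/|\mathbf x|,\,\theta\bigr)$, pairing with $\partial F/\partial \theta^i = (\mathbf 0, Q\,\partial/\partial\theta^i)$ yields $Q(1+Q'^2)^{-1/2}\,\theta \cdot \partial_{\theta^i}\theta$, which vanishes because $\partial/\partial\theta^i$ is tangent to $\mathbb S^{n-1}$ at $\theta$ and hence orthogonal to $\theta$. Next, against $\partial F/\partial x^i = (\partial/\partial x^i,\,Q'\,x^i/|\mathbf x|\,\theta)$ the inner product becomes
\[
(1+Q'^2)^{-1/2}\left(-Q'\,\frac{x^i}{|\mathbf x|} + Q'\,\frac{x^i}{|\mathbf x|}\,|\theta|^2\right) = 0,
\]
using $|\theta|=1$. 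So $\nu \perp T\Sigma$ at every point.

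Second, I would check the normalization: $|\nu|^2 = (1+Q'^2)^{-1}\bigl(Q'^2|\mathbf x/|\mathbf x||^2 + |\theta|^2\bigr) = (1+Q'^2)^{-1}(Q'^2+1) = 1$. Finally, for the ``in particular'' claim, substituting $\mathbf y = Q\,\theta$ gives
\[
\nu_\Sigma \cdot (\mathbf x,\mathbf y) = (1+Q'^2)^{-1/2}\bigl(-Q'\,|\mathbf x| + Q\,|\theta|^2\bigr) = \frac{Q - Q'|\mathbf x|}{\sqrt{1+Q'^2}},
\]
which is the desired identity. There is no real obstacle here — this is entirely a bookkeeping exercise — but if anything requires care it is tracking the direct-sum conventions from Proposition \ref{tangentVecs} and remembering the identity $\theta \cdot \partial_{\theta^i}\theta = 0$ that encodes tangency to the unit sphere.
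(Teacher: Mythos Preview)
Your proposal is correct and follows essentially the same approach as the paper's proof: verify unit length, check orthogonality against the tangent frame $\{\partial F/\partial x^i,\partial F/\partial\theta^i\}$ using $\theta\cdot\partial_{\theta^i}\theta=0$ and $|\theta|=1$, and then compute the dot product directly. The only cosmetic difference is that the paper dispatches the unit-length check in a single sentence before the orthogonality computations rather than after.
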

\begin{proof}
	Clearly, $\nu_\Sigma$ has norm one everywhere.
	It thus suffices to check that \\
	$\left( - Q'( | \mathbf x |) \frac{ \mathbf x}{ |\mathbf x |}, \frac{ \mathbf y}{ | \mathbf y |} \right)$ is orthogonal to the tangent vectors from proposition \ref{tangentVecs}.
	\begin{gather*}
		\left( - Q'( | \mathbf x |) \frac{ \mathbf x}{ |\mathbf x |}, \frac{ \mathbf y}{ | \mathbf y |} \right) 
		\cdot \frac{ \partial F}{ \partial \theta^i} 
		= 0 +  Q( | \mathbf x |) \left( \theta \cdot  \frac{ \partial }{ \partial \theta^i} \right)
		= 0
	\end{gather*}
	since $\theta \in \mathbb{S}^{n-1}$ is orthogonal to any tangent vector $\frac{ \partial}{ \partial \theta^i} \in T_\theta \mathbb{S}^{n-1}$.
	
	\begin{gather*}
		\left( - Q'( | \mathbf x |) \frac{ \mathbf x}{ |\mathbf x |}, \frac{ \mathbf y}{ | \mathbf y |} \right) 
		\cdot \frac{ \partial F}{ \partial x^i} 
		= - Q' \frac{ x^i}{| \mathbf x |} + \frac{ Q'}{| \mathbf y |} \frac{ x^i | \mathbf y |^2}{ | \mathbf x | | \mathbf y |}
		= 0
	\end{gather*}

	For the second statement, observe
	\begin{equation*} \begin{aligned}
		 \nu_\Sigma \cdot ( \mathbf x, \mathbf y) 
		= \frac{1}{ \sqrt{ 1 + Q'^2} } \left ( - Q' \frac{ \mathbf x }{ | \mathbf x |},  \frac{ \mathbf y }{ | \mathbf y |} \right) 
		\cdot ( \mathbf x, \mathbf y) 
		= \frac{1}{ \sqrt{ 1 + Q'^2} } \left[ - | \mathbf x | Q'   + Q  \right]
	\end{aligned} \end{equation*}
	using that $| \mathbf y | = Q( | \mathbf x |)$.
\end{proof}

\begin{cor}
	$$\mathbf F \cdot  \nu_\Sigma = \frac{ Q - | \mathbf x | Q' }{ \sqrt{ 1 + Q'^2} }$$
\end{cor}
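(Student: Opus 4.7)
The statement is an immediate restatement of the second equality in the preceding proposition \ref{unitNormal}, since the embedding $\mathbf F$ evaluated at $(\mathbf x, \theta)$ is exactly the point $(\mathbf x, \mathbf y) = (\mathbf x, Q(|\mathbf x|)\theta) \in \Sigma$. Thus the plan is simply to observe that $\mathbf F \cdot \nu_\Sigma$ and $(\mathbf x, \mathbf y) \cdot \nu_\Sigma$ denote the same quantity, and quote proposition \ref{unitNormal}.

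More explicitly, I would write: under the parametrization $\mathbf F(\mathbf x, \theta) = (\mathbf x, Q(|\mathbf x|)\theta)$, we have $\mathbf F = (\mathbf x, \mathbf y)$ with $\mathbf y = Q(|\mathbf x|)\theta$. Then by the second statement of proposition \ref{unitNormal},
\begin{equation*}
\mathbf F \cdot \nu_\Sigma = (\mathbf x, \mathbf y) \cdot \nu_\Sigma(\mathbf x, \mathbf y) = \frac{-Q'(|\mathbf x|)|\mathbf x| + Q(|\mathbf x|)}{\sqrt{1 + Q'(|\mathbf x|)^2}} = \frac{Q - |\mathbf x| Q'}{\sqrt{1 + Q'^2}},
\end{equation*}
which is the claimed identity.

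There is essentially no obstacle here; the corollary is a cosmetic rearrangement of the already-established formula, restating it in the notation $\mathbf F \cdot \nu_\Sigma$ that will be convenient for later use (for instance, in identifying $u_0 = (\mathbf x, \mathbf y) \cdot \ol{\nu}$ as a function of $|\mathbf x|$ in corollary \ref{posKernelElt}). The only thing to verify is that the two notations refer to the same object, which is immediate from the definition of $\mathbf F$.
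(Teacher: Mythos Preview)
Your proposal is correct and essentially matches the paper's approach: both recognize $\mathbf F(\mathbf x,\theta) = (\mathbf x,\mathbf y)$ and invoke proposition~\ref{unitNormal}. The paper's proof redoes the dot product computation using the explicit formula for $\nu_\Sigma$, whereas you more efficiently quote the second conclusion of proposition~\ref{unitNormal} directly; either way the content is identical.
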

\begin{proof}
	The proof is a straightforward computation using the previous proposition
	\begin{equation*} \begin{aligned}
		\mathbf F \cdot  \nu_\Sigma
		&= ( \mathbf x , Q( | \mathbf x |) \mathbf \theta ) \cdot \frac{  ( - Q' \frac{ \mathbf x }{ | \mathbf x |} , \mathbf \theta ) }{ \sqrt{ 1 + Q'^2} } \\	
		&= \frac{ - Q' \mathbf x \cdot \frac{ \mathbf x }{| \mathbf x |} + Q \theta \cdot \theta}{ \sqrt{ 1 + Q'^2} }	\\
		&= \frac{ Q - | \mathbf x | Q' }{ \sqrt{ 1 + Q'^2} }  	
	\end{aligned} \end{equation*}

\end{proof}

\begin{prop} \label{RiemMetric}
	The induced metric $g_{\Sigma}$ on the hypersurface $\Sigma$ has components
	\begin{equation*} \begin{aligned} 
		\frac{ \partial F}{\partial x^i } \cdot \frac{ \partial F}{\partial x^j } 
		&=  \delta_{ij} + Q' (| \mathbf x |)^2 \frac{ x^i x^j}{ | \mathbf x |^2} 		\\
		\frac{ \partial F}{ \partial \theta^i } \cdot \frac{ \partial F}{ \partial \theta^j} 
		&= Q( | \mathbf x |)^2 \frac{ \partial }{ \partial \theta^i} \cdot \frac{ \partial }{ \partial \theta^j}		\\
		 \frac{ \partial F}{ \partial x^i } \cdot \frac{ \partial F}{ \partial \theta^j} &= 0 
	\end{aligned} \end{equation*}
	\begin{equation*}
		g_{\Sigma}
		= \left( \begin{array}{cc}
			 \delta_{ij} + Q'( |\mathbf x|)^2 \frac{ x^i x^j}{ | \mathbf x |^2}  & 0 \\
			0 & Q( | \mathbf x |)^2 g_{\mathbb{S}} \\
		\end{array} \right)
	\end{equation*}
	In particular, at a point $\mathbf x = ( x^1, 0, ... , 0)$, $g_\Sigma$ has the block decomposition
	\begin{equation*}
	g_{\Sigma}
		= \left( \begin{array}{ccc}
			 1 + Q'(| \mathbf x |)^2 & 0 & 0 \\
			0 & Id_{n-1} & 0 \\
			0 & 0 & Q( | \mathbf x |)^2 g_{\mathbb{S}} \\
		\end{array} \right)
	\end{equation*}
	In polar coordinates for $\mathbf x = (r, \omega)$,
	\begin{equation*}
	g_{\Sigma}
		= \left( \begin{array}{ccc}
			 1 + Q'(r)^2 & 0 & 0 \\
			0 & r^2 g_{\mathbb{S}} & 0 \\
			0 & 0 & Q( r )^2 g_{\mathbb{S}} \\
		\end{array} \right)
	\end{equation*}
\end{prop}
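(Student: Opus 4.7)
The plan is to prove Proposition \ref{RiemMetric} by a direct component-wise calculation, reading off the metric from the tangent vectors already identified in Proposition \ref{tangentVecs}, and then simplifying at the distinguished point $\mathbf{x}=(x^1,0,\dots,0)$ and in polar coordinates.

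First I would compute the three types of inner products. For the $\mathbf{x}$-$\mathbf{x}$ block, using $\partial F/\partial x^i = \partial/\partial x^i \oplus Q'(|\mathbf{x}|)(x^i/|\mathbf{x}|)\theta$, I get
\[
\frac{\partial F}{\partial x^i}\cdot\frac{\partial F}{\partial x^j}
= \delta_{ij} + Q'(|\mathbf{x}|)^2\frac{x^i x^j}{|\mathbf{x}|^2}\,|\theta|^2
= \delta_{ij} + Q'(|\mathbf{x}|)^2\frac{x^i x^j}{|\mathbf{x}|^2},
\]
using $|\theta|^2=1$. For the $\theta$-$\theta$ block, since $\partial F/\partial\theta^i = \mathbf{0}\oplus Q(|\mathbf{x}|)\,\partial/\partial\theta^i$, the claim is immediate. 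For the mixed block, I would use that the $\mathbf{x}$-component of $\partial F/\partial \theta^i$ vanishes and that $\theta\perp T_\theta \mathbb{S}^{n-1}$, so
\[
\frac{\partial F}{\partial x^i}\cdot\frac{\partial F}{\partial \theta^j}
= 0 + Q'(|\mathbf{x}|)\,Q(|\mathbf{x}|)\,\frac{x^i}{|\mathbf{x}|}\,\theta\cdot\frac{\partial}{\partial\theta^j}=0.
\]
This gives the global block form of $g_\Sigma$ in the statement.

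Next, at the point $\mathbf{x}=(x^1,0,\dots,0)$, the $n\times n$ matrix $\delta_{ij}+Q'(|\mathbf{x}|)^2 x^i x^j/|\mathbf{x}|^2$ reduces to a diagonal matrix with entries $1+Q'(|\mathbf{x}|)^2, 1, 1,\dots, 1$ since $x^i x^j/|\mathbf{x}|^2 = \delta_{i1}\delta_{j1}$ there. Combining with the angular block $Q(|\mathbf{x}|)^2 g_{\mathbb{S}^{n-1}}$ yields the three-block decomposition stated. Finally, switching to polar coordinates $\mathbf{x}=(r,\omega)$ on $\mathbb{R}^n$, the Euclidean metric becomes $dr^2 + r^2 g_{\mathbb{S}^{n-1}}$, and since $Q=Q(r)$ depends only on $r$, the $Q'^2(x^i x^j/|\mathbf{x}|^2)$ correction contributes only in the $\partial_r$ direction. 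This gives the last displayed matrix.

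There is no genuine obstacle here: everything is bookkeeping with the chain rule and the orthogonality relations $\theta\cdot\partial/\partial\theta^i=0$, $|\theta|=1$, together with the fact that $Q$ depends only on $r=|\mathbf{x}|$. The only small care needed is to keep track of the two-factor structure $\mathbb{R}^n\times\mathbb{R}^n$ when taking inner products — that is, that components coming from the $\mathbf{x}$-factor of $F$ pair only with components coming from the $\mathbf{x}$-factor of another tangent vector, and similarly for the $\mathbf{y}$-factor — which is exactly what makes the mixed block vanish.
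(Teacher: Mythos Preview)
Your proposal is correct and follows essentially the same approach as the paper: a direct component-wise computation using the tangent vectors from Proposition \ref{tangentVecs} together with the orthogonality $\theta \cdot \partial/\partial \theta^i = 0$, followed by the specialization at $\mathbf{x} = (x^1, 0, \dots, 0)$. The only cosmetic difference is that the paper invokes $O(n)\times O(n)$-invariance for the polar-coordinate form, whereas you argue directly from $g_{\mathbb{R}^n} = dr^2 + r^2 g_{\mathbb{S}^{n-1}}$; both are fine.
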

\begin{proof}
	We use proposition \ref{tangentVecs} throughout.
	\begin{gather*}
		\frac{ \partial F}{ \partial \theta^i} \cdot \frac{ \partial F}{ \partial \theta^j} 
		= Q^2 \frac{ \partial }{ \partial \theta^i } \cdot \frac{ \partial }{ \partial \theta^j }
	\end{gather*}
	
	\begin{equation*} \begin{aligned}
		\frac{ \partial F}{ \partial x^i} \cdot \frac{ \partial F}{ \partial x^j} 
		&= \left( \frac{ \partial}{ \partial x^i } \oplus Q' \frac{ x^i }{ | \mathbf x |} \theta \right) 
		\cdot \left( \frac{ \partial}{ \partial x^j } \oplus Q' \frac{ x^j }{ | \mathbf x |} \theta \right) \\
		&=  \frac{ \partial }{ \partial x^i} \cdot \frac{ \partial }{ \partial x^j} + Q'^2 \frac{ x^i x^j}{ | \mathbf x |^2} | \theta|^2 \\
		&=  \delta_{ij} + Q'^2 \frac{ x^i x^j}{ | \mathbf x |^2} 
	\end{aligned} \end{equation*}
	
	Finally,
	\begin{gather*}
		\frac{ \partial F}{ \partial x^i } \cdot \frac{ \partial F}{ \partial \theta^j}
		= \left( \frac{ \partial}{ \partial x^i } \oplus Q' \frac{ x^i }{ | \mathbf x |} \theta \right) 
		\cdot \left( \mathbf 0 \oplus Q \frac{ \partial}{ \partial \theta^i} \right)
		= Q' Q \frac{ x^i}{ | \mathbf x | } \theta \cdot \frac{ \partial }{ \partial \theta^i} 
		= 0 		
	\end{gather*}
	since tangent vectors $\frac{ \partial }{ \partial \theta^i}$ to the sphere $\mathbb{S}^{n-1}$ are orthogonal to the position $\theta$ on the sphere. 
	
	The first block decomposition of $g_{\Sigma}$ is immediate.
	For the second, note that at $\mathbf x = ( x^1, 0, ... 0)$, $| \mathbf x |^2 = ( x^1)^2$.
	Hence, 
		$$ \delta_{ij} + Q'^2 \frac{ x^i x^j}{| \mathbf x |^2}  = 
		\left\{ \begin{array}{cc}
			 1 + Q'^2, & i = j = 1 \\
			\delta_{ij}, & \text{ else} \\
		\end{array} \right.$$
	The resulting block decomposition follows.
	
	The final block decomposition for $g_{\Sigma}$ follows from the prior and $O(n) \times O(n)$-invariance.
\end{proof}

\begin{cor}
	At a point $\mathbf x = ( x^1, 0, ... 0)$, the inverse matrix of $g_\Sigma$ is
		$$ g_\Sigma^{-1}
		= \left( \begin{array}{ccc} 
			\frac{1}{  1 + Q'(| \mathbf x |)^2 } & 0 & 0 \\
			0 & Id_{n-1} & 0 \\
			0 & 0 & \frac{1}{Q( | \mathbf x |)^2 } g_{\mathbb{S}}^{-1} \\
		\end{array} \right)$$
\end{cor}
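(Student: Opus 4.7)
The plan is to observe that the corollary follows immediately from the block-diagonal form of $g_\Sigma$ established in the final display of Proposition \ref{RiemMetric}. Indeed, at the point $\mathbf{x}=(x^1,0,\ldots,0)$, the matrix $g_\Sigma$ decomposes as a direct sum of three blocks: the $1\times 1$ block $1+Q'(|\mathbf{x}|)^2$ coming from the radial direction $\partial/\partial x^1$, the $(n-1)\times(n-1)$ identity block coming from the spherical directions in the $\mathbf{x}$-factor, and the $(n-1)\times(n-1)$ block $Q(|\mathbf{x}|)^2 g_{\mathbb{S}}$ coming from the $\mathbb{S}^{n-1}_\theta$ factor.

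The first step will be to recall the elementary linear algebra fact that a block-diagonal matrix $\mathrm{diag}(B_1,B_2,B_3)$ is invertible if and only if each $B_i$ is invertible, in which case its inverse is $\mathrm{diag}(B_1^{-1},B_2^{-1},B_3^{-1})$. The second step will be to invert each block individually: the $1\times 1$ block inverts to $1/(1+Q'(|\mathbf{x}|)^2)$, the identity block inverts to itself, and the third block inverts to $Q(|\mathbf{x}|)^{-2} g_{\mathbb{S}}^{-1}$ by scalar linearity. Assembling these three inverses into a single block-diagonal matrix produces exactly the matrix claimed in the statement of the corollary.

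There is no real obstacle here; the only minor subtlety is the implicit assumption $Q(|\mathbf{x}|)>0$, which is part of the standing smoothness hypothesis on the profile function in this subsection, so that $Q^2 g_{\mathbb{S}}$ is genuinely invertible. Since the metric components at a general point $\mathbf{x}$ are obtained from this special case by the $O(n)\times O(n)$ action (which acts by orthogonal transformations in each factor and therefore commutes with matrix inversion), this pointwise formula is really all that is needed for the computations in the appendix.
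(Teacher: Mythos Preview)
Your proposal is correct and matches the paper's approach: the paper states this corollary without proof, since it follows immediately from the block-diagonal form of $g_\Sigma$ in Proposition~\ref{RiemMetric} by inverting each block separately, exactly as you describe.
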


\begin{prop} \label{2ndFundlForm}
	The second fundamental form is
	\begin{equation*}	\begin{aligned}
		A_{ij} &= - \sigma_j \cdot \nabla_{\sigma_i} \nu_\Sigma = \nabla_{\sigma_i} \sigma_j \cdot \nu_{\Sigma}	\\
		&= \frac{1}{ \sqrt{ 1 + Q'( | \mathbf x | )^2} }   
		\left( \begin{array}{cc}
			Q''( |\mathbf x |) \frac{ x^i  x^j}{| \mathbf x |^2} + Q'( |\mathbf x |) \frac{ \delta_{ij} | \mathbf x |^2 - x^i x^j }{ | \mathbf x |^3} & 0 \\
			0 & - Q( | \mathbf x | ) \,  (g_{\mathbb {S}^{n-1} })_{ij}	\\
		\end{array} \right)
	\end{aligned} \end{equation*}
	
	In particular, at a point where $\mathbf x = ( x^1, 0, ... , 0)$, the second fundamental form becomes
		$$A_{ij} 
		= \frac{1}{ \sqrt{ 1 + Q'( | \mathbf x | )^2} }   
		\left( \begin{array}{ccc}
			Q''( |\mathbf x |) & 0 & 0 \\
			0 & \frac{ Q'( | \mathbf x |) }{ | \mathbf x |} \delta_{ij} & 0 \\
			0 & 0 & - Q( | \mathbf x | ) \,  (g_{\mathbb {S}^{n-1} })_{ij}	\\
		\end{array} \right)$$
\end{prop}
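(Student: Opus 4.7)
The plan is a direct coordinate computation using the formula $A_{ij} = \partial_i \partial_j F \cdot \nu_\Sigma$, built on the tangent vectors of Proposition \ref{tangentVecs} and the unit normal of Proposition \ref{unitNormal}. I would organize the computation by the three blocks of mixed second derivatives of the parametrization $F(\mathbf x, \theta) = (\mathbf x, Q(|\mathbf x|)\theta)$.

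First I would compute $\partial_{x^j} \partial_{x^i} F$. The $\mathbf x$-component is $0$, and the $\mathbf y$-component is obtained by differentiating $Q'(|\mathbf x|) \tfrac{x^i}{|\mathbf x|}\theta$ in $x^j$, using $\partial_{x^j} |\mathbf x| = x^j/|\mathbf x|$ and $\partial_{x^j}(x^i/|\mathbf x|) = (\delta_{ij}|\mathbf x|^2 - x^i x^j)/|\mathbf x|^3$, yielding
\begin{equation*}
\partial_{x^j}\partial_{x^i} F = \mathbf 0 \oplus \left( Q''\tfrac{x^i x^j}{|\mathbf x|^2} + Q' \tfrac{\delta_{ij}|\mathbf x|^2 - x^i x^j}{|\mathbf x|^3}\right)\theta.
\end{equation*}
Pairing with $\nu_\Sigma = (1+Q'^2)^{-1/2}(-Q' \tfrac{\mathbf x}{|\mathbf x|}, \theta)$ and using $|\theta|^2 = 1$ gives the $(x^i, x^j)$-block of the formula.

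Next I would compute $\partial_{\theta^j}\partial_{\theta^i} F = \mathbf 0 \oplus Q \, \partial_{\theta^j}\partial_{\theta^i}\theta$. The key identity is that on $\mathbb S^{n-1} \subset \mathbb R^n$, differentiating $|\theta|^2 \equiv 1$ twice yields $\theta \cdot \partial_{\theta^j}\partial_{\theta^i}\theta = -(g_{\mathbb S^{n-1}})_{ij}$. Pairing with $\nu_\Sigma$ then produces the $(\theta^i,\theta^j)$-block with its minus sign. For the mixed block I would compute $\partial_{\theta^j}\partial_{x^i} F = \mathbf 0 \oplus Q' \tfrac{x^i}{|\mathbf x|}\partial_{\theta^j}$, and note that its pairing with $\nu_\Sigma$ involves $\theta \cdot \partial_{\theta^j} = 0$, so the off-diagonal block vanishes. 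Combining the three cases yields the claimed matrix representation of $A_\Sigma$ in the general $(\mathbf x,\theta)$ coordinates.

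Finally, I would specialize to a point $\mathbf x = (x^1, 0, \dots, 0)$ where $|\mathbf x|^2 = (x^1)^2$ and $x^i x^j = 0$ unless $i=j=1$. The first $n\times n$ block then simplifies to $\mathrm{diag}\bigl(Q''(|\mathbf x|), \tfrac{Q'(|\mathbf x|)}{|\mathbf x|}, \dots, \tfrac{Q'(|\mathbf x|)}{|\mathbf x|}\bigr)$, giving the stated block-diagonal form. The only mild subtlety is keeping track of the $\delta_{ij}|\mathbf x|^2 - x^i x^j$ tangential piece coming from differentiating $x^i/|\mathbf x|$ and checking that it collapses correctly at the specialized point; otherwise the argument is a straightforward application of the definition of $A$ together with the orthogonality $\theta \perp T_\theta \mathbb S^{n-1}$ that was already used in Propositions \ref{tangentVecs} and \ref{unitNormal}.
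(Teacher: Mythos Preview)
Your proof is correct and uses the other half of the defining identity $A_{ij} = -\sigma_j \cdot \nabla_{\sigma_i}\nu_\Sigma = (\partial_i\partial_j F)\cdot \nu_\Sigma$: you differentiate the embedding $F$ twice and pair with the normal, whereas the paper differentiates $\nu_\Sigma$ once and pairs with the tangent vectors $\sigma_j$. Your route is arguably a bit cleaner here, since you never touch the normalizing factor $(1+Q'^2)^{-1/2}$ (the paper has to observe that the term coming from differentiating this factor is proportional to $\nu_\Sigma$ and therefore drops out upon pairing with $\sigma_j$), and the $(\theta^i,\theta^j)$-block falls out immediately from the sphere identity $\theta\cdot\partial_{\theta^i}\partial_{\theta^j}\theta = -(g_{\mathbb S^{n-1}})_{ij}$. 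The paper's approach, on the other hand, stays closer to the Weingarten-map viewpoint and makes the $\nabla\nu_\Sigma$ vector available for later use. Both are equally elementary and yield the same answer; the remaining specialization to $\mathbf x = (x^1,0,\dots,0)$ is identical.
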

\begin{proof}
	For $\sigma_i = \frac{ \partial F}{ \partial x^i}$, it follows that
	\begin{equation*} \begin{aligned}
		\nabla_{\sigma_i} \nu_\Sigma
		&= \frac{ \partial }{ \partial x^i} \nu_\Sigma ( F( \mathbf x, \theta) ) \\
		&= \frac{ \partial }{ \partial x^i} \left( \frac{ 1}{ \sqrt{ 1 + Q'^2} } \right) 
		\left( - Q'( | \mathbf x |) \frac{ \mathbf x}{ |\mathbf x |}, \theta  \right) 
		+ \frac{ 1}{ \sqrt{ 1 + Q'^2} } 
		\left( - \frac{ \partial }{ \partial x^i} \left( Q' \frac{ \mathbf x }{ | \mathbf x |} \right), \frac{ \partial }{ \partial x^i} \theta \right) \\
		&= \frac{ \partial }{ \partial x^i} \left( \frac{ 1}{ \sqrt{ 1 + Q'^2} } \right) 
		\left( - Q'( | \mathbf x |) \frac{ \mathbf x}{ |\mathbf x |}, \theta  \right) 
		+ \frac{ 1}{ \sqrt{ 1 + Q'^2} } 
		\left( - \frac{ \partial }{ \partial x^i} \left( Q' \frac{ \mathbf x }{ | \mathbf x |} \right), \mathbf 0 \right)
	\end{aligned} \end{equation*}
	The first term is a multiple of $\nu_\Sigma$ so it suffices to compute the second term.
	The second term equals
	\begin{equation*} 
		\frac{ 1}{ \sqrt{ 1 + Q'^2} } 
		\left( - \frac{ \partial }{ \partial x^i} \left( Q' \frac{ \mathbf x }{ | \mathbf x |} \right), \mathbf 0 \right)	\\
		= \frac{ -1}{ \sqrt{ 1+ Q'^2} }
		\left( Q'' \frac{ x^i}{ | \mathbf x |} \frac{ \mathbf x }{ | \mathbf x |} 
		+ \frac{ Q'}{ | \mathbf x |} \frac{ \partial }{ \partial x^i} 
		- Q' \frac{ x^i}{ | \mathbf x |^3} \mathbf x , 
		\mathbf 0 \right)
	\end{equation*}

	It now follows that
	$$\sigma_j \cdot \nabla_{ \sigma_i} \nu_\Sigma 
	= \frac{ - 1}{ \sqrt{ 1 + Q'^2} }  \, \sigma_j \cdot  \left( Q'' \frac{ x^i}{ | \mathbf x |} \frac{ \mathbf x }{ | \mathbf x |} 
		+ \frac{ Q'}{ | \mathbf x |} \frac{ \partial }{ \partial x^i} 
		- Q' \frac{ x^i}{ | \mathbf x |^3} \mathbf x , 
		\mathbf 0 \right)$$
		
	If $\sigma_j = \frac{ \partial F}{ \partial \theta^j}$ then
	\begin{equation*} \begin{aligned}
		\frac{ \partial F}{ \partial \theta^j} \cdot \nabla_{\sigma_i} \nu_\Sigma 		
		&= \frac{ - 1}{ \sqrt{ 1 + Q'^2} } \left( \mathbf 0, Q \frac{ \partial}{ \partial \theta^j}  \right) \cdot \left( Q'' \frac{ x^i}{ | \mathbf x |} \frac{ \mathbf x }{ | \mathbf x |} 
		+ \frac{ Q'}{ | \mathbf x |} \frac{ \partial }{ \partial x^i} 
		- Q' \frac{ x^i}{ | \mathbf x |^3} \mathbf x , \mathbf 0 \right) 	\\
		&= 0
	\end{aligned} \end{equation*}
		
	If $\sigma_j = \frac{ \partial F}{ \partial x^j}$ then
	\begin{equation*} \begin{aligned}	
		\frac{ \partial F}{ \partial x^j} \cdot \nabla_{\sigma_i} \nu_\Sigma
		&= \frac{ -1}{ \sqrt{ 1 + Q'^2} } \frac{ \partial }{ \partial x^j } \cdot 
		\left( Q'' \frac{ x^i}{ | \mathbf x |} \frac{ \mathbf x }{ | \mathbf x |} 
		+ \frac{ Q'}{ | \mathbf x |} \frac{ \partial }{ \partial x^i} 
		- Q' \frac{ x^i}{ | \mathbf x |^3} \mathbf x \right)			\\
		&= \frac{ -1}{ \sqrt{ 1 + Q'^2} } 
		\left[ Q'' \frac{ x^i x^j }{ | \mathbf x |^2} 
		+ \frac{ Q'}{ | \mathbf x |} \delta_{ij}
		- Q' \frac{ x^i x^j}{ | \mathbf x |^3} \right]
	\end{aligned} \end{equation*}
		
	Now let $\sigma_i = \frac{ \partial F}{ \partial \theta^i}$.
	It follows that
	\begin{equation*} \begin{aligned}
		\nabla_{\sigma_i} \nu_\Sigma 
		&= \frac{ \partial }{ \partial \theta^i} \nu_\Sigma ( F( \mathbf x, \theta) ) \\
		&= \frac{ 1}{ \sqrt{ 1 + Q'^2} } \left( \frac{ \partial }{ \partial \theta^i} \left( - Q' \frac{ \mathbf x }{ | \mathbf x |} \right) , 
		\frac{ \partial \theta }{ \partial \theta^i} \right) \\
		&= \frac{ 1}{ \sqrt{ 1 + Q'^2} } \left( \mathbf 0, \frac{ \partial \theta }{ \partial \theta^i } \right)
	\end{aligned} \end{equation*}
	
	If $\sigma_j = \frac{ \partial F}{ \partial x^j}$ then
	\begin{gather*}
		\sigma_j \cdot \nabla_{\sigma_i} \nu_\Sigma
		= \left( \frac{ \partial }{ \partial x^j} , Q' \frac{ x^j}{ | \mathbf x |} \theta \right)
		\cdot \frac{ 1}{ \sqrt{ 1 + Q'^2} } \left( \mathbf 0, \frac{ \partial \theta }{ \partial \theta^i } \right)
		= 0
	\end{gather*}
	as expected from symmetry of the second fundamental form.
	
	If $\sigma_j = \frac{ \partial F}{ \partial \theta^j}$ then
	\begin{equation*} \begin{aligned}
		\sigma_j \cdot \nabla_{\sigma_i} \nu_\Sigma
		&= \left( \mathbf 0, Q \frac{ \partial \theta}{ \partial \theta^j} \right) 
		\cdot  \frac{ 1}{ \sqrt{ 1 + Q'^2} } \left( \mathbf 0, \frac{ \partial \theta }{ \partial \theta^i } \right) \\
		&= \frac{ Q}{ \sqrt{ 1 + Q'^2} } \frac{ \partial \theta}{ \partial \theta^j} \cdot  \frac{ \partial \theta}{ \partial \theta^i}\\
		&= \frac{ Q}{ \sqrt{1 + Q'^2} } ( g_{\mathbb{S}} )_{ij} 
	\end{aligned} \end{equation*}
\end{proof}

\begin{prop} \label{meanCurv}
	The (scalar) mean curvature $H$ is
		$$H = \frac{1}{ \sqrt{1 + Q'( |\mathbf x |)^2} } \left( \frac{ Q''( |\mathbf x |)}{ 1 + Q'( |\mathbf x |)^2} + \frac{ (n-1) Q'( |\mathbf x |)}{| \mathbf x |} - \frac{ n-1}{ Q( |\mathbf x |)} \right)$$
\end{prop}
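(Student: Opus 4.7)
The plan is to deduce the formula for $H$ directly from the formulas for $g_\Sigma$ and $A_\Sigma$ established in propositions \ref{RiemMetric} and \ref{2ndFundlForm}, via the identity $H = g^{ij} A_{ij}$. Since both the metric and the second fundamental form are tensorial and the hypersurface is $O(n) \times O(n)$-invariant, the value of $H$ at a generic point $(\mathbf{x}, Q(|\mathbf{x}|)\theta)$ equals the value at any rotated image; in particular, one can compute $H$ at the representative point where $\mathbf{x} = (x^1, 0, \ldots, 0)$ and the expressions reduce to their simplest diagonal form.

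First I would write out the block form of $g_\Sigma^{-1}$ and $A_\Sigma$ at such a point, as already recorded in the corollary after proposition \ref{RiemMetric} and in proposition \ref{2ndFundlForm}. Then $H = g^{ij} A_{ij}$ splits into three contributions corresponding to the three diagonal blocks: the $1 \times 1$ block in the radial $\mathbf{x}$-direction, the $(n-1) \times (n-1)$ block in the angular $\mathbf{x}$-directions, and the $(n-1) \times (n-1)$ block on $\mathbb{S}^{n-1}_\theta$. The radial block contributes
\[
\frac{1}{1 + Q'^2} \cdot \frac{Q''}{\sqrt{1 + Q'^2}} = \frac{Q''}{(1+Q'^2)^{3/2}},
\]
the angular $\mathbf{x}$-block contributes $(n-1) \cdot \frac{1}{\sqrt{1+Q'^2}} \cdot \frac{Q'}{|\mathbf{x}|}$ after tracing against $Id_{n-1}$, and the $\theta$-block contributes $\frac{1}{Q^2} \cdot \frac{-Q}{\sqrt{1+Q'^2}} \cdot \operatorname{tr}_{g_{\mathbb{S}^{n-1}}}(g_{\mathbb{S}^{n-1}}) = -\frac{n-1}{Q\sqrt{1+Q'^2}}$. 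Summing and factoring out $1/\sqrt{1+Q'^2}$ yields exactly the claimed formula.

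There is no serious obstacle here: the statement is a bookkeeping consequence of the earlier explicit computations. The only mild care needed is in the angular $\mathbf{x}$-block, where one must recognize that the Cartesian presentation of $g_\Sigma$ at $\mathbf{x} = (x^1, 0, \ldots, 0)$ gives the identity on these directions while $A_\Sigma$ carries the factor $Q'/|\mathbf{x}|$, and that this factor is precisely what one would obtain from polar coordinates $(r,\omega)$ where $g_\Sigma$ has the $r^2 g_{\mathbb{S}^{n-1}}$ block and $A_\Sigma$ has the $Q'\, r\, g_{\mathbb{S}^{n-1}}$ block; the two descriptions yield the same trace. Once all three contributions are added, the proposition follows.
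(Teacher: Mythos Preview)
Your proposal is correct and follows essentially the same approach as the paper: compute $H = \operatorname{tr}(g_\Sigma^{-1} A)$ at a point with $\mathbf{x} = (x^1, 0, \ldots, 0)$ using the block-diagonal forms already established, then invoke $O(n)\times O(n)$-invariance. The paper writes out the product matrix $g_\Sigma^{-1} A$ and takes its trace, while you trace block by block, but these are the same computation.
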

\begin{proof}
	We compute the mean curvature at a point of the form $ \mathbf x = ( x^1, 0, ... , 0)$ and then use the $O(n) \times O(n)$ symmetry of $\Sigma$.
	\begin{equation*} \begin{aligned}
		&g_{\Sigma}^{-1} A \\
		=& \frac{ 1}{ \sqrt{ 1 + Q'^2} }
		\left( \begin{array}{ccc} 
			\frac{1}{  1 + Q'(| \mathbf x |)^2 } & 0 & 0 \\
			0 & Id_{n-1} & 0 \\
			0 & 0 & \frac{1}{Q( | \mathbf x |)^2 } g_{\mathbb{S}}^{-1} \\
		\end{array} \right)	\\
		& \cdot \left( \begin{array}{ccc}
			Q''( |\mathbf x |) & 0 & 0 \\
			0 & \frac{ Q'( | \mathbf x |) }{ | \mathbf x |} Id_{n-1} & 0 \\
			0 & 0 & - Q( | \mathbf x | ) \,  g_{\mathbb {S}^{n-1} }	\\
		\end{array} \right) \\
		=& \frac{ 1 }{ \sqrt{ 1 + Q'^2} }
		\left( \begin{array}{ccc}
			\frac{ Q''}{ 1 + Q'^2} & 0 & 0 \\
			0 & \frac{ Q'}{ | \mathbf x | } Id_{n-1} & 0 \\
			0 & 0 & \frac{ -1}{ Q} Id_{n-1} \\
		\end{array} \right) 
		\\
	\end{aligned} \end{equation*}	
	\begin{equation*}
		\implies
		H = tr( g_{\Sigma}^{-1} A ) 
		= \frac{ 1}{ \sqrt{ 1 + Q'^2} } \left( \frac{ Q''}{ 1 + Q'^2} + \frac{ (n-1) Q'}{| \mathbf x |} - \frac{ n-1}{ Q} \right) 
	\end{equation*}
	Because $\Sigma$ is $O(n) \times O(n)$ invariant, this formula holds throughout $\Sigma$.
\end{proof}

\begin{prop} \label{MCFforQ}
	Mean curvature flow of $\Sigma$ corresponds to the following partial differential equation for $Q$
		$$\partial_t Q = \frac{ Q''}{ 1 + Q'^2} + \frac{ (n-1)}{| \mathbf x |} Q' - \frac{ n-1}{Q}$$
\end{prop}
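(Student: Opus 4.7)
The plan is to exploit the fact that mean curvature flow is invariant under tangential reparametrizations, so only the normal component of the velocity $\partial_t \mathbf{F}$ must equal the mean curvature vector $\mathbf{H}_{\Sigma(t)} = H \nu_\Sigma$. For the specific parametrization $\mathbf{F}(\mathbf{x}, \theta, t) = (\mathbf{x}, Q(|\mathbf{x}|, t) \theta)$ used here, only the $\mathbf{y}$-coordinate depends on $t$, so the velocity is purely vertical, $\partial_t \mathbf{F} = (\mathbf{0}, \partial_t Q \cdot \theta)$, which will have both normal and tangential components. Matching normal components on each side of the MCF equation yields the scalar equation for $Q$.

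First, I would compute $\partial_t \mathbf{F} \cdot \nu_\Sigma$ using the formula for $\nu_\Sigma$ from Proposition \ref{unitNormal}. Since $\theta$ is a unit vector, this gives
\[
    \partial_t \mathbf{F} \cdot \nu_\Sigma = \frac{\partial_t Q \cdot (\theta \cdot \theta)}{\sqrt{1 + Q'^2}} = \frac{\partial_t Q}{\sqrt{1 + Q'^2}}.
\]
Next, the MCF equation $\partial_t \mathbf{F} = H \nu_\Sigma$ (modulo tangential velocities, which do not alter $\Sigma(t)$ as a subset of $\mathbb{R}^{2n}$) implies
\[
    \frac{\partial_t Q}{\sqrt{1 + Q'^2}} = H.
\]
Substituting the expression for $H$ from Proposition \ref{meanCurv} and multiplying both sides by $\sqrt{1 + Q'^2}$ gives exactly the claimed equation. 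Conversely, if $Q$ satisfies this PDE, then the normal component of $\partial_t \mathbf{F}$ equals $H$ pointwise, which means $\Sigma(t)$ moves by mean curvature flow in the geometric sense (the tangential component of $\partial_t \mathbf{F}$ amounts only to a reparametrization).

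There is no real obstacle: the computation is a direct pairing of the already-computed expressions for $\nu_\Sigma$ and $H$ against the explicit velocity of the chosen parametrization. The only point worth emphasizing is that $\partial_t \mathbf{F}$ is not purely normal under this parametrization, so one must take the inner product with $\nu_\Sigma$ rather than try to equate $\partial_t \mathbf{F}$ with $H \nu_\Sigma$ directly. One can verify the result on a test case, for instance the shrinking sphere $Q(r,t) = \sqrt{2(2n-1)(T-t) - r^2}$, where both sides reduce to $-(2n-1)/Q$.
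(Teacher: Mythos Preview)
Your proof is correct and follows essentially the same approach as the paper: compute the normal component $\nu_\Sigma \cdot \partial_t F = \partial_t Q / \sqrt{1+Q'^2}$, set it equal to $H$, and cancel the common factor of $(1+Q'^2)^{-1/2}$. Your additional remarks about tangential reparametrization and the shrinking-sphere check are helpful context but not needed for the argument.
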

\begin{proof}
	The mean curvature flow equation can be written as
		$$\nu_{\Sigma} \cdot \partial_t F ( \mathbf x, \theta, t)  = H$$
	For
		$$F( \mathbf x, \theta, t) = ( \mathbf x , Q( | \mathbf x |, t) \theta )$$
	the left-hand side becomes
	\begin{gather*}
		\nu_{\Sigma} \cdot \partial_t F ( \mathbf x, \theta, t) 
		= \frac{ 1}{ \sqrt{ 1 + Q'^2} } \left( - Q' \frac{ \mathbf x }{ | \mathbf x | } , \theta \right) 
		\cdot 
		( \mathbf 0 , ( \partial_t Q) \theta) 
		= \frac{ ( \partial_t Q) \theta \cdot \theta} { \sqrt{ 1+ Q'^2} }
		= \frac{ \partial_t Q}{ \sqrt {1 + Q'^2}}
	\end{gather*}
	Cancelling the $(  1 + Q'^2)^{-1/2} $ from both sides of the partial differential equation yields the desired equation.
\end{proof}

\begin{remark}
	The above partial differential equation differs from that in \cite{V94} by a factor of $2$ on the right-hand side.
	This factor is due to the fact that \cite{V94} parametrizes the hypersurfaces in terms of the sphere of radius $\frac{1}{\sqrt{2}}$ instead of the unit sphere $\mathbb{S}^{n-1}$,
	and so the profile functions here correspond to
		$$\frac{ 1}{ \sqrt{2} } Q \left( \sqrt{ 2} | \mathbf x | , t \right)$$
	in the notation of \cite{V94} 
\end{remark}

\begin{prop} \label{2ndFundlFormNorm}
	$$| A|^2 = A_j^i A_i^j 
	= \frac{1}{  1 + Q'^2} \left[ \left( \frac{ Q''}{1 + Q'^2} \right)^2 + (n-1) \frac{ Q'^2}{| \mathbf x |^2} + \frac{ n-1}{Q^2} \right]$$
\end{prop}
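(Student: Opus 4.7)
The plan is to read off $|A|^2$ directly from the matrix expression for $g_\Sigma^{-1} A$ already computed in the proof of Proposition \ref{2ndFundlForm}, then extend by symmetry. Since $|A|^2$ is the squared Hilbert--Schmidt norm of the shape operator, I will use the identity
$$|A|^2 = g^{ij} g^{kl} A_{ik} A_{jl} = A^i{}_j A^j{}_i = \operatorname{tr}\!\left( (g_\Sigma^{-1} A)^2 \right).$$

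First, I would work at a point of the form $\mathbf{x} = (x^1, 0, \ldots, 0)$, where the proof of Proposition \ref{2ndFundlForm} produced the block-diagonal expression
$$g_\Sigma^{-1} A = \frac{1}{\sqrt{1+Q'^2}} \left( \begin{array}{ccc} \frac{Q''}{1+Q'^2} & 0 & 0 \\ 0 & \frac{Q'}{|\mathbf{x}|} \, \mathrm{Id}_{n-1} & 0 \\ 0 & 0 & -\frac{1}{Q} \, \mathrm{Id}_{n-1} \end{array} \right).$$
Since this matrix is diagonal, squaring it and taking the trace is immediate: the $(1,1)$ entry contributes $\left( \frac{Q''}{1+Q'^2} \right)^2$, each of the $n-1$ copies of the second block contributes $\left( \frac{Q'}{|\mathbf{x}|} \right)^2$, and each of the $n-1$ copies of the third block contributes $\frac{1}{Q^2}$. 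Pulling out the overall factor of $\frac{1}{1+Q'^2}$ from the two $\sqrt{1+Q'^2}$ factors then yields exactly
$$|A|^2 = \frac{1}{1+Q'^2}\left[ \left( \frac{Q''}{1+Q'^2} \right)^2 + (n-1)\frac{Q'^2}{|\mathbf{x}|^2} + \frac{n-1}{Q^2} \right]$$
at the chosen point.

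Finally, because both sides of the identity are functions of $|\mathbf{x}|$ alone and the hypersurface $\Sigma$ is $O(n) \times O(n)$-invariant, the formula extends from the slice $\{(x^1, 0, \ldots, 0)\}$ to all of $\Sigma$, completing the proof. There is no real obstacle here: the whole argument is a bookkeeping exercise on a diagonal matrix, and the only thing to verify carefully is that the scalar factor $(1+Q'^2)^{-1/2}$ is squared correctly so that the final prefactor is $(1+Q'^2)^{-1}$ rather than $(1+Q'^2)^{-1/2}$.
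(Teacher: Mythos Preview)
Your proposal is correct and follows essentially the same approach as the paper: compute $|A|^2 = \operatorname{tr}\big((g_\Sigma^{-1}A)^2\big)$ at a point with $\mathbf{x}=(x^1,0,\ldots,0)$ using the diagonal form of the shape operator, then extend by $O(n)\times O(n)$-invariance. One small correction: the block-diagonal expression for $g_\Sigma^{-1}A$ appears in the proof of Proposition~\ref{meanCurv}, not Proposition~\ref{2ndFundlForm} (which only records $A_{ij}$ itself).
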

\begin{proof}
	In proposition \ref{meanCurv}
	we computed that 
	\begin{equation*}
		g_\Sigma^{-1} A =
		g^{ik}A_{kj}
		= A^i_j 
		= 
		\frac{ 1 }{ \sqrt{ 1 + Q'^2} }
		\left( \begin{array}{ccc}
			\frac{ Q''}{ 1 + Q'^2} & 0 & 0 \\
			0 & \frac{ Q'}{ | \mathbf x | } Id_{n-1} & 0 \\
			0 & 0 & \frac{ -1}{ Q} Id_{n-1} \\
		\end{array} \right) 
	\end{equation*}
	at points $\mathbf x = ( x^1, 0, ... , 0)$.
	Hence,
	\begin{equation*} \begin{aligned}
		| A |^2
		&= tr ( A^i_k A^k_j ) \\
		&= tr
		\left( 		\frac{ 1 }{  1 + Q'^2 }
		\left( \begin{array}{ccc}
			\left( \frac{ Q''}{ 1 + Q'^2} \right)^2 & 0 & 0 \\
			0 & \frac{ Q'^2}{ | \mathbf x |^2 } Id_{n-1} & 0 \\
			0 & 0 & \frac{ 1}{ Q^2} Id_{n-1} \\
		\end{array} \right) \right) \\
		&= \frac{ 1}{ 1 + Q'^2} 
		\left[ \left(\frac{ Q''}{ 1 + Q'^2} \right)^2 + (n-1) \frac{ Q'^2}{ | \mathbf x |^2} + \frac{ n-1}{ Q^2} \right]
	\end{aligned} \end{equation*}
	By $O(n) \times O(n)$-invariance of $\Sigma$, this formula holds throughout $\Sigma$.
\end{proof}

\begin{remark} \label{A^2Cone}
	On the Simons cone $Q = | \mathbf x |$, proposition \ref{2ndFundlFormNorm} implies
		$$| A|^2 = \frac{ n-1}{ | \mathbf x |^2} $$
\end{remark}

\begin{prop}
	The Laplace-Beltrami operator $\Delta_\Sigma$ for an $O(n) \times O(n)$-invariant hypersurface
	$( \Sigma, g_\Sigma)$
	acting on $u = u( | \mathbf x|)$ is given by
		$$\Delta_\Sigma u = \frac{ 1}{1 + Q'^2} \left[ u'' + \frac{ n-1}{ | \mathbf x |} u' - \frac{  Q' Q''}{ 1 + Q'^2} u' +  ( n-1) \frac{ Q'}{ Q} u'\right]$$
\end{prop}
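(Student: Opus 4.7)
The plan is to apply the standard divergence-form expression for the Laplace--Beltrami operator,
$$\Delta_\Sigma u = \frac{1}{\sqrt{\det g_\Sigma}} \partial_i\left( \sqrt{\det g_\Sigma} \, g_\Sigma^{ij} \partial_j u \right),$$
in the polar-coordinate frame $(r,\omega,\theta) \in (0,\infty)\times \mathbb{S}^{n-1} \times \mathbb{S}^{n-1}$ for which Proposition \ref{RiemMetric} gives the block-diagonal form of $g_\Sigma$. Since $u=u(r)$ depends only on the radial variable, all angular derivatives of $u$ vanish and only the $r$-component of the divergence survives.

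First, I would read off from Proposition \ref{RiemMetric} that
$$\sqrt{\det g_\Sigma} = \sqrt{1+Q'^2}\; r^{n-1} Q^{n-1} \sqrt{\det g_{\mathbb{S}^{n-1}}(\omega)}\sqrt{\det g_{\mathbb{S}^{n-1}}(\theta)},$$
and $g_\Sigma^{rr} = (1+Q'^2)^{-1}$. The factors depending only on $\omega,\theta$ cancel between the outer $1/\sqrt{\det g_\Sigma}$ and the derivative, which therefore reduces to
$$\Delta_\Sigma u = \frac{1}{\sqrt{1+Q'^2}\, r^{n-1} Q^{n-1}} \, \partial_r\!\left( \frac{r^{n-1} Q^{n-1}}{\sqrt{1+Q'^2}}\, u'\right).$$

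Next, I would carry out this single-variable differentiation by the product/quotient rule, getting four contributions: one from $\partial_r r^{n-1}$ giving the $\frac{n-1}{r}u'$ term, one from $\partial_r Q^{n-1}$ giving the $(n-1)\frac{Q'}{Q}u'$ term, one from $\partial_r(1+Q'^2)^{-1/2}$ giving the $-\frac{Q'Q''}{1+Q'^2}u'$ term, and one from $\partial_r u'$ giving the $u''$ term. Dividing by the prefactor and collecting gives the overall factor of $(1+Q'^2)^{-1}$ in front of all four terms, which is precisely the claimed formula.

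There is no serious obstacle: the only place to be careful is matching the cancellation of the spherical volume factors and keeping track of the sign from differentiating $(1+Q'^2)^{-1/2}$. All other steps are routine. The same computation can alternatively be derived from the Koszul formula or from writing $\Sigma$ as a warped product over the $r$-line with fibers $\mathbb{S}^{n-1}_\omega \times \mathbb{S}^{n-1}_\theta$ (with warping factors $r$ and $Q(r)$ respectively and base metric $(1+Q'^2)dr^2$), but the direct divergence-form computation above is the most transparent.
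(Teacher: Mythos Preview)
Your proposal is correct and uses the same divergence-form expression for $\Delta_\Sigma$ as the paper. The only difference is that you work in the polar-coordinate frame $(r,\omega,\theta)$ where the metric is already block-diagonal, whereas the paper computes in Cartesian coordinates $\mathbf x$ at a point of the form $(x^1,0,\dots,0)$ and assembles the terms $\sum_i \partial_{x^i}(x^i/|\mathbf x|)$, $\sum_i x^i\partial_{x^i}(1+Q'^2)^{-1}$, and $\tfrac12\operatorname{tr}(g^{-1}\partial_{x^i}g)$ separately; your single-variable product rule on $r^{n-1}Q^{n-1}(1+Q'^2)^{-1/2}u'$ is the more economical execution of the same idea.
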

\begin{proof}
	Recall that
		$$\Delta_\Sigma u = \frac{1}{ \sqrt{ det \, g}} \partial_i \left( \sqrt{det \, g} \, g^{ij} \partial_j u \right)$$
	Note that at $\mathbf x = (x^1, 0, ... , 0)$
	\begin{equation*} \begin{aligned}
		g^{-1} \partial u
		= 
		\left( \begin{array}{ccc} 
			\frac{1}{  1 + Q'(| \mathbf x |)^2 } & 0 & 0 \\
			0 & Id_{n-1} & 0 \\
			0 & 0 & \frac{1}{Q( | \mathbf x |)^2 } g_{\mathbb{S}}^{-1} \\
		\end{array} \right)
		\left( \begin{array}{c}
			u'( |\mathbf x | ) \frac{ x^1}{ | \mathbf x |} \\
			0 \\
			0 \\
		\end{array} \right) 
		= 
		\frac{ u'(| \mathbf x |) }{1 + Q'^2} \frac{ x^1}{ | \mathbf x |}
	\end{aligned} \end{equation*}
	By symmetry it follows that
	\begin{equation*} \begin{aligned}
		\Delta_\Sigma u 
		=& \frac{ 1}{ \sqrt{ det g} } \sum_{i = 1}^{n} \frac{ \partial }{ \partial x^i} 
		\left( \sqrt{ det g } \frac{ u'(| \mathbf x |) }{1 + Q'^2} \frac{ x^i}{ | \mathbf x |} \right) \\
		=& \frac{ u''}{1 + Q'^2} 
		+ \frac{ u'}{1 + Q'^2} \sum_i \frac{ \partial }{ \partial x^i} \left( \frac{ x^i}{| \mathbf x |} \right) \\
		&+ \frac{ u'}{ | \mathbf x |} \sum_i x^i \frac{ \partial }{ \partial x^i} \left( \frac{1}{ 1 + Q'^2} \right) 
		+ \frac{ u'}{1 + Q'^2} \frac{ 1}{ |\mathbf x |} \sum_i \frac{ x^i }{ \sqrt{ det g } } \frac{ \partial }{ \partial x^i} \left( \sqrt{ det g } \right) 
	\end{aligned} \end{equation*}

	We compute each of these terms.
	\begin{equation*} \begin{aligned}
		\sum_i \frac{ \partial }{ \partial x^i } \left( \frac{ x^i} {| \mathbf x |} \right)  
		= \sum_i \left( \frac{ 1}{ | \mathbf x |} - \frac{ x^i }{ | \mathbf x |^2} \frac{ x^i }{ |\mathbf x |} \right) 
		= \frac{ n }{| \mathbf x | } - \frac{ | \mathbf x |^2}{ |\mathbf x |^3} 
		= \frac{ n-1}{ | \mathbf x |}
	\end{aligned} \end{equation*}
	
	\begin{equation*} \begin{aligned}
		\sum_i x^i \frac{ \partial }{ \partial x^i} \left( \frac{ 1 }{1 + Q'^2} \right) 
		= \sum_i x^i \left( \frac{ -2 Q' Q''}{ (1 + Q'^2 )^2} \frac{ x^i}{ | \mathbf x |} \right) 
		= \frac{ -2 Q' Q''}{ (1 + Q'^2 )^2} | \mathbf x |
	\end{aligned} \end{equation*}
	
	At a point of the form $\mathbf x  = (x^1, 0, ...., 0)$
	\begin{equation*} \begin{aligned}
		&\sum_i \frac{ x^i}{ \sqrt{ det g}} \frac{ \partial}{ \partial x^i} \sqrt{ det g}	\\
		=& \sum_i \frac{ x^i }{ \sqrt{ det g} } \frac{ 1}{2 \sqrt{ det g} }  ( det g) tr( g^{-1} \partial_{x^i} g ) \\
		=& \sum_i \frac{1}{2} x^i tr ( g^{-1} \partial_{x^i} g ) \\
		=& \sum_i \frac{ 1}{2} x^i
		tr \left[ 
		\left( \begin{array}{ccc} 
			\frac{1}{ 1 + Q'^2} & 0 & 0 \\
			0& Id_{n-1} & 0 \\
			0 & 0 & Q^{-2} g_{\mathbb{S}^{n-1} } \\
		\end{array} \right)
		\frac{ \partial }{ \partial x^i}
		\left( \begin{array}{ccc} 
			1 + Q'^2 & 0 & 0 \\
			0 & Id & 0 \\
			0 & 0 & Q^2 g_{\mathbb{S}} \\
		\end{array} \right)
		\right] \\
		=& \sum_i \frac{ 1}{2} x^i
		tr \left[ 
		\left( \begin{array}{ccc} 
			\frac{1}{ 1 + Q'^2} & 0 & 0 \\
			0& Id_{n-1} & 0 \\
			0 & 0 & Q^{-2} g_{\mathbb{S}^{n-1} } \\
		\end{array} \right)
		\left( \begin{array}{ccc} 
			2 Q' Q'' \frac{ x^i}{ | \mathbf x |} & 0 & 0 \\
			0 & 0 & 0 \\
			0 & 0 & 2 Q Q' \frac{ x^i}{ |\mathbf x |} g_{\mathbb{S}} \\
		\end{array} \right)
		\right] \\
		=& \sum_i \frac{ 1}{2} x^i
		\left[ \frac{ 2 Q' Q'' }{1 + Q'^2} \frac{ x^i}{ | \mathbf x |} + (n-1) \frac{2 Q'}{ Q} \frac{ x^i }{ |\mathbf x |} \right] \\
		=&  \left( \frac{ Q' Q''}{ 1 + Q'^2} + (n-1) \frac{ Q'}{Q} \right) | \mathbf x |
	\end{aligned} \end{equation*}

	It follows that
	\begin{equation*} \begin{aligned}
		\Delta_\Sigma u 
		&= \frac{ u''}{ 1 + Q'^2} 
		+ \frac{ u'}{ 1 + Q'^2} \frac{ n-1}{ | \mathbf x |}
		+ \frac{ -2 Q' Q''}{ ( 1 + Q'^2)^2} u'
		+  \left( \frac{ Q' Q''}{ 1 + Q'^2} + (n-1) \frac{ Q'}{Q} \right) \frac{ u'}{ 1 + Q'^2} \\
		&= \frac{ 1}{1 + Q'^2} \left[ u'' + \frac{ n-1}{ | \mathbf x |} u' - \frac{  Q' Q''}{ 1 + Q'^2} u' +  ( n-1) \frac{ Q'}{ Q} u'\right]
	\end{aligned} \end{equation*}

\end{proof}

\begin{cor} \label{LaplaceBeltramiMinl}
	Let $( \Sigma, g_\Sigma)$ be an $O(n) \times O(n)$-invariant hypersurface
	and $u = u( | \mathbf x | )$ as above.
	If $\Sigma$ is minimal then
		$$\Delta_\Sigma u = \frac{ u''}{ 1 + Q'^2} + \frac{ n-1}{ | \mathbf x |} u'
		= \Delta_{\mathbb{E}^{2n} } u - \nabla^2 u( \nu_\Sigma, \nu_\Sigma) $$
\end{cor}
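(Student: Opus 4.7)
The corollary asserts two identities. The plan is to derive the first by substituting the minimal surface equation into the formula for $\Delta_\Sigma u$ from the preceding proposition, and to derive the second by direct computation of $\Delta_{\mathbb{E}^{2n}} u$ and $\nabla^2 u(\nu_\Sigma, \nu_\Sigma)$ for $u = u(|\mathbf{x}|)$.

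For the first equality, I would begin with the expression
$$\Delta_\Sigma u = \frac{1}{1+Q'^2}\left[u'' + \frac{n-1}{|\mathbf{x}|}u' - \frac{Q'Q''}{1+Q'^2} u' + (n-1)\frac{Q'}{Q} u'\right]$$
furnished by the preceding proposition and invoke Proposition \ref{meanCurv}, which identifies $H = 0$ with
$$\frac{Q''}{1+Q'^2} = \frac{n-1}{Q} - \frac{(n-1)Q'}{|\mathbf{x}|}.$$
Substituting this into the combination $-\frac{Q'Q''}{1+Q'^2} + (n-1)\frac{Q'}{Q}$ should produce $(n-1)\frac{Q'^2}{|\mathbf{x}|}$ after the two $\frac{Q'}{Q}$ contributions cancel. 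The bracket then collapses to $u'' + \frac{n-1}{|\mathbf{x}|}(1 + Q'^2)u'$, and the prefactor $\frac{1}{1+Q'^2}$ yields the claimed form $\frac{u''}{1+Q'^2} + \frac{n-1}{|\mathbf{x}|} u'$.

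For the second equality, note that $u = u(|\mathbf{x}|)$ depends only on the first $n$ Euclidean coordinates, so viewing $u$ as a radially symmetric function on $\mathbb{R}^n$ gives $\Delta_{\mathbb{E}^{2n}} u = u'' + \frac{n-1}{|\mathbf{x}|} u'$. For the Hessian term, I would use the explicit unit normal $\nu_\Sigma = \frac{1}{\sqrt{1+Q'^2}}(-Q' \frac{\mathbf{x}}{|\mathbf{x}|}, \theta)$ from Proposition \ref{unitNormal}. Because $u$ is independent of the $\mathbf{y}$-coordinates, only the $\mathbf{x}$-component of $\nu_\Sigma$ contributes to $\nabla^2 u(\nu_\Sigma, \nu_\Sigma)$, and because $u$ is radial in $\mathbf{x}$, the Hessian applied twice to the unit radial direction $\frac{\mathbf{x}}{|\mathbf{x}|}$ returns $u''$. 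Hence
$$\nabla^2 u(\nu_\Sigma, \nu_\Sigma) = \frac{Q'^2}{1+Q'^2} u''(|\mathbf{x}|),$$
and subtracting from $\Delta_{\mathbb{E}^{2n}} u$ recovers $\frac{u''}{1+Q'^2} + \frac{n-1}{|\mathbf{x}|} u'$.

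I do not expect any substantive obstacle; the argument is essentially algebraic. Alternatively, one could bypass the direct Hessian calculation by appealing to the standard identity $\Delta_\Sigma u = \Delta_{\mathbb{E}^{N}} u - \nabla^2 u(\nu, \nu) + H(\nabla u \cdot \nu)$, valid for any ambient function $u$ on a hypersurface $\Sigma \subset \mathbb{R}^N$, in which case the second equality is immediate from $H = 0$; but the direct computation stays in the concrete style of the preceding propositions and provides an independent verification of the first equality.
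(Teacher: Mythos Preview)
Your proposal is correct and matches the paper's proof of the first equality exactly: both substitute the minimal surface equation into the preceding proposition's formula to collapse the bracket to $u'' + \frac{n-1}{|\mathbf x|}(1+Q'^2)u'$. For the second equality, the paper simply invokes the general identity $\Delta_\Sigma u = \Delta_{\mathbb{E}^{2n}} u - \nabla^2 u(\nu_\Sigma,\nu_\Sigma)$ for minimal hypersurfaces (precisely the alternative you mention at the end) and does not carry out the explicit Hessian computation; your direct calculation of $\nabla^2 u(\nu_\Sigma,\nu_\Sigma) = \frac{Q'^2}{1+Q'^2}u''$ is a correct and self-contained verification that the paper omits.
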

\begin{proof}	
	For minimal hypersurfaces in $\mathbb{E}^{2n}$, it is a general fact that
		$$\Delta_\Sigma u = \Delta_{\mathbb{E}^{2n} } u - \nabla^2 u ( \nu_\Sigma, \nu_\Sigma)$$
	Thus, it suffices to prove the first equality.

	If $\Sigma$ is minimal, then $Q$ satisfies
	\begin{equation*} \begin{aligned}
		& &	&\frac{ Q''}{ 1 + Q'^2} + \frac{ (n-1)}{| \mathbf x |} Q' - \frac{ n-1}{Q} = 0	\\
		&\implies& &\frac{ n-1}{ | \mathbf x |} Q' = - \frac{ Q''}{1 + Q'^2} + \frac{ n-1}{Q} 	\\
		&\implies& &\frac{ n-1}{ | \mathbf x |} Q'^2 u' = - \frac{ Q'' Q'}{1 + Q'^2} u' + (n-1)\frac{ Q'}{Q}u' 
	\end{aligned} \end{equation*}
	It then follows that
	\begin{equation*} \begin{aligned}
		\Delta_\Sigma u
		&= \frac{ 1}{1 + Q'^2}	\left[ u'' + \frac{ n-1}{ | \mathbf x |} u' - \frac{  Q' Q''}{ 1 + Q'^2} u' +  ( n-1) \frac{ Q'}{ Q} u'\right] \\
		&= \frac{1}{ 1 + Q'^2} \left[ u'' + \frac{ n-1}{ | \mathbf x |} u' + \frac{ n-1}{| \mathbf x |} Q'^2 u' \right] \\
		&= \frac{ u''}{1 + Q'^2} + \frac{ n-1}{ | \mathbf x |} u'
	\end{aligned} \end{equation*}
	This proves the first equality.
\end{proof}

\section{Constants} \label{appendixConstants}

\begin{itemize}

	\item $n \ge 4$ encodes the dimension
	
	\item The parameter $\alpha = \alpha(n)$ depends only dimension and is given by
		$$\alpha = \alpha(n) = \frac{ 1}{2} \left( -(2n-3) + \sqrt{ (2n-3)^2 - 8(n-1)} \right) < 0$$
	$|\alpha(n)|$ is a decreasing function of $n$ for $n \ge 4$.
	Moreover, 
		$$| \alpha(4) | = 2\qquad | \alpha(5) |< \frac{3}{2} \qquad \text{ and }  |\alpha(n)| \searrow 1  \text{ as } n \nearrow \infty$$
		
	\item $\lambda = \lambda_k$ is the eigenvalue given by \comment{changed Velazquez's $l$ to a $k$}
		$$\lambda_k = \frac{\alpha-1}{2} + k		\qquad (k \in \mathbb{N})$$
	We only consider the large enough $k$ for which $\lambda_k > 0$, or, equivalently, we only consider $k \ge 2$.
	
	Moreover, for $n \ge 4$ and $k \ge 2$, $2 \lambda_k -1 \ge 0$.
	Indeed,
	\begin{gather} 
		2 \lambda_k -1
		\ge 2 \lambda_2 -1	
		= 4 - 1 + \alpha -1	
		= 2 - | \alpha | 		
		\ge 0
	\end{gather}
	
	\item $\sigma_k$ encodes the blow-up rate of the second fundamental form
		$$\sigma_k = \frac{\lambda_k}{1 + | \alpha | } > 0$$
\end{itemize}


\bibliography{bddHv2Bib}{}
\bibliographystyle{alpha}
\nocite{*}


\end{document}